\journalname{Arxiv}
\tikzset{elegant/.style={smooth,thick,samples=50,black}}
\tikzset{eaxis/.style={->,>=stealth}}
\newtheorem{coro}{Corollary}[section]
\newtheorem{lem}{Lemma}[section]
\newtheorem{thm}{Theorem}[section]
\newtheorem{assum}{Assumption}
\newcommand{\proj}[0]{ {\bf proj}}
\newcommand{\conv}[1]{{\bf conv}\left( {#1} \right)}
\newcommand{\R}{\,{\mathbb R}}
\newcommand{\argmin}[0]{ {\mathop{{\rm  argmin}}\,}}
\newcommand{\reff}[1]{$\rm \color{blue}(\ref{#1} )$}
\newcommand{\ssnm}[1]
{
	\left\vert\kern-0.25ex
	\left\vert\kern-0.25ex
	\left\vert
	{#1}
	\right\vert\kern-0.25ex
	\right\vert\kern-0.25ex
	\right\vert
}
\crefname{equation}{}{}
\crefname{lem}{Lemma}{Lemmas}
\crefname{thm}{Theorem}{Theorems}
\crefname{assum}{Assumption}{Assumptions}
\def\spher@harm#1{%
	\vbox{\hbox{%
			\offinterlineskip
			\valign{&\hb@xt@2\p@{\hss$##$\hss}\vskip.2ex\cr#1\crcr}%
		}\vskip-.36ex}%
}
\def\gshone{\spher@harm{.}}
\def\gshtwo{\spher@harm{.&.}}
\def\gshthree{\spher@harm{.&.&.}}
\let\gsh\spher@harm
\newcolumntype{I}{!{\vrule width 1,5pt}}
\newlength\savedwidth
\newlength\savewidth
\newcounter{mnote}
\let\oldmarginpar\marginpar
\renewcommand\marginpar[1]
\begin{document}

\title{Multiobjective Accelerated Gradient-like Flow with Asymptotic Vanishing Normalized  Gradient}

%\subtitle{Using  the  LaTex Template}

\author{Yingdong Yin}% \and  Liping Tang}

\institute{Yingdong Yin\at
National Center for Applied Mathematics in Chongqing, Chongqing Normal University, Chongqing, 401331, China\\
             yydyyds@sina.com
}

\date{Received: date / Accepted: date}
%The correct dates will be entered by the editor.

\maketitle

\begin{abstract}
{ For unconstrained convex smooth multiobjective optimization, we propose a novel gradient-like flow that incorporates the asymptotic vanishing normalized gradient. In the scalar case, this flow reduces to a first-order method with strong empirical performance, as introduced by Wang et al. [SIAM J. Sci. Comput., 2021].} We prove the existence of a trajectory solution and, using Lyapunov analysis, establish convergence rates of $O(1/t^2)$ and $O(\ln^2 t / t^2)$ under two distinct parameter choices. Under certain assumptions, we further show that the trajectory converges to a weakly Pareto optimal solution. By discretizing the flow, we derive a new multiobjective accelerated gradient method that achieves a convergence rate of $O(\ln^2 k / k^2)$. { Numerical experiments demonstrate that both our continuous flow and discrete algorithm lead to faster convergence on most problems.}
\end{abstract}
\keywords{Multiobjective optimization\and  Gradient-like flow\and Accelerated gradient method \and  Direction correction \and  Lyapunov analysis}
\subclass{90C29 \and  90C30 \and 90C25\and 34E10}

\section{Introduction}
This paper considers $\mathbb R^n$, an $n$-dimensional Euclidean space with inner product $\langle \cdot, \cdot \rangle$ and induced norm $\| \cdot \|$. We study the unconstrained multiobjective optimization problem \reff{eq:MOP}:
\begin{equation}\label{eq:MOP}
\min_{x \in \mathbb{R}^n} F(x) := \begin{pmatrix} f_1(x) , \cdots  f_m(x) \end{pmatrix}^\top, \tag{MOP}
\end{equation}
where each $f_i: \mathbb R^n \to \mathbb R$ ($i=1,\ldots,m$) is convex and continuously differentiable. Unlike in single-objective optimization, \reff{eq:MOP} involves multiple, typically conflicting objectives. This necessitates different optimality concepts and presents challenges for both theoretical analysis and numerical methods.

{ For this reason, accelerated first-order methods and gradient flows have been extensively studied in single-objective optimization \cite{attouch2022damped,attouch2018fast,attouch2019fast,attouch2024convex,luo2023accelerated,luo2022differential,su2016differential}, while research in multiobjective optimization remains limited. In recent years, Sonntag and Peitz have conducted pioneering work in this area \cite{sonntag2024fastgradientflow,sonntag2024fastNestrovAlgorithm}. Below we mainly review their recent work and elaborate on our motivations.}

 \subsection{Accelerated gradient methods and flows for multiobjective optimization}
 %In this subsection, we briefly introduce some recent work on accelerated gradient methods and flows. 
 The classical approach to solving the \cref{eq:MOP} problem is scalarization, which often requires properly chosen weights and is not easily capable of approximating the Pareto front. To address this issue, Filge et al. \cite{fliege2000steepest} proposed the multiobjective steepest descent method, where the descent direction is defined as  
 \begin{equation}\label{eq:direction}
 d(x) = \underset{d \in \mathbb{R}^n}{\argmin} \left\{ \frac{1}{2} \|d\|^2 + \max_{i=1,\cdots,m} \Big\langle \nabla f_j(x), d \Big\rangle \right\}.
 \end{equation}
 Moreover, some studies \cite{attouch2015multiibjective}  showed that the steepest descent direction in \cref{eq:direction} is  the projection of the zero vector onto the convex hull \( C(x) = \textbf{conv}\{\nabla f_j(x) : j= 1,\ldots,   m\} \), i.e., \( d(x) = -\textbf{proj}_{C(x)}(0) \). This leads to the following multiobjective gradient flow
 \begin{equation}\label{eq:MOG}
 \dot{x}(t) + \proj_{C(x(t))}(0) = 0, \tag{MCSD}
 \end{equation}
 called the \textit{Multiobjective Continuous Steepest Descent} as shown in \cite{Attouch2014}. { This dynamic system is also related to the proximal gradient method \cite{tanabe2019proximal}.} %Many multiobjective gradient methods are related to \cref{eq:MOG}, such as the steepest descent method \cite{fliege2000steepest} and the proximal gradient method \cite{tanabe2019proximal}. 
 
 In \cite{attouch2015multiibjective}, Attouch et al. introduced the \textit{Inertial Multiobjective Gradient System}, we can write it as follows 
 \begin{equation}\label{eq:IMOG}
 \ddot{x}(t) + \alpha(t) \dot{x}(t) + \proj_{C(x(t))}(0) = 0,\tag{IMOG}
 \end{equation}
 which is the continuous time limit of the heavy-ball method if $\alpha (t)=\alpha $ is a constant.  %\cite{attouch2015multiibjective}.
 However, the convergence of \cref{eq:IMOG} requires that the damping coefficient has a positive lower bound to ensure, which is not {satisfied} if $\alpha(t) = \frac{\alpha}{t}$.
 Therefore, associating \cref{eq:IMOG} with multiobjective accelerated method, such as the  proximal gradient case \cite{tanabe2023accelerated}, is challenging for convergence analysis.
 
 Recently, to address above issue, Sonntag and Peitz  \cite{sonntag2024fastgradientflow,sonntag2024fastNestrovAlgorithm} introduced the \textit{Multiobjective Inertial Gradient-like Dynamical System with Asymptotic Vanishing Damping} as follows
 \begin{equation}\label{eq:MAVD}
 \ddot{x}(t) + \frac{\alpha}{t} \dot{x}(t) + \proj_{C(x(t))}(-\ddot{x}(t)) = 0, \tag{$\text{MAVD}$}
 \end{equation}
 which has an equivalent presentation:
 \begin{equation*}
 \frac{\alpha }{t}\dot x(t)+\proj_{C(x(t))+\ddot x(t)}(0)=0.
 \end{equation*}
 Furthermore, discretizing \cref{eq:MAVD} yields the multiobjective accelerated gradient method with iterations
 \begin{equation}\label{eq:AccG}
 \left\{ \begin{aligned}
 y_k &= x_k + \frac{k-1}{k+2} (x_k - x_{k-1}), \\
 x_{k+1} &= y_k - s \sum_{i=1}^m \theta_{i,k} \nabla f_i(y_k).
 \end{aligned} \right. \tag{AccG}
 \end{equation}
 where \(\theta_k = (\theta_{i,k})_{i=1}^m \in \argmin_{\theta \in \Delta^m} \Big\| s \Big( \sum_{i=1}^m \theta_i \nabla f_i(y_k) \Big) - \frac{k-1}{k+2} (x_k - x_{k-1}) \Big\|^2\) and  $s$ is a constant stepsize. It is worth noting that \cref{eq:AccG}, as a generalization of Nesterov's method in multiobjective optimization for the convex case, and it differs from the accelerated proximal gradient method established by Tanabe et al.  \cite{tanabe2023accelerated} in terms of distinct discretization schemes of \cref{eq:MAVD} \cite{sonntag2024fastNestrovAlgorithm}. In fact, research on multiobjective accelerated gradient methods remains insufficient, while discretization approaches can derive diverse algorithms. 
 
 {  The main challenges lie in the theoretical analysis, as detailed below:
     \begin{itemize}
     \item[$\rm (i)$] The right-hand side function of multiobjective gradient flows is often not Lipschitz continuous. Therefore, the existence proof of its trajectory solutions cannot directly apply the Cauchy-Lipschitz theorem, which increases the analytical complexity.
     
     \item[$\rm(ii)$] In Lyapunov analysis, single-objective gradient flows can utilize the property that the objective function value is no less than the minimum value to deduce the monotonicity of the Lyapunov function. However, this cannot be achieved for multiobjective gradient flows.
     \end{itemize}
     To address $\rm (i)$, Sonntag and Peitz \cite{sonntag2024fastgradientflow} employed the existence theorem for differential inclusion solutions. By constructing a differential inclusion problem equivalent to the original Cauchy problem, they completed the existence proof for trajectory solutions. To address $\rm (ii)$, constructing a Lyapunov function that does not rely on the minimum value property is key to studying multiobjective gradient flows. This requires generalizing specific Lyapunov functions from single-objective optimization, with primary references being \cite{attouch2018fast,attouch2019fast}.} 
 \subsection{Fast inertial seaech direction correction algorithm}
 { In \cite{wang2021search}, Wang et al. proposed a novel first-order algorithm called the \textit{Fast Inertial Search Direction Correction algorithm} \cref{eq:FISC} by discretizing a second-order ODE. The search direction at the current iterate is defined as a linear combination of the current gradient, the normalized gradient direction, and the past search direction. The specific iterative scheme is given by:}
 \begin{equation}\label{eq:FISC}
 \left\{\begin{aligned}
 d_{k+1}&=\frac{\ell _k-1}{\ell _{k}+\alpha -1}d_k-\frac{\alpha -3}{\ell_k +\alpha -1}\frac{\|d_k\|}{\|\nabla f(x_k)\|}\nabla f(x_k)-\nabla f(x_k),\\
 x_{k+1}&=x_k+s_kd_k.
 \end{aligned}\right. \tag{FISC}
 \end{equation}
 where $\{\ell_k\}$ is a sequence greater than  zero, which can be chosen as $\ell _k=k$. The theoretical interpretation of (\ref{eq:FISC}) remains an open problem, but the authors provided an algorithm (\ref{eq:FISCnes}) with Nesterov's acceleration characteristics. When $x_k$ satisfies the given restart condition for any $k$, it has the following iterative scheme:
 \begin{equation}\label{eq:FISCnes}
 \left\{\begin{aligned}
 y_k &=x_k+\frac{\ell_k -1}{\ell_k+\alpha -1}(x_{k}-x_{k-1})-\frac{\alpha -3}{\ell _k+\alpha -1}\frac{\|x_k-x_{k-1}\|}{\|\nabla f(x_k)\|}\nabla f(x_k),\\
 x_{k+1}&=y_k-s_k\nabla f(y_k).
 \end{aligned}\right.\tag{FISC-nes}
 \end{equation}
 We believe this iteration format can be visually interpreted with the \cref{fig:optimization_process}, which was not present in the original text. 

\begin{figure}[H]
	\centering
 \begin{tikzpicture}[
 node distance=1.5cm and 2cm,
 >=Stealth,
 font=\small
 ]
 % 定义节点
 \node (xk2) {$x_{k-1}$};
 \node[right=1.5 cm of xk2] (xk1) {$x_{k}$};
 \node[below right=0.5cm and 0.3cm of xk1] (ybar) {$x_{k}-\frac{\alpha -3}{\ell_k+\alpha -1}\frac{\|x_k-x_{k-1}\|}{\|\nabla f(x_k)\|}\nabla f(x_k)$};
 \node[right=of ybar] (yk) {$y_{k}$};
 \node[above right= 3cm and 0.8cm of yk] (xk) {$x_{k+1}$};
 
 % 绘制箭头及标注
 \draw[->] (xk2) -- (xk1);
 \draw[->, dashed] (xk1) -- (ybar);
 \draw[->, dashed] (ybar) -- (yk);
 \draw[->, dashed] (yk) -- (xk);
 \draw[->] (xk1) -- (xk);
 \end{tikzpicture}
 	\caption{FISC-nes iteration diagram}
 \label{fig:optimization_process}
\end{figure}

    Compared to the FISTA algorithm, \cref{eq:FISCnes} introduces additional steepest descent information in the selection of the auxiliary variable $y_k$. Similar to traditional acceleration algorithms, both \cref{eq:FISC} and \cref{eq:FISCnes} can be viewed as discretized forms of the following equation, which we call the \textit{Accelerated Gradient Flow with Asymptotic Vanishing Normalized Gradients}: 
\begin{equation}\label{eq:AVNG}
\left\{\begin{aligned}
&r(t,x(t),\dot x(t))=\frac{\alpha -3}{t}\frac{\|\dot x(t)\|}{\|\nabla f(x(t))\|}\nabla f(x(t)),\\
&\ddot x(t)+ \frac{\alpha }{t}\dot x(t)+r(t,x(t),\dot x(t))+\nabla f(x(t))=0.
\end{aligned}\right.\tag{AVNG}
\end{equation}
It should be noted that this equation corresponds to a special choice of parameters $\beta(t)$ and $\gamma(t)$ in the following general equation:
\begin{equation}\label{eq:SDC-ODE}
\ddot x(t)+\beta (t)\dot x(t)+\gamma(t)\frac{\|\dot x(t)\|}{\|\nabla f(x(t))\|}\nabla f(x(t))=0. \tag{SDC-ODE}
\end{equation}
Selecting different parameters for \cref{eq:SDC-ODE} holds significant research potential for deriving new algorithms, though this is not the focus of this paper.
\subsection{Motivation}
{  For FISTA and \cref{eq:AccG}, the momentum term \(\frac{k-1}{k+2}(x_{k}-x_{k-1})\) is a key factor influencing the acceleration effect. During the early stages of iteration, it is often desirable for the momentum term to be larger, as the current position is still relatively far from the optimal solution. In the later stages of iteration, it is preferable for the momentum term to be smaller to mitigate the effects of oscillations.
    
    {\it So how can we change the momentum term to achieve faster convergence in the early stages of the algorithm’s iteration and reduce oscillations in the later stages?}
    
    In fact, the iterative scheme \cref{eq:FISCnes} designed by Wang et al. \cite{wang2021search} can roughly be characterized as follows: when \(x_k - x_{k-1}\) is closer to a descent direction, the iteration takes a larger step forward; when \(x_k - x_{k-1}\) is farther from a descent direction, the iteration takes a smaller step forward.
    
    Inspired by  \cref{eq:FISCnes}, {\it we aim to develop a new class of accelerated gradient flows based on \cref{eq:MAVD} and \cref{eq:AVNG}, and through discretization, obtain a new and efficient first-order accelerated method.} The method is designed for multiobjective optimization problems and is expected to exhibit strong advantages in terms of fast convergence and reduced oscillation.}

\subsection{Contributions}
{ The main contributions of this paper are as follows:}
\begin{itemize}
    \item[$\bullet$]   Firstly, we present the \textit{Multiobjective Accelerated Gradient-like Flow with Asymptotic Vanishing Normalized  Gradient}:
    \begin{equation}\label{eq:MAVNG}
    \left\{\begin{aligned}
    &r(t,x(t),\dot x(t)) =  \frac{\alpha -\beta}{t^p}\frac{\|\dot x(t)\|}{\|\proj_{C(x(t))}(0)\|}\proj_{C(x(t))}(0),\\
    &\frac{\alpha }{t}\dot x(t)+\proj_{C(x(t))+r(t,x(t),\dot x(t))+\ddot x(t)}(0)=0.
    \end{aligned}\right. \tag{MAVNG}
    \end{equation} 
    where \(p\ge 1\) and $\alpha \ge \beta \ge 3$. Note that \cref{eq:MAVNG} is a multiobjective extension obtained by uniting \cref{eq:AVNG} with \cref{eq:MAVD}; we adopt this model for the following reasons:
    { \begin{itemize}
    \item[$\rm (i)$] When \cref{eq:AVNG} is discretized into \cref{eq:FISCnes}, its normalized gradient term corresponds to the steepest-descent direction $-\nabla f(x_k)$ of the auxiliary variable at the current iterate $x_k$. Consequently, for the first part of the above expression we employ the normalized multiobjective steepest-descent direction $-\proj_{C(x_k)}(0)/\|\proj_{C(x_k)}(0)\|$.
    
    \item[$\rm(ii)$] The second part is designed by incorporating \cref{eq:MAVD}; this treatment makes a convergence analysis feasible.
    \end{itemize}}
    \item[$\bullet$] Moreover, we analysis the dynamical system \cref{eq:MAVNG} in $\mathbb{R}^n$, proving the existence of its trajectory solutions by constructing an equivalent differential inclusion. 
    	\item[$\bullet$] Using Lyapunov analysis, we derive convergence results for \cref{eq:MAVNG} under different parameter selections, as shown in the table below:
    \begin{table}[H]
        \caption{Convergence properties under different parameter selections for \cref{eq:MAVNG}. ''$\checkmark$'' indicates that the trajectory solution converges to a weakly Pareto optimal solution. The convergence rate is characterized by  the merit function proposed by Tanabe et al. \cite{tanabe2024new}. }
        \label{tab:convergence}
        \begin{center}
            \begin{tabular}{c|c|c|c}
                \hline
                & \textbf{Parameter } & \textbf{Convergence Rate} & \textbf{Convergence} \\ \hline
                \cref{thm:propositionofmeritfunction}& $\alpha \ge \beta \ge 3$, $p > 1$ & ${O}(1/t^2)$ & -- \\  
                \cref{thm:theorem4.2} & $\alpha \ge  \beta \geq 3$, $p = 1$ & ${O}(\ln^2 t / t^2)$ & -- \\
                \cref{thm:propositionofmeritfunction,thm:WeakParetoPoint} & $\alpha > \beta > 3$, $p > 1$ & ${O}(1/t^2)$ & $\checkmark$\\
                \hline 
            \end{tabular}
        \end{center}
    \end{table}
	\item[$\bullet$] 
Furthermore, we propose an algorithm similar to the discretization scheme of \cref{eq:MAVNG}, {\it Multiobjective Fast Inertial
    Search Direction Correction}{\it Method} \cref{eq:MFISC} with the following iterative scheme:
\begin{equation}\label{eq:MFISC}
\left\{\begin{aligned}
\pi_k &=\frac{k-1}{k+\alpha -1}(x_k-x_{k-1})-\frac{\alpha -3}{k+\alpha -1}\frac{\|x_k-x_{k-1}\|}{\|\proj_{C(x_k)}(0)\|}\proj _{C(x_k)}(0),\\
y_k&=x_k+\pi_k,\\
x_{k+1}&=y_k-s\proj_{C(y_k)}(\pi_k).
\end{aligned}\right. \tag{MFISC}
\end{equation}
By constructing a discrete Lyapunov function, we establish its convergence rate of $O(\ln^2k/k^2)$ under the merit function characterization (\cref{thm:sequenceConvergence}). Furthermore, all limit points of the generated iteration are weakly Pareto optimal solutions of \cref{eq:MOP}; this result is given in \cref{thm:Convergenceosequence}.
    \item[$\bullet$]  Numerical results demonstrate that, in terms of the merit function characterization, the objective function values decrease faster along the trajectories of \cref{eq:MAVNG} than along those of \cref{eq:MAVD}. The final convergence point generated by \cref{eq:MFISC}  shows no significant difference compared to \cref{eq:AccG}, while converging faster.
\end{itemize}
\subsection{Organization}
The structure of this paper is organized as follows: In \cref{sec:pre}, we present necessary preliminary knowledge; in \cref{sec:Exist}, we establish the existence of trajectory solutions for \cref{eq:MAVNG}; in \cref{sec:AsymptoticofMACG}, we discuss the asymptotic analysis of the trajectory solutions with different parameter choices and prove their convergence to weakly Pareto optimal solutions under certain conditions; in \cref{sec:Algorithm},  we provide the convergence rate of \cref{eq:MFISC}; in \cref{sec:Numercial}, we conduct numerical experiments to validate our theoretical results.
\section{Preliminary}
\label{sec:pre}
\subsection{Notation}
In this paper, \(\mathbb{R}^d\) denotes a \(d\)-dimensional Euclidean space with the inner product \(\langle \cdot, \cdot \rangle\) and the induced norm \(\|\cdot\|\). For vectors \(a, b \in \mathbb{R}^d\), we say \(a \leq b\) if \(a_i \leq b_i\) for all \(i = 1, \ldots, d\). Similarly, the relations \(a < b\), \(a\ge b\) and \(a>b\) can be defined in the same way.	 \(\R_{+}^d:=\{x\in \mathbb R^n :x\ge 0\}\). 
%The open ball of radius \(\delta\) centered at \(x\) is denoted by \(B_{\delta}(x) := \{ y \in \mathbb{R}^d \mid \|y - x\| < \delta \}\). 
The set \(\Delta^d := \{ \theta \in \mathbb{R}^d :\theta \geq 0 \text{ and } \sum_{i=1}^d \theta_i = 1 \}\) is the positive unit simplex. For a set of vectors \(\{\eta_1, \ldots, \eta_d\} \subseteq \mathbb{R}^d\), their convex hull is defined as \(\conv{\{\eta_1, \ldots, \eta_d\}} := \{ \sum_{i=1}^d \theta_i \eta_i :\theta \in \Delta^m \}\). For a closed convex set \(C \subseteq \mathbb{R}^d\), the projection of a vector \(x\) onto \(C\) is  \(\proj_C(x) := \argmin_{y \in C} \|y - x\|^2\). 		

		\subsection{Pareto optimality and necessary condition}

For \cref{eq:MOP}, we have Pareto optimality which is defined as follows \cite{miettinen1999nonlinear}.

\begin{definition}\label{def:defofpareto}
	Consider the multiobjective optimization problem \cref{eq:MOP}.
	\begin{itemize}
		\item[$\rm (i)$] A point \( x^* \in \mathbb{R}^n \) is called a Pareto point or a Pareto optimal solution  if there has no \(y\in \mathbb R^n\) that \(F(y)\leq F(x^*)\) and $F(y)\neq F(x^*)$. The set of all Pareto points is called the Pareto set and is denoted by \( \mathcal{P} \).  The image \(F(\mathcal P)\) of the Pareto set $\mathcal P$ is the Pareto front.
		
		\item[$\rm (ii)$] A point \( x^* \in \mathbb{R}^n \) is called a weak Pareto point or weakly Pareto optimal solution if there has no \(y\in \mathbb R^n\) that \(F(y)<F(x^*)\). The set of all weakly Pareto optimal solutions is called the weak Pareto set and is denoted by \( \mathcal{P}_w \). The image \(F(\mathcal P_w)\) of the Pareto set $\mathcal P_w$ is the weak Pareto front.
	\end{itemize}  
\end{definition}

According to \cref{def:defofpareto}, it is clear that \( \mathcal{P} \subseteq \mathcal{P}_w \). %, meaning that any Pareto point is also a weakly Pareto optimal solution. %In this paper, solving \cref{eq:MOP} refers to finding a single  Pareto point rather than identifying the entire Pareto set.
Moreover, there has the following optimality conditions in \cref{eq:MOP}. 

\begin{definition}\label{def:KKTcondition}
	A point \( x^* \in \mathbb{R}^n \) is said to satisfy the Karush-Kuhn-Tucker (KKT) conditions if there exists \( \theta \in \Delta^m  \) such that
	\begin{equation}\label{eq:KKTpoint}
	\sum_{i=1}^m \theta_i \nabla f_i(x^*) = 0 .
	\end{equation}
	If \( x^* \) satisfies the KKT conditions, it is called a Pareto critical point. The set of all Pareto critical points is called the Pareto critical set and is denoted by \( \mathcal{P}_c \).
\end{definition}
\begin{remark}\label{rem:KKT=Optimal} 		
In fact, condition \cref{eq:KKTpoint} is equivalent to %\( 0 \in \conv{ \nabla f_i(x^*) :i = 1, \ldots, m } \) and 
\(\proj_{C(x^*)}(0)=0\). %0Similar to the single-objective case, 
The KKT conditions are necessary for weak Pareto optimality \cite[Proposition 2.1]{attouch2015multiibjective}. In the convex setting, the KKT conditions are also sufficient for weak Pareto optimality \cite[ Proposition 2.2]{attouch2015multiibjective}, and in this case, \( \mathcal{P} \subseteq \mathcal{P}_w = \mathcal{P}_c \).
\end{remark}
\subsection{Merit function}

		A merit function refers to a nonnegative function in \cref{eq:MOP} that attains zero only at weakly Pareto optimal solutions. Recent studies employed the merit function established by Tanabe et al. \cite{tanabe2024new} to assess the convergence rate of objective values \cite{bot2024inertial,luo2025accelerated,sonntag2024fastgradientflow,sonntag2024fastNestrovAlgorithm,tanabe2023accelerated,tanabe2023convergence,yang2024global}. In this paper, we also consider the merit function
\begin{equation}\label{eq:meritfunction} 
\varphi(x) := \sup_{z \in \mathbb{R}^n} \min_{i=1,\ldots,m} f_i(x) - f_i(z) .
\end{equation}
%This function mirrors the role of \( f(x) - f(x^*) \) in single-objective optimization, being nonnegative and indicating weak optimality, as formalized in the following theorem.
The following theorem forms the basis for our ability to use merit functions for convergence analysis.
\begin{thm}\label{thm:weakpareto} 
	Let \( \varphi:\mathbb R^n \to \R \) be defined as in \cref{eq:meritfunction} with lower semicontinous functions $f_i$ for all $i=1,\cdots,m$. Then for \cref{eq:MOP}, we have
    \begin{itemize}
		\item[\(\rm (i)\)] \( \varphi(x) \geq 0 \) for all \( x \in \mathbb{R}^n \);
		
		\item[\(\rm (ii)\)]  \( x \in \mathbb{R}^n \) is a weakly Pareto optimal solution%of \cref{eq:MOP}
        if and only if \( \varphi(x) = 0 \);
		
		\item[\(\rm (iii)\)]  \( \varphi(x) \) is lower semicontinuous.
    \end{itemize}
\end{thm}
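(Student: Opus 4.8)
The plan is to treat the three assertions separately, exploiting the sup--min structure of the definition $\varphi(x)=\sup_{z}\min_{i}\,(f_i(x)-f_i(z))$. For (i) I would simply evaluate the inner expression at the admissible choice $z=x$: since $\min_{i}\,(f_i(x)-f_i(x))=0$, the supremum over all $z$ is bounded below by $0$, giving $\varphi(x)\ge 0$ at once. No convexity or differentiability enters here; all that matters is that $x$ is itself a competitor in the supremum.

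For (ii) I would argue through the negation of weak Pareto optimality. Recall from \cref{def:defofpareto} that $x$ fails to be a weak Pareto point exactly when there is some $y$ with $f_i(y)<f_i(x)$ for every $i$, equivalently $\min_{i}\,(f_i(x)-f_i(y))>0$. If such a $y$ exists, then $\varphi(x)\ge\min_{i}\,(f_i(x)-f_i(y))>0$, so $\varphi(x)\ne 0$, which is the contrapositive of the ``only if'' direction. Conversely, if $x$ is a weak Pareto point, then for every $z$ at least one index $i$ satisfies $f_i(z)\ge f_i(x)$, so $\min_{i}\,(f_i(x)-f_i(z))\le 0$ for all $z$ and hence $\varphi(x)\le 0$; combined with (i) this forces $\varphi(x)=0$. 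Chaining the two implications yields the stated equivalence.

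For (iii) the idea is to realize $\varphi$ as a pointwise supremum of continuous functions. For each fixed $z$, the map $x\mapsto\min_{i}\,(f_i(x)-f_i(z))$ is the minimum of the finitely many continuous functions $x\mapsto f_i(x)-f_i(z)$, hence continuous; denote it $g_z$. Since $\varphi=\sup_{z}g_z$, lower semicontinuity follows from the standard fact that an arbitrary supremum of lower semicontinuous functions is lower semicontinuous. Concretely, for every $c\in\R$ the superlevel set $\{x:\varphi(x)>c\}=\bigcup_{z}\{x:g_z(x)>c\}$ is a union of open sets and therefore open, which is exactly lower semicontinuity.

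The individual steps are short, so the main obstacle is conceptual rather than computational: one must be careful that the supremum in the definition may a priori equal $+\infty$, and check that the lower-semicontinuity argument is unaffected by this (the superlevel-set characterization holds verbatim for extended-real-valued functions). A secondary point worth stating cleanly is the strictness bookkeeping in (ii)---faithfully translating $F(y)<F(x)$ into a strictly positive inner minimum---since the whole equivalence hinges on distinguishing whether that minimum is strictly positive, zero, or merely nonpositive.
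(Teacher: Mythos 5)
Your proof is correct, but note that it is genuinely more than what the paper itself provides: the paper's entire ``proof'' of \cref{thm:weakpareto} is the citation ``See \cite[Theorem 3.1, 3.2]{tanabe2024new}'', so there is no internal argument to compare against, and your write-up is a complete, self-contained substitute. All three steps are sound: (i) follows by evaluating the supremum at the admissible competitor $z=x$; (ii) correctly translates the negation of weak Pareto optimality (existence of $y$ with $F(y)<F(x)$) into strict positivity of the inner minimum, and the converse direction combines $\min_i\,(f_i(x)-f_i(z))\le 0$ for all $z$ with part (i); (iii) is the standard fact that a pointwise supremum of continuous (hence lower semicontinuous) functions is lower semicontinuous, and your open-superlevel-set argument $\{x:\varphi(x)>c\}=\bigcup_z\{x:g_z(x)>c\}$ is exactly right. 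Your closing caveat about $+\infty$ is also well placed: the supremum in \cref{eq:meritfunction} can indeed be $+\infty$ in general, so the typing $\varphi:\mathbb R^n\to\R$ in the statement is only legitimate under extra hypotheses (in this paper, the level-set boundedness of \cref{assum:levelset} makes the inner problem attain a finite value, cf.\ \cref{rem:levelsetisbounded0}), whereas your arguments for (i)--(iii) go through verbatim for extended-real-valued $\varphi$. The trade-off between the two routes is the usual one: the paper's citation keeps the exposition short and defers to the original source, while your elementary argument makes the result transparent and shows it needs nothing beyond the definition of $\varphi$ and of weak Pareto optimality---no convexity, differentiability, or any of the standing assumptions.
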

\begin{proof}
	See \cite[Theorem 3.1, 3.2]{tanabe2024new}.\qed 
\end{proof} 

Given the definition of \( \varphi(x) \), it represents the global maximum of \(h(z)= \min_{i=1,\ldots,m} f_i(x) - f_i(z) \) over \( \mathbb{R}^n \), making its computation challenging. Fortunately, we will later find that the merit function can be discussed within a smaller set. To this end, we introduce the concept of level sets.

\begin{definition}\label{def:levelset}
	Let \( F: \mathbb{R}^n \to \mathbb{R}^m \), \( F(x) = (f_1(x), \ldots, f_m(x))^\top \), be a vector-valued function, and let \( a \in \mathbb{R}^m \). The  level set is defined as $\mathcal{L}(F, a) := \{ x \in \mathbb{R}^n : F(x) \leq a \}$. 
	%\begin{equation*} 
	%\mathcal{L}(F, a) := \{ x \in \mathbb{R}^n : F(x) \leq a \} = \bigcap_{i=1}^m \{ x \in \mathbb{R}^n : f_i(x) \leq a_i \}.
	%\end{equation*}
	Moreover, we denote
	$\mathcal{LP}_w(F, a) := \mathcal{L}(F, a) \cap \mathcal{P}_w.$ 
\end{definition}

\subsection{Assumption} 
Similar to the setup in \cite{bot2024inertial,burachik2017new,luo2025accelerated}, the following assumptions hold throughout the paper:

\begin{assum} \label{assum:Lj-muj} Each function $f_i:\mathbb{R}^n\to \mathbb{R}$, $i=1,\cdots,m$ is below bounded, convex and continuously differentiable, with Lipschitz gradients, i.e., { there exist $L_i\in \R$ such that } $\|\nabla f_i({x})-\nabla f_i({y})\|\le L_i\|{x}-{y}\|$
	%$$
	%\|\nabla f_i({x})-\nabla f_i({y})\|\le L_i\|{x}-{y}\|,
	%$$
for all  ${x},{y}\in \mathbb{R}^n$. Let $L:\max_{i=1,\cdots,m} L_i$. 
\end{assum}

\begin{assum}\label{assum:levelset}	 There exists $1\le j_0\le m$ such that the level set $\mathcal{L}(f_{j_0},\alpha)=\{{x}\in\mathbb{R}^n :f_{j_0}({x})\le \alpha\}$ is bounded for any $\alpha\in \mathbb{R}$. %Consequently,  
	%$$
	%R_{j_0}(\alpha):=\sup_{{x}\in \mathcal{L}(f_{j_0},\alpha)}\|{x}\|<+\infty, 
	%$$
%     { for all  $\alpha\in \mathbb{R}$}.
\end{assum}

\begin{assum}  \label{assum:alpha-pw} Let $\alpha\in \mathbb{R}$ such that the level set $\mathcal{L}(F,\alpha)$ is nonempty. For any ${x}\in \mathcal{L}(F,\alpha)$, the set $\mathcal{L}\mathcal{P}_w(F,F({x})):=\mathcal{P}_w\cap \mathcal{L}(F,F({x}))$ is nonempty. 
\end{assum}

\begin{remark}\label{rem:levelsetisbounded0}  By \cref{assum:levelset}, note that for any $\alpha\in \mathbb{R}$, $\mathcal{L}(F,\alpha)$ is bounded with radius $R_{j_0}(\alpha)$. Moreover, for any ${x}\in \mathbb{R}^n$, the level set of the function $h({z}):=\max _{i=1,\cdots,m}\left[f_i({z})-f_i({x})\right]$ is nonempty and bounded. Based on the smoothness of the objective function in \cref{assum:Lj-muj}, we can conclude that $\mathcal P_w$ is a closed set \cite[Chapter 6, Theorem 1.1]{luc1989vector}.
\end{remark}

Regarding the merit function $\varphi(x)$, we can consider the set-valued mapping
\begin{equation}\label{eq:setvaluedmapofmerit}
S: \mathbb R^n \rightrightarrows \mathbb R^m ,\qquad x \mapsto \underset{z \in \mathbb R^n}{\argmin} \max_{i=1,\ldots,m} f_i(z) - f_i(x).
\end{equation}
If there exists $z^* \in S(x) \neq \varnothing$, then the merit function $\varphi(x) = \min_{i=1,\ldots,m} f_i(x) - f_i(z^*)$. The following theorem states an important property of \( S(x(t)) \) which is similar to \cite[Proposition 2.4]{bot2024inertial}.

\begin{thm}\label{thm:boundedz}
	Let $a \in \mathbb R^n_+$ as described in \cref{assum:levelset}, and suppose the  function $x: [t_0, +\infty) \to \mathbb R^n$ satisfies $x(t) \in \mathcal L(F, F(x(t_0)) + a)$ for all $t \ge t_0$. The set-valued mapping $S$ is defined as \cref{eq:setvaluedmapofmerit}. Then,
		\begin{itemize} 
		\item[\(\rm (i)\)]  $S(x(t)) \subseteq \mathcal L\mathcal P_w(F, F(x(t_0)) + a)$,
		
		\item[\(\rm (ii)\)]  The function
		\begin{equation}\label{eq:defofzt}
		z^*(t) := \underset{z \in S(x(t))}{\argmin} \| z \|^2 ,
		\end{equation}
		is bounded for all  $t\ge t_0$.
        \end{itemize} 
\end{thm}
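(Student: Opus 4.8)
The plan is to derive both parts from a single observation: membership in $S(x(t))$ forces a coordinatewise inequality on the objective values. First I would record that $S(x(t))\neq\varnothing$. Writing $h_t(z):=\max_{i=1,\ldots,m}[f_i(z)-f_i(x(t))]$, this function is continuous and, by \cref{rem:levelsetisbounded0}, has a nonempty bounded sublevel set, so its global minimum over $\mathbb{R}^n$ is attained; the set of minimizers is precisely $S(x(t))$ from \cref{eq:setvaluedmapofmerit}. Since $\min_{i}[f_i(x)-f_i(z)]=-h_x(z)$, this same set is the maximizing set in the definition \cref{eq:meritfunction} of $\varphi(x(t))$, which is why $\varphi(x(t))=\min_i[f_i(x(t))-f_i(z^*)]$ for $z^*\in S(x(t))$.

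For part $\rm (i)$, I would fix an arbitrary $z^*\in S(x(t))$. Evaluating $h_t$ at the feasible point $z=x(t)$ gives $h_t(x(t))=0$, hence $h_t(z^*)\le 0$, i.e.\ $f_i(z^*)\le f_i(x(t))$ for every $i$. Combining this with the hypothesis $x(t)\in\mathcal{L}(F,F(x(t_0))+a)$, which reads $f_i(x(t))\le f_i(x(t_0))+a_i$, yields $F(z^*)\le F(x(t_0))+a$, so $z^*\in\mathcal{L}(F,F(x(t_0))+a)$. It then remains to show $z^*\in\mathcal{P}_w$, for which I would argue by contradiction: if $z^*$ were not weakly Pareto optimal there would exist $y$ with $f_i(y)<f_i(z^*)$ for all $i$, and since the maximum is taken over finitely many indices, a coordinatewise strict decrease propagates to $\max_i[f_i(y)-f_i(x(t))]<\max_i[f_i(z^*)-f_i(x(t))]$, contradicting the minimality of $z^*$ for $h_t$. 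Hence $z^*\in\mathcal{P}_w$, and together with the feasibility just established, $z^*\in\mathcal{L}\mathcal{P}_w(F,F(x(t_0))+a)$, giving the desired inclusion.

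For part $\rm (ii)$, I would first check that $z^*(t)$ in \cref{eq:defofzt} is well-defined. The set $S(x(t))$ is closed (it is the preimage of the minimum value under the continuous $h_t$), bounded (by part $\rm (i)$ it lies inside the bounded set $\mathcal{L}(F,F(x(t_0))+a)$), and convex ($h_t$ is a maximum of convex functions, so its argmin set is convex); strict convexity of $\|\cdot\|^2$ then makes the minimum-norm element unique. Boundedness of $t\mapsto z^*(t)$ is immediate and uniform in $t$ because the reference vector $F(x(t_0))+a$ is fixed: via \cref{assum:levelset} and \cref{rem:levelsetisbounded0}, $\mathcal{L}(F,F(x(t_0))+a)\subseteq\mathcal{L}(f_{j_0},f_{j_0}(x(t_0))+a_{j_0})$, whence $\|z^*(t)\|\le R_{j_0}(f_{j_0}(x(t_0))+a_{j_0})<+\infty$ for all $t\ge t_0$.

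The argument is elementary once the reduction to $h_t$ is isolated, and I do not expect any deep obstacle. The one place demanding care is the strict-inequality step of part $\rm (i)$: one must justify that a coordinatewise strict decrease of $f_i(y)-f_i(x(t))$ forces a strict decrease of their maximum, which relies essentially on the index set being finite and would fail in an infinite-$m$ setting. The remaining point to keep track of is that every bound is genuinely independent of $t$, which holds only because the reference level $F(x(t_0))+a$ does not move with $t$.
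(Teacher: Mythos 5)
Your proposal is correct and follows essentially the same route as the paper's proof: existence of minimizers via the bounded level sets of $h_t$ (Weierstrass), the inclusion in part $\rm (i)$ via $h_t(x(t))=0$ plus the finite-index contradiction argument for weak Pareto optimality, and part $\rm (ii)$ via uniqueness of the minimum-norm element and boundedness of $\mathcal{L}(F,F(x(t_0))+a)$. Your explicit verification that $S(x(t))$ is convex (needed for uniqueness of $z^*(t)$) and the explicit uniform bound $\|z^*(t)\|\le R_{j_0}(f_{j_0}(x(t_0))+a_{j_0})$ are minor refinements the paper leaves implicit.
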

\begin{proof}
	By the \cref{rem:levelsetisbounded0}, $h(z) = \max_{i=1,\ldots,m} f_i(z) - f_i(x(t))$ is a continuous convex function with nonempty and bounded level sets for all $t \ge t_0$. Fix some $t\ge t_0$,  $S(x(t))$ is  nonempty and compact  by the Weierstrass Theorem \cite[Proposition 3.2.1]{bertsekas2009convex}. Furthermore, since $\| z \|^2$ is strongly convex, there exists a unique $z^*(t)$ such that \cref{eq:defofzt} holds.
	
	We now prove $\rm (i)$. 
	%\begin{equation}\label{eq:SsmallerthanLP}
	%S(x(t)) \subseteq \mathcal L\mathcal P_w(F, F(x(t_0)) + a).
	%\end{equation}
	Let $z \in S(x(t))$. Then,
	\begin{equation*}
	\max_{i=1,\ldots,m} f_i(z) - f_i(x(t)) \le \max_{i=1,\ldots,m} f_i(x(t)) - f_i(x(t)) = 0.
	\end{equation*} 
	Moreover, since $x(t) \in \mathcal L(F, F(x_0) + a)$, we get
	\begin{equation*}
	f_i(z) \le f_i(x(t)) \le f_i(x(t_0)) + a_i, 
	\end{equation*}
	for all $i = 1, \ldots, m$, and therefore $z \in \mathcal L(F, F(x(t_0)) + a)$. To prove  (i) by contradiction, suppose $z \notin \mathcal L\mathcal P_w(F, F(x(t_0)) + a)$ and hence there exists $y \in \mathbb R^n$ such that
	$f_i(y) < f_i(z)$
 	for all  $i = 1, \cdots, m$. Hence,
	\begin{equation*}
	\max_{i=1,\ldots,m} f_i(y) - f_i(x(t)) < \max_{i=1,\ldots,m} f_i(z) - f_i(x(t)),
	\end{equation*}
	which contradicts $z \in S(x(t))$. The boundedness of $z^*(t)$ can be derived from the boundedness of the level set $\mathcal{L}(F, F(x_0) + a)$, which proves $\rm{(ii)}$.  \qed
\end{proof}
\begin{coro}\label{coro:sequenceofxk}
	Let \( a \in \mathbb{R}_+^m \) as described in \cref{assum:levelset}, and suppose the sequence \( \{x_k\} \subseteq \mathbb{R}^n \) satisfy \( x_k \in \mathcal{L}(F, F(x_0) + a) \) for all \( k \geq 1 \) and $x_1=x(t_0)$. Then,
		\begin{itemize} 
		\item[\(\rm (i)\)] \( S(F(x_k)) \subseteq \mathcal{L}\mathcal{P}_w(F, F(x_1) + a) \),
		
		\item[\(\rm (ii)\)] The sequence
		\begin{equation*}
		z_k^* := \underset{z \in S(x_k)}{\argmin} \|z\|^2,
		\end{equation*}
		is bounded for all $k\ge 1$, where \( S(x_k) := \argmin_{z \in \mathbb{R}^n} \max_{i=1,\dots,m} f_i(z) - f_i(x_k) \).
        \end{itemize} 
\end{coro}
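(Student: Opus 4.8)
The plan is to recognize that \cref{coro:sequenceofxk} is simply the discrete transcription of \cref{thm:boundedz}: although the theorem is stated for a trajectory $x(t)$, its proof never exploits any regularity in the parameter $t$ and uses only, for each fixed parameter value, the membership of the corresponding point in a single fixed level set together with \cref{rem:levelsetisbounded0} and \cref{assum:levelset}. I would therefore replay that argument verbatim, replacing the continuous index $t$ by the discrete index $k$ and $x(t)$ by $x_k$, and taking the common reference point to be $x_1 = x(t_0)$ (to which the hypotheses and both conclusions are anchored).

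Concretely, fix an arbitrary $k \ge 1$. Since $x_k \in \mathcal{L}(F, F(x_1)+a)$, \cref{rem:levelsetisbounded0} guarantees that $h(z) = \max_{i=1,\ldots,m} f_i(z) - f_i(x_k)$ is a continuous convex function with nonempty bounded level sets, so the Weierstrass theorem makes $S(x_k)$ nonempty and compact, and the strong convexity of $\|z\|^2$ makes $z_k^*$ well defined and unique. For part (i), take any $z \in S(x_k)$; evaluating $h$ at $x_k$ gives $\max_i [f_i(z) - f_i(x_k)] \le 0$, hence $f_i(z) \le f_i(x_k) \le f_i(x_1) + a_i$ for every $i$, so $z \in \mathcal{L}(F, F(x_1)+a)$. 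If $z$ were not weakly Pareto within this level set, there would exist $y$ with $f_i(y) < f_i(z)$ for all $i$, strictly lowering $\max_i [f_i(\cdot) - f_i(x_k)]$ and contradicting $z \in S(x_k)$; therefore $S(x_k) \subseteq \mathcal{L}\mathcal{P}_w(F, F(x_1)+a)$.

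For part (ii), the only point requiring care beyond a literal copy of the theorem's argument is \emph{uniformity in $k$}: the containing set $\mathcal{L}(F, F(x_1)+a)$ is the \emph{same} for every index because the reference point $x_1$ is fixed along the entire sequence. By \cref{assum:levelset} this set is bounded (it is contained in $\mathcal{L}(f_{j_0}, f_{j_0}(x_1)+a_{j_0})$, of radius at most $R_{j_0}(f_{j_0}(x_1)+a_{j_0})$), and since each $z_k^* \in S(x_k) \subseteq \mathcal{L}(F, F(x_1)+a)$ by part (i), the whole sequence $\{z_k^*\}_{k \ge 1}$ lies in one fixed bounded set and is thus uniformly bounded. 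There is no substantive obstacle here; the mild subtlety is merely to note that the contradiction argument in (i) anchors every $S(x_k)$ to this common level set rather than to a $k$-dependent one, which is exactly what delivers a bound independent of $k$.
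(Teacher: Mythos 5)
Your proposal is correct and takes essentially the same approach as the paper: the paper's own proof simply invokes \cref{thm:boundedz} ("the conclusion follows immediately"), whose argument is pointwise in the parameter and carries over verbatim to the discrete index, which is exactly what you replay. Your added remark about uniformity in $k$ (all $S(x_k)$ being anchored to the single fixed level set $\mathcal{L}(F,F(x_1)+a)$) is precisely the observation that justifies the paper's one-line citation.
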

\begin{proof}
	By \cref{thm:boundedz}, the conclusion follows immediately. \qed 
\end{proof}	

\section{Existence of solutions}\label{sec:Exist}
In this section, we  prove the existence of solutions to the differential equation, using a method analogous to that described in \cite{bot2024inertial,sonntag2024fastgradientflow,sonntag2024fastNestrovAlgorithm}. Consider the following Cauchy problem \cref{eq:Cauchy-Problem}:
\begin{equation}\label{eq:Cauchy-Problem}
\left.
\begin{aligned}
%&r(t,x,\dot x) = \frac{\alpha - \beta}{t^p} \frac{\|\dot{x}(t)\|}{\|\mathbf{proj}_{C(x(t))}(0)\|} \mathbf{proj}_{C(x(t))}(0), \\
&\frac{\alpha}{t} \dot{x}(t) + \mathbf{proj}_{C(x(t)) + \ddot{x}(t)+r(t,x,\dot x)}(0) = 0 ,
&x(t_0) = x_0, \dot{x}(t_0) = v_0.
\end{aligned}
\right.\tag{CP}
\end{equation}
where $r(t,x,\dot x) = \frac{\alpha - \beta}{t^p} \frac{\|\dot{x}(t)\|}{\|\mathbf{proj}_{C(x(t))}(0)\|} \mathbf{proj}_{C(x(t))}(0)$ and $t_0\ge 1$. To prove the existence of solutions to \cref{eq:Cauchy-Problem}, analogous to the discussion in \cite{sonntag2024fastgradientflow}, it suffices to focus on the existence of solutions to the following differential inclusion \cref{eq:DI}:
\begin{equation}
\left.
\begin{aligned}
&(\dot{u}(t), \dot{v}(t)) \in G(t, u(t), v(t)) ,
&(u(t_0) ,  v(t_0) )= (u_0, v_0),\\
\end{aligned}
\right.\tag{DI}
\label{eq:DI}
\end{equation}
where
\begin{equation}
\begin{aligned}
&G: [t_0, +\infty) \times \mathbb{R}^n \setminus \mathcal{P}_w \times \mathbb{R}^n \rightrightarrows \mathbb{R}^n \times \mathbb{R}^n,\\
& (t, u, v) \mapsto \left\{ \left( v, -\frac{\alpha}{t}v - \underset{g \in C(u) + r(t,u,v)}{\argmin} \, \langle g, -v \rangle \right) \right\}.
\end{aligned}
\label{eq:G_def}
\end{equation}

\subsection{Existence of solutions to the differential inclusion \cref{eq:DI}} 

%Since the equation is non-smooth at $\mathcal{P}_w$, we primarily consider solutions on $\mathbb{R}^n \setminus \mathcal{P}_w$. 
%The following lemma summarizes relevant properties of the set-valued mapping $G$.
For $G$, we have the following relevant properties:
\begin{lem}
	Assume  \cref{assum:Lj-muj,assum:levelset,assum:alpha-pw} holds true. The set-valued mapping $G$ defined in \eqref{eq:G_def} has the following properties:
	\begin{itemize}
		\item[$\rm (i)$] For any $(t, u, v) \in [t_0, +\infty) \times \mathbb{R}^n \setminus \mathcal{P}_w \times \mathbb{R}^n$, the set $G(t, u, v) \subset \mathbb{R}^n \times \mathbb{R}^n$ is convex and compact.
		\item[$\rm (ii)$] $G$ is upper semicontinuous.
		\item[$\rm (iii)$] The following mapping is locally compact:
		\begin{equation*}
		\phi: [t_0, +\infty) \times \mathbb{R}^n\setminus\mathcal P_w \times \mathbb{R}^n \to \mathbb{R}^n \times \mathbb{R}^n, \quad (t, u, v) \mapsto \mathbf{proj}_{G(t,u,v)}(0).
		\end{equation*}
		
	\end{itemize}
	\label{lem:SSS}
\end{lem}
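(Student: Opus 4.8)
The plan is to treat each component of $G$ separately and lean on Berge's Maximum Theorem for the delicate part. First note that the vector $r(t,u,v)=\frac{\alpha-\beta}{t^p}\frac{\|v\|}{\|\proj_{C(u)}(0)\|}\proj_{C(u)}(0)$ is a single well-defined vector on the whole domain, since $u\in\mathbb{R}^n\setminus\mathcal{P}_w$ forces $\proj_{C(u)}(0)\neq 0$; moreover $u\mapsto\proj_{C(u)}(0)$ is continuous, so $r$ is continuous in $(t,u,v)$ there. Writing $G(t,u,v)=\{v\}\times\bigl(-\tfrac{\alpha}{t}v-M(t,u,v)\bigr)$ with $M(t,u,v):=\argmin_{g\in C(u)+r(t,u,v)}\langle g,-v\rangle$, all three claims reduce to corresponding claims about the correspondence $M$.

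For (i), observe that $C(u)=\conv{\{\nabla f_i(u):i=1,\dots,m\}}$ is a polytope, hence convex and compact, and so is its translate $C(u)+r(t,u,v)$. The set $M(t,u,v)$ is the set of minimizers of the linear functional $g\mapsto\langle g,-v\rangle$ over this compact convex set; such a set is a nonempty exposed face, which is again convex and compact (when $v=0$ it is the whole feasible set). Since $-\tfrac{\alpha}{t}v-(\cdot)$ is affine and the Cartesian product with the singleton $\{v\}$ preserves convexity and compactness, $G(t,u,v)$ is convex and compact.

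For (ii), I would invoke Berge's Maximum Theorem. The objective $(g,(t,u,v))\mapsto\langle g,-v\rangle$ is jointly continuous. The constraint correspondence $(t,u,v)\mapsto C(u)+r(t,u,v)$ has nonempty compact values and is continuous in the Hausdorff sense: $u\mapsto C(u)$ is Hausdorff-continuous because its vertices $\nabla f_i(u)$ vary continuously by \cref{assum:Lj-muj}, and $r$ is continuous as noted, so their Minkowski translate is continuous. Berge then yields that $M$ is upper semicontinuous with compact values; composing with the continuous affine map and forming the product with the continuous first coordinate $v$ preserves upper semicontinuity, so $G$ is upper semicontinuous. For (iii), since $G(t,u,v)$ is convex and compact, $\phi(t,u,v)=\proj_{G(t,u,v)}(0)$ is the unique minimum-norm element and hence single-valued. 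Local compactness follows from local boundedness of $G$: fixing any point of the domain and a compact neighborhood $N$ that avoids $\mathcal{P}_w$ in the $u$-coordinate, the quantities $\|v\|$, $\alpha/t$, $\sup_{g\in C(u)}\|g\|$ and $\|r(t,u,v)\|$ are all bounded on $N$ (the last because $\|\proj_{C(u)}(0)\|$ is bounded away from $0$ there by continuity and positivity), so $G(N)$ lies in a fixed ball; hence $\phi(N)$ is bounded, i.e. relatively compact.

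The main obstacle is part (ii): verifying the full (both upper and lower) semicontinuity of $u\mapsto C(u)$ in the Hausdorff metric and the continuity of $r$ — in particular securing a uniform lower bound on $\|\proj_{C(u)}(0)\|$ over neighborhoods in $\mathbb{R}^n\setminus\mathcal{P}_w$ — so that Berge's theorem genuinely applies and the argmin map $M$ is upper semicontinuous, including in the degenerate direction $v=0$.
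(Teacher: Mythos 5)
Your proof is correct and takes essentially the same route as the paper: the paper also reduces everything to upper semicontinuity of the parametric argmin map $(t,u,v)\mapsto\argmin_{g\in C(u)+r(t,u,v)}\langle g,-v\rangle$, invoking a stability theorem (Dontchev--Rockafellar, Theorem 3B.5, the counterpart of your Berge argument) after noting continuity and local boundedness of the constraint map $C(u)+r(t,u,v)$, and then treats $G$ as a Cartesian product of upper semicontinuous compact-valued maps, with (iii) deduced from the resulting local boundedness. The ``obstacle'' you flag at the end is not a genuine gap: Hausdorff continuity of $u\mapsto C(u)$ (its vertices $\nabla f_i(u)$ vary continuously, so $d_H$ between hulls is at most $\max_i\|\nabla f_i(u)-\nabla f_i(u')\|$), continuity of the projection under Hausdorff convergence of convex compact sets, and positivity of $\|\proj_{C(u)}(0)\|$ on compact neighborhoods avoiding $\mathcal{P}_w$ are exactly the standard facts you already sketched, and they are the same facts the paper asserts (even more tersely) when it states that $C(u)+r(t,u,v)$ is continuous.
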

\begin{proof}
(i) For any $(t,u,v)$, note that $C(u) + r(t,u,v)$ is convex and compact because it is the sum of the convex hull of a finite set and a fixed vector. Therefore, the set $\arg\min_{g \in C(u) + r(t,u,v)} \langle g, -v \rangle$ is convex and compact. On the other hand, since convexity is preserved under Cartesian products, $G(t,u,v)$ is convex and compact.  

(ii) Let $\Omega = [t_0, +\infty) \times \R^n \setminus \mathcal{P}_w \times \R^n$. By \cite[Proposition 3.8]{Attouch2014}, $\proj_{C(u)}(0)$ is continuous in $u$. Thus, $r(t,u,v) = \frac{\alpha - \beta}{t} \frac{\|v\|}{\|\proj_{C(u)}(0)\|} \proj_{C(u)}(0)$ is continuous on $\{(t,u,v) \in \Omega :\proj_{C(u)}(0) \neq 0\}$. By the convexity of $f_i$ for all $i=1,\cdots,m$ and \cref{rem:KKT=Optimal}, $r(\cdot)$ is continuous on $\Omega$. Since $C(\cdot)$ is continuous on $\R^n \backslash \mathcal{P}_w$, the mapping $C(\cdot) + r(\cdot)$ is continuous on $\Omega $. Furthermore, by \cite[Theorem 3B.5]{dontchev2009implicit}, the set-valued mapping $\arg\min_{g \in C(u) + r(t,u,v)} \langle g, -v \rangle$ is upper semicontinuous. Similar to the proof of \cite[Proposition 3.1]{sonntag2024fastgradientflow}, we conclude that $G$ is upper semicontinuous.  

(iii) This follows directly from (ii).	\qed
\end{proof}
%The following is the existence theorem, which can be referred to in \cite{aubin2009differential,sonntag2024fastgradientflow}.
Below we demonstrate the existence of a  trajectory solution for th\cref{eq:DI}.

\begin{thm}
	Assume  \cref{assum:Lj-muj,assum:levelset,assum:alpha-pw} holds true, then for any initial condition $(u_0, v_0) \in \mathbb{R}^n \setminus \mathcal{P}_w \times \mathbb{R}^n$, there exist $T > t_0$ and an absolutely continuous function $(u(\cdot), v(\cdot))$ defined on $[t_0, T]$ that is a solution to the differential inclusion \eqref{eq:DI}.
\end{thm}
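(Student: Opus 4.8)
The plan is to recognize the statement as a direct application of the general existence result \cref{thm:exist} to the reformulated first-order system, with all the structural hypotheses already secured by \cref{lem:SSS}. First I would collapse the two relations in \eqref{eq:DI} into a single first-order differential inclusion $\dot w(t) \in G(t, w(t))$ for the combined state $w = (u, v)$ living in the Hilbert space $\mathcal X = \mathbb R^n \times \mathbb R^n$, with initial datum $w_0 = (u_0, v_0)$ prescribed at time $t_0$, so that the abstract framework of \cref{thm:exist} with $\mathcal X = \mathbb R^{2n}$ applies verbatim.

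Next I would fix the domain. Since \cref{assum:alpha-pw} guarantees that $\mathcal P_w$ is closed, its complement $\mathbb R^n \setminus \mathcal P_w$ is open, so $u_0$ sits in an open set; this is exactly where the normalized term $\frac{1}{\|\proj_{C(u)}(0)\|}\proj_{C(u)}(0)$ in $r(t,u,v)$ remains well defined. Because $t_0 \ge 1 > 0$, the coefficient maps $t \mapsto \alpha/t$ and $t \mapsto (\alpha-\beta)/t^p$ entering $G$ and $r$ stay well defined and continuous on a full neighborhood of $t_0$; I would therefore extend $G$ to $\Omega := (\tau, +\infty) \times (\mathbb R^n \setminus \mathcal P_w) \times \mathbb R^n$ for some $0 < \tau < t_0$, an open subset of $\mathbb R \times \mathcal X$ containing $(t_0, w_0)$ as an interior point, on which the same formula \eqref{eq:G_def} defines $G$ and all conclusions of \cref{lem:SSS} persist unchanged.

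Then I would verify the three hypotheses of \cref{thm:exist} one at a time. Convexity and compactness, hence closedness, of the values $G(t,w) \subset \mathcal X$ come from \cref{lem:SSS}$\rm (i)$; nonemptiness follows because $C(u) + r(t,u,v)$ is a nonempty compact convex set and $g \mapsto \langle g, -v \rangle$ is continuous, so the inner $\argmin$ is nonempty by the Weierstrass theorem and therefore so is $G(t,w)$. Upper semicontinuity of $G$ is precisely \cref{lem:SSS}$\rm (ii)$, and the local compactness of the projection map $(t,w) \mapsto \proj_{G(t,w)}(0)$ is \cref{lem:SSS}$\rm (iii)$. With these checks in place, \cref{thm:exist} applies directly and yields some $T > t_0$ together with an absolutely continuous $w(\cdot) = (u(\cdot), v(\cdot))$ on $[t_0, T]$ solving $\dot w(t) \in G(t, w(t))$ with $w(t_0) = w_0$, which is exactly a solution of \eqref{eq:DI}.

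I do not expect a genuine obstacle here, since \cref{lem:SSS} already carries the analytic weight of the argument; the only points needing care are bookkeeping ones. The first is confirming nonemptiness of the $\argmin$-defined values, which is not stated explicitly in \cref{lem:SSS} and which I would supply through the Weierstrass argument above. The second is arranging the open domain $\Omega$ around the boundary time $t_0$ so that the half-line $[t_0, +\infty)$ does not conflict with the openness requirement of \cref{thm:exist}, which the slight backward extension to $(\tau, +\infty)$ resolves cleanly.
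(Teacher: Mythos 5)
Your proposal is correct and follows exactly the paper's route: the paper's own proof is a one-line appeal to \cref{lem:SSS} combined with \cref{thm:exist}. The bookkeeping details you add (nonemptiness of the $\argmin$ values via Weierstrass, and the slight backward extension of the time domain to obtain an open set containing $(t_0, u_0, v_0)$) are left implicit in the paper but are sound and only strengthen the argument.
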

\begin{proof}
	Follows directly from  \cref{lem:SSS} and \cref{lem:exist}\qed 
\end{proof}

\begin{thm}\label{eq:exsitenceofDI}
	Assume  \cref{assum:Lj-muj,assum:levelset,assum:alpha-pw} holds true, then for any initial condition $(u_0, v_0) \in \mathbb{R}^n \setminus \mathcal{P}_w \times \mathbb{R}^n$, there exists a function $(u(\cdot), v(\cdot))$ defined on $[t_0, T)\subseteq [1,+\infty )$ that is absolutely continuous on any closed subinterval and is a solution to the differential inclusion \eqref{eq:DI}, satisfying:
	\begin{itemize}
		\item[{$\rm (i)$}] $(u, v)$ cannot be extended beyond $[t_0, T)$.
		\item[{$\rm (ii)$}] Either $T = +\infty$, or $T < +\infty$ and $u_T := \lim_{t \to T^{-}} u(t) \in \mathcal{P}_w$.
	\end{itemize}
\end{thm}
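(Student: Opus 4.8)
The plan is to upgrade the local existence result into a maximal solution and then isolate the single mechanism that can terminate the maximal interval in finite time. First I would order the set of all local solutions of \eqref{eq:DI} issued from $(u_0,v_0)$ by extension, declaring $(u_1,v_1)\preceq(u_2,v_2)$ when the domain of the first is contained in that of the second and the two agree there. Every chain admits an upper bound, namely the union of its members, which is again a solution because it is absolutely continuous on each closed subinterval and satisfies the inclusion almost everywhere; Zorn's lemma then produces a maximal element $(u,v)$ defined on some interval $[t_0,T)$. The local existence theorem (\cref{thm:exist}) forces this interval to be right-open: if $(u,v)$ were defined on a closed interval $[t_0,T]$ with $(T,u(T),v(T))$ lying in the open domain $[t_0,+\infty)\times(\mathbb{R}^n\setminus\mathcal{P}_w)\times\mathbb{R}^n$, we could restart there and extend, contradicting maximality. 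This establishes $\rm(i)$.

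Next I would rule out finite-time blow-up in order to reach $\rm(ii)$. Assume $T<+\infty$. Along the trajectory $\dot u=v$ and $\dot v=-\tfrac{\alpha}{t}v-g$, where $g\in C(u)+r$ is the minimizer appearing in \eqref{eq:G_def}. The crucial observation is that the normalization keeps the correction term bounded: $\|r\|=\tfrac{\alpha-\beta}{t^p}\|v\|\le(\alpha-\beta)\|v\|$ for $t\ge t_0\ge1$, while Lipschitz continuity of the gradients (\cref{assum:Lj-muj}) gives $\|\nabla f_i(u)\|\le c+L\|u\|$, so every element of $C(u)+r$, and in particular $g$, has norm at most $c+L\|u\|+(\alpha-\beta)\|v\|$. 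Consequently, for almost every $t$,
\begin{equation*}
\frac{d}{dt}\big(\|u\|+\|v\|\big)\le\|v\|+\frac{\alpha}{t}\|v\|+\|g\|\le a(t)\big(\|u\|+\|v\|\big)+b(t),
\end{equation*}
with $a,b$ bounded on the finite interval $[t_0,T)$, and Gronwall's inequality bounds $\|u\|+\|v\|$ there. Thus the trajectory cannot escape to infinity in finite time.

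With $u$ and $v$ bounded on $[t_0,T)$, the same bound on $\|g\|$ shows $\dot v$ is bounded, and $\dot u=v$ is bounded, so both $u$ and $v$ are Lipschitz and the limits $u_T:=\lim_{t\to T^-}u(t)$ and $v_T:=\lim_{t\to T^-}v(t)$ exist. Now I would argue by contradiction: if $u_T\notin\mathcal{P}_w$, then $(T,u_T,v_T)$ lies in the open domain (recall $\mathcal{P}_w$ is closed by \cref{assum:alpha-pw}), so \cref{thm:exist} supplies a solution on $[T,T+\delta)$; concatenating it with $(u,v)$ yields an absolutely continuous solution on $[t_0,T+\delta)$, since the two pieces match at $T$ and the inclusion holds almost everywhere, contradicting maximality. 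Hence $u_T\in\mathcal{P}_w$, which is precisely alternative $\rm(ii)$.

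The hard part will be the a priori estimate of the second paragraph, because the right-hand side is coupled through $C(u)$ and is singular exactly on $\mathcal{P}_w$: naively one fears the correction $r$ blows up as $\|\proj_{C(u)}(0)\|\to0$ near $\mathcal{P}_w$. The point that rescues the Gronwall argument is that the normalization cancels this denominator, leaving $\|r\|=\tfrac{\alpha-\beta}{t^p}\|v\|$, which stays controlled even as the trajectory approaches $\mathcal{P}_w$. The second delicate point is securing the existence of the limit $v_T$, which is needed to restart the flow and close the contradiction; this follows once $\dot v$ is shown to be bounded via the bound on $\|g\|$.
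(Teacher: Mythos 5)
Your proposal is correct and follows essentially the same route as the paper: a Zorn's-lemma maximal solution, an a priori Gronwall bound that exploits the cancellation $\|r(t,u,v)\|=\tfrac{\alpha-\beta}{t^p}\|v\|$ together with the Lipschitz bound $\|\nabla f_i(u)\|\le \|\nabla f_i(0)\|+L\|u\|$ to rule out finite-time blow-up, existence of the left limits $u_T,v_T$, and a restart-and-concatenate contradiction (using closedness of $\mathcal{P}_w$) when $u_T\notin\mathcal{P}_w$. The only cosmetic difference is that you run Gronwall on $\|u\|+\|v\|$ while the paper uses $h(t)=\|(u(t),v(t))-(u(t_0),v(t_0))\|$ with Cauchy--Schwarz; the substance is identical.
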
	
\begin{proof}
	Completely analogous to that described in \cite[Theorem 3.5]{sonntag2024fastgradientflow}. Define
	\begin{equation*}
    \begin{aligned} 
	\mathfrak{S} := \{ (u,v) :&\  [t_0, T) \to \mathbb{R}^n \times \mathbb{R}^n \\&: T \in [t_0, +\infty], (u,v) \text{ is a solution of \eqref{eq:DI} on } [t_0, T) \}
    \end{aligned} 
	\end{equation*}
	For the two solutions in \eqref{eq:DI}, $(u_1(\cdot),v_1(\cdot)):[t_0,T_1)\to \R^n\times \R^n$ and $(u_2(\cdot),v_2(\cdot)):[t_0,T_2)\to \R^n\times \R^n$, we define the partial ordering $\preccurlyeq$ in $\mathfrak S$ as follows:
	\begin{equation*} 
	\begin{aligned} 
	&(u_1(\cdot ),v_1(\cdot))\preccurlyeq (u_2(\cdot),v_2(\cdot)) :\\ &\Leftrightarrow
	T_1\le T_2\text{ and }(u_1(t),v_1(t))=(u_2(t),v_2(t))\text{ for all }t\in[t_0,T_1).
	\end{aligned}
	\end{equation*}
	Then, there exists a maximal element $(u(\cdot), v(\cdot))$ defined on $[t_0, T)$ by Zorn's Lemma. If $T = +\infty$, we are done. If $T < +\infty$, for $h(t) := \|(u(t), v(t)) - (u(t_0), v(t_0))\|$
	%\begin{equation*}
	%h(t) := \|(u(t), v(t)) - (u(t_0), v(t_0))\|.
	%\end{equation*}
	%Using the Cauchy-Schwarz inequality, 
    we have
	\begin{equation*}
	\frac{d}{dt}\frac12 h^2(t) \le \|(\dot{u}(t), \dot{v}(t))\| h(t).
	\end{equation*}
	Note that $(\dot u(t),\dot v(t)) = (v,-\frac{\alpha}{t}v-g)$, $g\in\argmin_{C(u)+r(t,u,v)}\langle g,-v\rangle $, 
	\begin{align*}
	\|(\dot{u}(t), \dot{v}(t))\| &\le \|v\| + \left\| -\frac{\alpha}{t}v - g \right\| \\
	&\le \left(1 + \frac{\alpha}{t}\right) \|v\| + \max_{\theta \in \Delta^m} \left\| \sum_{i=1}^m \theta_i \nabla f_i(u) \right\| + \|r(t,u,v)\| \\
	&\le \left(1 + \frac{2\alpha - \beta}{t_0}\right) \|v\| + L \|u\| + \max_{i=1,\dots,m} \|\nabla f_i(0)\| \\&\le c(1 + \|(u,v)\|).
	\end{align*}
	with $c= \sqrt 2 \max \{(1+\frac{2\alpha -\beta}{t_0}),L,\max_{i=1,\cdots,m}\|\nabla f_i(0)\|\}$. Define $\bar{c} = c(1 + \|(u(t_0), v(t_0))\|)$. Applying the triangle inequality, we get
	\begin{equation*}
	\|(\dot{u}(t), \dot{v}(t))\| \le \bar{c}(1 + \|(u(t), v(t)) - (u(t_0), v(t_0))\|).
	\end{equation*}
	As argued in \cite{sonntag2024fastgradientflow}, for any $\varepsilon > 0$ and almost all $t \in [t_0, T - \varepsilon]$, we have
	%\begin{equation*}
	$h(t) \le \bar{c} T \exp(\bar{c} T)$.
	%\end{equation*}
	By the independence of $\varepsilon$ and $t$, we have $h$ is uniformly bounded on $[t_0, T)$. This implies that $\dot{u}(t)$, $\dot{v}(t)$, $u(t)$, and $v(t)$ are bounded. Since $u$ and $v$ are absolutely continuous on $[t_0, t]$ for any $t \in [t_0, T)$,
	\begin{equation*}
	u(t) = u_0 + \int_{t_0}^t \dot{u}(s) \, ds, \quad v(t) = v_0 + \int_{t_0}^t \dot{v}(s) \, ds,
	\end{equation*}
	the left limits of $u(t)$ and $v(t)$ at $T$ exist. Thus, define
	\begin{equation*}
	u_T := u_0 + \int_{t_0}^T \dot{u}(s) \, ds \in \mathbb{R}^n, \quad v_T := v_0 + \int_{t_0}^T \dot{v}(s) \, ds \in \mathbb{R}^n.
	\end{equation*}
	Similar to \cite{sonntag2024fastgradientflow}, If $u_T \notin P_w$, then there exists a solution $(u^*(\cdot),v^*(\cdot))$ such that $(u(\cdot),v(\cdot)) \ne (u^*(\cdot),v^*(\cdot))$ and $(u(\cdot),v(\cdot)) \preccurlyeq (u^*(\cdot),v^*(\cdot))$, contradicting the maximality of $(u(t), v(t)) $. \qed
\end{proof}

\subsection{Existence of solutions to the differential equation \cref{eq:Cauchy-Problem}}
We establish the existence of solutions to \cref{eq:Cauchy-Problem} in a manner similar to \cite{sonntag2024fastNestrovAlgorithm}.
\begin{definition}\label{def:local-solution}
    We call a function $x:[t_0,T)$, $t\mapsto x(t)$ with $T\in(t_0,+\infty]$ a local solution to $\cref{eq:Cauchy-Problem}$ if it satisfies the following conditions:
    	\begin{itemize}
        \item[$\rm (i)$] $x(t) \in C^1([t_0, T))$, i.e., $x(t)$ is continuously differentiable on $[t_0, T)$;
        \item[$\rm (ii)$] For any $t_0 \le T' < T$, $\dot{x}(t)$ is absolutely continuous on $[t_0, T']$;
        \item[$\rm (iii)$] There exists a measurable function $\ddot{x}: [t_0, T) \to \mathbb{R}^n$ such that $\dot{x}(t) = \dot{x}(t_0) + \int_{t_0}^t \ddot{x}(s) \, ds$ for $t \ge t_0$;
        \item[$\rm (iv)$] $\dot{x}$ is differentiable almost everywhere and $\frac{d}{dt} \dot{x}(s) = \ddot{x}(s)$ holds for almost all $t \in [t_0, T)$;
        \item[$\rm (v)$] $\frac{\alpha}{t} \dot{x}(t) + \mathbf{proj}_{C(x(t)) + \ddot{x}(t)}(0) = 0$ holds for almost all $t \in [t_0, T)$;
        \item[$\rm (vi)$] $x(t_0) = x_0$ and $\dot{x}(t_0) = v_0$ hold.
    \end{itemize}
\end{definition}
{ \begin{definition}\label{def:global-solution}
    We call a function $x:[t_0,T)$, $t\mapsto x(t)$ with $T\in(t_0,+\infty]$ a global solution to $\cref{eq:Cauchy-Problem}$ if it satisfies the following conditions:
    \begin{itemize}
        \item[$\rm(i)$] $x$ is a local soution;
        \item[$\rm (ii)$] For any given $T' \ge T$, if the function $y(\cdot)$ defined on $[t_0, T')$ is a local solution to \cref{eq:Cauchy-Problem} and satisfies $y(t) = x(t)$ for all $t \in [t, T)$, then it must hold that $T' = T$.
    \end{itemize}
\end{definition}}
\begin{thm}
For all initial values $(x_0,v_0)\in \R^n\setminus\mathcal P_w\times \R^n$, there exists a function $x(\cdot)$ which is global solution of \cref{eq:Cauchy-Problem} in the sense of \cref{def:global-solution}. 
\end{thm}
\begin{proof}
	Let $(u(t), v(t))$ be a maximal solution to the differential inclusion with $T\in(t_0,+\infty]$ and the condition $\rm (ii)$  in \cref{def:global-solution} holds. Let $x(t)=u(t)$, then by the definition of the set-value mapping $G$ showed in \cref{eq:G_def}, we have $\dot x(t)=v(t)$.  This implies that $v(t)$ is  continuous on $[t_0, T)$, hence $x(t) \in C^1([t_0, T)$. This proves (i) and (ii) in the \cref{def:local-solution}. By Definition 3.3 in \cite{Attouch2014}, (iii) and (iv) hold. By \cref{eq:exsitenceofDI,lem:lemmaA4}, (v) and (vi) hold. Therefore, $x(t)$ is a local solution which is the condition $\rm (i)$ in \cref{def:global-solution}.  \qed
\end{proof}

%%%%%%%%%%%%%%%%%%%%%%%%%%%%%%%%%%%%%%%%%%%%%%%%%%%%%
\section{Asymptotic analysis for gradient-like flow}
\label{sec:AsymptoticofMACG}
This section discusses the properties of the global solution in $[t_0,+\infty )$ to \cref{eq:Cauchy-Problem} under the initial condition $(x(t_0),\dot{x}(t_0)) = (x_0,0)$. %Under this initial condition, the solution trajectory will be contained within the level set $\mathcal{L}(F,F(x_0))$ (\cref{lem:boundedsolution}). 
In \cref{thm:WeakParetoPoint} , we prove that when $\alpha > \beta > 3$ and $p>1$, $x(t)$ converges to a weakly Pareto optimal solution. To establish this result, we first demonstrate in  \cref{thm:propositionofmeritfunction} that for $\alpha \ge  \beta \geq 3$ and $p>1$, the convergence rate  satisfies $\varphi(x(t)) = O(1/t^2)$. Additionally, we establish a weaker result for $p=1$: $\varphi(x(t)) = O({\ln^2 t}/{t^2})$.

We recall that \cref{assum:Lj-muj,assum:levelset,assum:alpha-pw} hold throughout the paper, which simplifies the statements of our subsequent theorems. We give an important notation:

\begin{itemize}
	\item[$\bullet$] The level set $\mathcal{L}(F,F(x_0))$ is bounded, and we denote its radius by $R$, i.e.,
	\begin{equation}\label{eq:Levelsetbounded}
	R := \sup_{x \in \mathcal{L}(F,F(x_0))} \|x\|.
	\end{equation}
    %\item[$\bullet$] The function $z^*(t) := \argmin_{z \in S(x(t))} \|z\|^2$ is bounded, where $S(x(t)) := \argmin_{z \in \mathbb{R}^n} \max_{i=1,\cdots,m}f_i(z) - f_i(z)$, and we have established this result in \cref{thm:boundedz}.
\end{itemize}
%\subsection{Preliminary remarks and estimations}
Before proving the convergence, we provide two important propositions:

\begin{lem}\label{lem:energyfunctionW}
	Let $x: [t_0, +\infty) \to \mathbb{R}^n$ be a global solution to \cref{eq:Cauchy-Problem} with $p\ge 1$ and $t_0\ge 1$. For $i = 1, \dots, m$, define the global energy
	\begin{equation}\label{eq:energyfunctionW}
	\mathcal{W}_i: [t_0, +\infty) \to \mathbb{R}, \quad t \mapsto  f_i(x(t)) + \frac{1}{2} \|\dot{x}(t)\|^2.
	\end{equation}
	Then,  we have $\frac{d}{dt} \mathcal{W}_i(t) \leq -\frac{\beta}{t} \|\dot{x}(t)\|^2$ for $i = 1, \dots, m$ and almost all $t \in [t_0, +\infty)$. Moreover, $\lim_{t \to \infty} \mathcal{W}_i(t) = \inf_{t \geq t_0} \mathcal{W}_i(t) \in \mathbb{R}$ exists.
\end{lem}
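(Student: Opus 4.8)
The plan is to differentiate $\mathcal{W}_i$ along the trajectory and exploit the variational (normal-cone) characterization of the projection appearing in \cref{eq:Cauchy-Problem}. First I would invoke the regularity established earlier (parts (iii)--(v) of the existence theorem) so that, for almost every $t$, the chain rule gives
\[
\frac{d}{dt}\mathcal{W}_i(t) = \langle \nabla f_i(x(t)), \dot{x}(t)\rangle + \langle \ddot{x}(t), \dot{x}(t)\rangle.
\]
To eliminate $\ddot{x}$, I would rewrite the second line of \cref{eq:Cauchy-Problem}. Setting $g := \proj_{C(x(t))}(-\ddot{x}-r)$, the projection of $0$ onto the translated set decomposes as $\proj_{C(x(t))+\ddot{x}+r}(0) = g + \ddot{x} + r$, since minimizing $\|\xi+\ddot x + r\|$ over $\xi\in C(x(t))$ is the same as projecting $-\ddot x-r$ onto $C(x(t))$. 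Hence the dynamics read $\ddot{x} + \frac{\alpha}{t}\dot{x} + r + g = 0$, equivalently $-\ddot{x}-r-g = \frac{\alpha}{t}\dot{x}$.

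The key step is the following index-uniform inequality. Since $g$ is the projection of $-\ddot{x}-r$ onto the convex set $C(x(t))$, the obtuse-angle inequality gives $\langle (-\ddot{x}-r) - g,\, \xi - g\rangle \le 0$ for all $\xi \in C(x(t))$. Because $(-\ddot{x}-r)-g = \frac{\alpha}{t}\dot{x}$, dividing by the positive factor $\alpha/t$ yields $\langle \dot{x},\, \xi - g\rangle \le 0$ for all $\xi \in C(x(t))$. Choosing $\xi = \nabla f_i(x(t)) \in C(x(t))$ gives the crucial bound $\langle \nabla f_i(x(t)), \dot{x}\rangle \le \langle g, \dot{x}\rangle$. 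Substituting $\ddot{x} = -\frac{\alpha}{t}\dot{x} - r - g$ into the derivative and applying this bound makes the $\langle g,\dot{x}\rangle$ contributions cancel, leaving
\[
\frac{d}{dt}\mathcal{W}_i(t) \le -\frac{\alpha}{t}\|\dot{x}\|^2 - \langle r, \dot{x}\rangle.
\]

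It then remains to control $\langle r,\dot{x}\rangle$, and this is exactly where the hypothesis $p\ge 1$ enters (for $\alpha=\beta$ we have $r\equiv 0$ and nothing is needed; for $\alpha>\beta$ the denominator $\|\proj_{C(x(t))}(0)\|$ is nonzero along the trajectory, which stays off $\mathcal{P}_w$). Using the explicit form of $r$ and Cauchy--Schwarz, together with $\alpha-\beta\ge 0$,
\[
\langle r,\dot{x}\rangle = \frac{\alpha-\beta}{t^p}\frac{\|\dot x\|}{\|\proj_{C(x(t))}(0)\|}\,\langle \proj_{C(x(t))}(0), \dot x\rangle \ge -\frac{\alpha-\beta}{t^p}\|\dot x\|^2.
\]
Since $t\ge t_0\ge 1$ and $p\ge 1$ force $t^p\ge t$, hence $t^{-p}\le t^{-1}$, the right-hand side is bounded below by $-\frac{\alpha-\beta}{t}\|\dot x\|^2$. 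Inserting this produces exactly $\frac{d}{dt}\mathcal{W}_i(t)\le -\frac{\beta}{t}\|\dot x\|^2$. Finally, as $\beta>0$ and $t>0$ the right-hand side is nonpositive, so $\mathcal{W}_i$ is non-increasing; and since $\mathcal{W}_i(t)\ge f_i(x(t))\ge \inf f_i > -\infty$ by the lower-boundedness in \cref{assum:Lj-muj}, the monotone function is bounded below and therefore converges to its infimum in $\mathbb{R}$.

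I expect the main obstacle to be the bookkeeping around the projection: correctly reducing $\proj_{C(x(t))+\ddot{x}+r}(0)$ to a projection onto $C(x(t))$ and extracting the sign-definite inequality $\langle \nabla f_i(x),\dot x\rangle\le\langle g,\dot x\rangle$ from the normal-cone condition. The subsequent estimate of the normalized-gradient correction is routine once one observes that $p\ge 1$ with $t\ge 1$ is precisely what is needed to dominate $t^{-p}$ by $t^{-1}$.
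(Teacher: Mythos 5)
Your proposal is correct and follows essentially the same route as the paper: the paper applies the variational inequality for $\proj_{C(x(t))+r+\ddot x}(0)=-\frac{\alpha}{t}\dot x$ directly to the element $\nabla f_i(x(t))+r+\ddot x$ of the translated set, which is algebraically identical to your decomposition $\proj_{C+v}(0)=\proj_C(-v)+v$ followed by the obtuse-angle inequality with $\xi=\nabla f_i(x(t))$. The remaining steps — cancellation of the $\langle g,\dot x\rangle$ terms, Cauchy--Schwarz on the normalized-gradient correction, and $t^{-p}\le t^{-1}$ from $p\ge 1$, $t\ge t_0\ge 1$ — match the paper's proof exactly.
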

\textit{Proof}
	The function $\mathcal{W}_i$ is almost everywhere differentiable on $[t_0, +\infty)$, and
	\begin{equation}\label{eq:energyfunctiondiff}
	\begin{aligned}
	\frac{d}{dt} \mathcal{W}_i(t)& = \Big<  \nabla f_i(x(t)), \dot{x}(t) \Big>  + \Big< \ddot{x}(t), \dot{x}(t) \Big>.
	\end{aligned}
	\end{equation}
	Since $-\frac{\alpha}{t} \dot{x}(t) = \proj_{C(x(t)) + r(t,x(t),\dot x(t))}(0)$ and  $\nabla f_i(x(t)) \in C(x(t))$, we have,
	\begin{equation*}
	\left\langle \ddot{x}(t) + \frac{\alpha}{t} \dot{x}(t)  +\nabla f_i(x(t))+r(t,x(t),\dot x(t)),\dot x(t) \right\rangle \leq 0 .
	\end{equation*}
By rearranging the terms, we obtain the first result. By the \cref{assum:Lj-muj}, $f_i$ is bounded below, then we have the second result.\qed 
The following proposition shows that $f_i(x(t))$ has an upper bound $f_i(x(t_0))$. 

\begin{lem}\label{lem:boundedsolution}
	Let $x: [t_0, +\infty) \to \mathbb{R}^n$ be a global solution to \cref{eq:Cauchy-Problem} with initial condition \((x(t_0),\dot x (t_0))=(x_0,0) \). Then, for all $i = 1, \dots, m$ and all $t \in [t_0, +\infty)$, we have
	\begin{equation}\label{eq:level-bounded-x0} 
	f_i(x(t)) \leq f_i(x_0) .
	\end{equation}
	$\rm i.e.,$ $x(t) \in \mathcal{L}(F, F(x_0))$ for $t \geq t_0$. Furthermore,  $x(t)$ is a bounded solution.
\end{lem}
\begin{proof}
Based on \cref{lem:energyfunctionW}, using the non-increasing property of $\mathcal{W}_i$, it is straightforward to obtain \cref{eq:level-bounded-x0}. Furthermore, combining this with the boundedness of  $\mathcal{L}(F, F(x_0))$ given in \cref{assum:levelset}, the boundedness of $x(t)$ is also proved. \qed
\end{proof}

%\subsection{Fast convergence of function values}

\begin{definition} Let \(x: [t_0, +\infty) \to \mathbb{R}^n\) be a global solution of \cref{eq:Cauchy-Problem} and \(z\in \R^n\). For \(t \ge t_0\), define the Lyapunov function:
	\begin{equation}\label{eq:Lyapunov2}
    \begin{aligned}
	\mathcal E_z(t)&=\frac{t^2}{2(\alpha -1)}\min_{i=1,\cdots,m}\Big(f_i(x(t))-f_i(z)\Big)\\&\quad +\frac{(\alpha -3)t^2}{4(\alpha -1)^2}\|\dot x(t)\|^2+\frac12\left\|x(t)-z+\frac{t}{\alpha -1}\dot x(t)\right\|^2,
	\end{aligned}
    \end{equation}

\end{definition}

\begin{lem}\label{lem:LyapunovInequal}
	Let \(x: [t_0, +\infty) \to \mathbb{R}^n\) be a global solution of \cref{eq:Cauchy-Problem} and \(z\in \R^n\). For $p\geq 1$, we have $\mathcal E(\cdot)$ is differentiable for almost all $t \ge t_0$, and
	\begin{equation*}
	\frac{d}{dt}\mathcal E_z(t)\leq \frac{(3-\beta )}{2(\alpha -1)}t\|\dot x(t)\|^2+\frac{\alpha -\beta }{(\alpha -1)t^{p-1}}\|\dot x(t)\|\|x(t)-z\|.
	\end{equation*}
    for almost all  $t\ge t_0$. 
\end{lem}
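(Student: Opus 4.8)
The plan is to differentiate $\mathcal E_z$ in the almost-everywhere sense and then use the second line of \cref{eq:Cauchy-Problem} to eliminate the second-order terms. By \cref{lem:diffequaldiffi}, for almost every $t$ there is an index $i=i(t)$ realizing the minimum, with $\frac{d}{dt}\min_j\bigl(f_j(x(t))-f_j(z)\bigr)=\langle\nabla f_i(x(t)),\dot x(t)\rangle$; hence it suffices to differentiate the smooth expression $\mathcal E_z^i$ from \cref{eq:Lyapunov1} for that index. A direct computation of the three summands produces four types of terms: a zeroth-order term $\frac{t}{\alpha-1}(f_i(x)-f_i(z))$, cross terms $\langle x-z,\dot x\rangle$, the $\|\dot x\|^2$ terms, and the crucial terms containing $\ddot x$, namely $\langle\dot x,\ddot x\rangle$ and $\langle x-z,\ddot x\rangle$. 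Collecting coefficients, the two $\langle\dot x,\ddot x\rangle$ contributions combine to coefficient $\frac{t^2}{2(\alpha-1)}$, while $\langle x-z,\ddot x\rangle$ carries coefficient $\frac{t}{\alpha-1}$.

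The core of the argument is to rewrite $\ddot x$ via the projection equation. Writing $K=C(x(t))+\ddot x(t)+r(t,x,\dot x)$, the equation reads $\proj_K(0)=-\frac{\alpha}{t}\dot x$, so there is a selection $g^*\in C(x(t))$ with $\ddot x=-\frac{\alpha}{t}\dot x-g^*-r$, and the variational characterization of the projection yields $\langle\dot x,g-g^*\rangle\le 0$ for every $g\in C(x(t))$. Substituting this expression into the $\langle\dot x,\ddot x\rangle$ and $\langle x-z,\ddot x\rangle$ terms produces three cancellations: the cross terms $\langle x-z,\dot x\rangle$ cancel exactly; the term $\frac{t^2}{2(\alpha-1)}\langle\nabla f_i(x),\dot x\rangle$ is absorbed by $-\frac{t^2}{2(\alpha-1)}\langle\dot x,g^*\rangle$ using $\langle\dot x,\nabla f_i(x)\rangle\le\langle\dot x,g^*\rangle$ (valid since $\nabla f_i(x)\in C(x(t))$); and the zeroth-order term cancels against $-\frac{t}{\alpha-1}\langle x-z,g^*\rangle$ by convexity. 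For this last step I will write $g^*=\sum_j\theta_j\nabla f_j(x)$ with $\theta\in\Delta^m$ and use $\langle\nabla f_j(x),x-z\rangle\ge f_j(x)-f_j(z)\ge f_i(x)-f_i(z)$, so that the convex combination gives $\langle g^*,x-z\rangle\ge f_i(x)-f_i(z)$.

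After these cancellations the only surviving genuine terms are the $\|\dot x\|^2$ terms and the $r$-terms. The $\|\dot x\|^2$ coefficients add to $\frac{3-\alpha}{2(\alpha-1)}t$ (the numerator simplifies to $-(\alpha-1)(\alpha-3)$). The remaining $r$-terms are $-\frac{t^2}{2(\alpha-1)}\langle\dot x,r\rangle$ and $-\frac{t}{\alpha-1}\langle x-z,r\rangle$; inserting $r=\frac{\alpha-\beta}{t^p}\frac{\|\dot x\|}{\|\proj_{C(x)}(0)\|}\proj_{C(x)}(0)$ and applying Cauchy--Schwarz together with $\alpha\ge\beta$ bounds the first by $\frac{(\alpha-\beta)t^{2-p}}{2(\alpha-1)}\|\dot x\|^2$ and the second by $\frac{\alpha-\beta}{(\alpha-1)t^{p-1}}\|\dot x\|\,\|x-z\|$, which is already the second term of the claim. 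Finally, combining the two $\|\dot x\|^2$ contributions and using $t^{2-p}\le t$ for $p\ge 1$ and $t\ge t_0\ge 1$ gives $\frac{3-\alpha}{2(\alpha-1)}t\|\dot x\|^2+\frac{(\alpha-\beta)t^{2-p}}{2(\alpha-1)}\|\dot x\|^2\le\frac{3-\beta}{2(\alpha-1)}t\|\dot x\|^2$, which is exactly the first term.

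The main obstacle I anticipate is the bookkeeping in the projection step: correctly extracting the selection $g^*$ and its two uses (the variational inequality against $\dot x$, and the convexity inequality giving $\langle g^*,x-z\rangle\ge f_i(x)-f_i(z)$), and verifying that $r$ is well defined along the trajectory, i.e.\ $\proj_{C(x(t))}(0)\neq 0$, which holds because we work on $\mathbb{R}^n\setminus\mathcal P_w$ where $x(t)$ is not Pareto critical. The rest is routine differentiation and Cauchy--Schwarz.
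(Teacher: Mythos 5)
Your proposal is correct and follows essentially the same route as the paper's proof: differentiate $\mathcal E_z^i$ termwise, substitute $\ddot x$ through the projection equation and its variational characterization, cancel the zeroth-order term via convexity and the minimizing index from \cref{lem:diffequaldiffi}, and bound the $r$-terms by Cauchy--Schwarz together with $t^{2-p}\le t$. The only cosmetic differences are that you rederive the projection inequality inline (with the explicit selection $g^*$) where the paper reuses \cref{eq:projectioninequality}, and you invoke \cref{lem:diffequaldiffi} at the start rather than at the end.
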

{\it Proof\ } The proof of this lemma is based on simple inequality techniques, which we defer to the \cref{appendix:ProofofLyapunov}.\qed 

\begin{thm} \label{thm:propositionofmeritfunction}
	 Let \(x: [t_0, +\infty) \to \mathbb{R}^n\) be a global solution of \cref{eq:Cauchy-Problem} with $p>1$. Then, as \(t \to +\infty\),
	\begin{itemize}
		\item[\(\rm (i)\)] \(\|\dot{x}(t)\| = {O}(1/t)\) if \(\alpha \ge \beta \geq 3\);
		
		\item[\(\rm (ii)\)] \(\varphi(x(t)) = {O}(1/t^{2})\) if  \(\alpha \ge \beta \ge 3\);
		
		\item[\(\rm (iii)\)] \(t \|\dot{x}(t)\|^2 \in L^1( [t_0, +\infty),\R)\) if $\alpha >\beta >3$.
	\end{itemize}
\end{thm}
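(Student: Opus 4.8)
The plan is to prove all three items by establishing a single uniform bound on the Lyapunov function $\mathcal{E}(t)$ defined in \cref{eq:Lyapunov3}. Since $\alpha>\beta\ge 3$ forces $\alpha>3$, both $\frac{t^2}{2(\alpha-1)}\varphi(x(t))$ and $\frac{(\alpha-3)t^2}{4(\alpha-1)^2}\|\dot x(t)\|^2$ are nonnegative (using $\varphi\ge 0$ from \cref{thm:weakpareto}), so $\mathcal{E}(t)$ dominates each of them. Hence a uniform bound $\mathcal{E}(t)\le M$ immediately gives $\|\dot x(t)\|^2\le \frac{4(\alpha-1)^2 M}{(\alpha-3)t^2}$ and $\varphi(x(t))\le \frac{2(\alpha-1)M}{t^2}$, i.e.\ items (i) and (ii). By \cref{lem:boundedsolution} (the section's initial data $\dot x(t_0)=0$) the trajectory stays in the bounded level set $\mathcal{L}(F,F(x_0))$, so \cref{thm:boundedz} guarantees that the family $\{z^*(t)\}$ is bounded.

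The main obstacle is that \cref{lem:LyapunovInequal} only controls $\frac{d}{dt}\mathcal{E}_z(t)$ for a \emph{fixed} anchor $z$, whereas $\mathcal{E}(t)=\mathcal{E}_{z^*(t)}(t)$ involves the moving point $z^*(t)$. I would circumvent this by freezing the anchor at the endpoint: fix an arbitrary $T>t_0$, set $z=z^*(T)$, and integrate the inequality of \cref{lem:LyapunovInequal} over $[t_0,T]$. Because $z^*(T)$ realizes the supremum defining $\varphi(x(T))$, we have $\mathcal{E}_{z^*(T)}(T)=\mathcal{E}(T)$, so
\begin{equation*}
\mathcal{E}(T)\le \mathcal{E}_{z^*(T)}(t_0)+\int_{t_0}^T\left[\frac{3-\beta}{2(\alpha-1)}s\|\dot x(s)\|^2+\frac{\alpha-\beta}{(\alpha-1)s^{p-1}}\|\dot x(s)\|\,\|x(s)-z^*(T)\|\right]ds.
\end{equation*}
Here $\beta\ge 3$ makes the first integrand $\le 0$, so it may be dropped. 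Since $x(s)$ lies in the bounded level set and $\{z^*(T)\}$ is bounded, $\|x(s)-z^*(T)\|\le D$ uniformly, and (using $\dot x(t_0)=0$, $x(t_0)=x_0$ fixed together with boundedness of $f_i(z^*(T))$) also $\mathcal{E}_{z^*(T)}(t_0)\le M_0$ uniformly in $T$. This leaves
\begin{equation*}
\mathcal{E}(T)\le M_0+\frac{D(\alpha-\beta)}{\alpha-1}\int_{t_0}^T s^{-(p-1)}\|\dot x(s)\|\,ds.
\end{equation*}

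To close the estimate I would feed the lower bound $\|\dot x(s)\|\le \frac{2(\alpha-1)}{\sqrt{\alpha-3}}\,s^{-1}\sqrt{\mathcal{E}(s)}$ (from the $\|\dot x\|^2$-term of $\mathcal{E}$) into the last integral, yielding the Bihari-type self-referential inequality
\begin{equation*}
\mathcal{E}(T)\le M_0+K\int_{t_0}^T s^{-p}\sqrt{\mathcal{E}(s)}\,ds,\qquad K:=\frac{2D(\alpha-\beta)}{\sqrt{\alpha-3}}.
\end{equation*}
Denoting the right-hand side by $G(T)$, one has $G'(T)=K T^{-p}\sqrt{\mathcal{E}(T)}\le K T^{-p}\sqrt{G(T)}$, hence $\frac{d}{dT}\sqrt{G(T)}\le \tfrac12 K T^{-p}$; since $p>1$ gives $\int_{t_0}^\infty s^{-p}\,ds=\frac{t_0^{1-p}}{p-1}<\infty$, integration yields $\sqrt{G(T)}\le \sqrt{M_0}+\frac{K t_0^{1-p}}{2(p-1)}$. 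Thus $\mathcal{E}(T)\le G(T)$ is uniformly bounded, which proves (i) and (ii); it is precisely here that $p>1$ is essential.

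For (iii) I would re-run the anchored inequality \emph{without} discarding the first integrand. With $\beta>3$ strictly, $\frac{\beta-3}{2(\alpha-1)}>0$, and using $\mathcal{E}(T)\ge 0$ together with the boundedness $\mathcal{E}\le M$ just obtained (which makes $\int_{t_0}^\infty s^{-(p-1)}\|\dot x(s)\|\,ds\le \frac{2(\alpha-1)\sqrt M}{\sqrt{\alpha-3}}\int_{t_0}^\infty s^{-p}\,ds<\infty$), I would rearrange to
\begin{equation*}
\frac{\beta-3}{2(\alpha-1)}\int_{t_0}^T s\|\dot x(s)\|^2\,ds\le M_0+\frac{D(\alpha-\beta)}{\alpha-1}\int_{t_0}^\infty s^{-(p-1)}\|\dot x(s)\|\,ds<\infty,
\end{equation*}
uniformly in $T$. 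Letting $T\to+\infty$ gives $t\|\dot x(t)\|^2\in L^1([t_0,+\infty),\mathbb{R})$, which is (iii). The delicate point throughout is the anchoring device that legitimately replaces the moving $z^*(t)$ by the frozen $z^*(T)$ while preserving equality at the right endpoint.
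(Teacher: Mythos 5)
Your proposal is correct and follows essentially the same route as the paper: the frozen-anchor device (taking $z=z^*(T)$ in \cref{lem:LyapunovInequal}, integrating over $[t_0,T]$, using $\mathcal{E}_{z^*(T)}(T)=\mathcal{E}(T)$ together with uniform bounds from the bounded level set and the boundedness of $z^*$) is exactly what the paper does in \cref{eq:lypunov-1,eq:Inequal_19}, and your Bihari step is precisely the content of the paper's \cref{lem:lemmaA3}, which the paper applies with unknown $h(t)=t\|\dot x(t)\|$ to get (i), then (ii), then the same rearrangement for (iii). The only cosmetic difference is that your Gronwall unknown is $\mathcal{E}(T)$ itself, whose integrand $\sqrt{\mathcal{E}(s)}$ involves $z^*(s)$ and hence tacitly assumes measurability of $s\mapsto z^*(s)$, whereas the paper keeps the manifestly measurable quantity $s\|\dot x(s)\|$ inside the integral; your argument is repaired in one line by passing to $h(s)=s\|\dot x(s)\|$ via $\mathcal{E}(s)\ge\frac{\alpha-3}{4(\alpha-1)^2}h(s)^2$ before invoking Gronwall.
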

\begin{proof}
	By the \cref{lem:LyapunovInequal}, we have
	\begin{equation}\label{eq:lypunov-1}
	\begin{aligned}
	\mathcal E_z(t)&-\mathcal E_z(t_0)\\&\leq -\frac{(\beta -3)}{2(\alpha -1)}\int_{t_0}^ts\|\dot x(s)\|^2ds+\frac{\alpha -\beta }{\alpha -1}\int_{t_0}^t\frac{\|\dot x(s)\|\|x(s)-z\|}{s^{p-1}}ds\\
	&\le -\frac{\beta -3}{2(\alpha -1)}\int_{t_0}^t s\|\dot x(s)\|^2ds+\frac{\alpha -\beta }{\alpha -1}\cdot \Big(R+\|z\|\Big)\int_{t_0}^t\frac{\|\dot x(s)\|}{s^{p-1}}ds,
	\end{aligned}
	\end{equation}
	with $R$ defined in \cref{eq:Levelsetbounded}. Take $z=z^*(t)$ as shown in \cref{thm:boundedz}., we get 
	\begin{equation}\label{eq:Inequal_19}
	\begin{aligned}
	\frac{t^2}{2(\alpha -1)}\varphi(x(t))&+\frac{(\alpha -3)t^2}{4(\alpha -1)^2}\|\dot x(t)\|^2+\frac{\beta -3}{2(\alpha -1)}\int_{t_0}^t s\|\dot x(s)\|^2ds\\&\le \mathcal E_{z^*(t)}(t)+\frac{\beta -3}{2(\alpha -1)}\int_{t_0}^t s\|\dot x(s)\|^2ds\\
    &\le \mathcal E_{z^*(t)}(t_0)+\frac{2R(\alpha -\beta )}{\alpha -1}\int_{t_0}^t \frac{\|\dot x(s)\|}{s^{p-1}}ds\\
	&\le \frac{t_0^2}{2(\alpha-1)}\varphi(x_0)+2R^2+\frac{2R(\alpha -\beta )}{\alpha -1}\int_{t_0}^t \frac{\|\dot x(s)\|}{s^{p-1}}ds.
	\end{aligned}
	\end{equation}
	Furthermore, since $\varphi(x(t))\ge 0$,  we have
	\begin{equation}\label{eq:thm3.1-21}
	\frac{1}{2}t^2\|\dot x(t)\|^2+\frac{\beta -3}{2(\alpha -1)}\int_{t_0}^t s\|\dot x(s)\|^2ds\le \frac12C_1^2+\int_{t_0}^t\frac{C_2}{s^{p}}s\|\dot x(s)\|ds,
	\end{equation}
	where $\frac12C_1^2=\frac{2(\alpha -1)^2}{\alpha -3}(\frac{t_0^2}{2(\alpha-1)}\varphi(x_0)+2R^2)$ and $C_2=\frac{4R(\alpha -\beta )(\alpha -1)}{\alpha -3}$. By the \cref{lem:lemmaA3}, we have
	\begin{equation}\label{eq:tilde-C1}
	t\|\dot x(t)\|\le C_1+\int_{t_0}^t\frac{C_2}{s^{p}}ds\le C_1+\int_{t_0}^{+\infty}\frac{C_2}{s^{p}}ds:=\widetilde{C_1},
	\end{equation}
	and then, we have $\|\dot x(t)\|\le \frac{\widetilde {C_1}}{t}$, $\rm i.e.$ \(\|\dot x(t)\|=O(1/{t})\). By combining \cref{eq:Inequal_19,eq:thm3.1-21,eq:tilde-C1} , we obtain
	\begin{equation*}
	\frac{t^2}{2}\varphi (x(t))\le \frac{t_0^2}{2}\varphi(x_0)+2(\alpha -1)R^2+{2R(\alpha -\beta )}\int_{t_0}^{+\infty }\frac{\widetilde C_1}{t^p}dt<+\infty ,
	\end{equation*}
	which implies that $\varphi(x(t))=O(1/t^2)$. If $\beta >3$, according to \cref{eq:thm3.1-21,eq:tilde-C1}, we have
	\begin{equation*}
	\frac{\beta-3}{2(\alpha -1)}\int_{t_0}^{+\infty }t\|\dot x(t)\|^2dt \le 4\frac{\alpha -\beta }{\alpha -1}R^2\int_{t_0}^{+\infty }\frac{\widetilde C_1}{t^{p-1}}dt<+\infty ,
	\end{equation*}
	which implies that \(t\|\dot x(t)\|^2\in L^1([t_0,+\infty),\R )\). \qed
\end{proof}
\begin{thm}\label{thm:theorem4.2}
	 Let \(x(\cdot)\) be a global solution of \cref{eq:Cauchy-Problem} with \(p = 1\), \(\alpha \ge \beta \ge 3\).  Then, as \(t \to +\infty\), we have $\varphi(x(t))=O(\ln^2 t/t^2)$. 
\end{thm}
\begin{proof}
	Similar to \cref{eq:lypunov-1}, we have
	\begin{equation*}
	\begin{aligned}
	\mathcal E_z(t)-\mathcal E_z(t_0)
	&\le \frac{\alpha -\beta }{\alpha -1}\cdot 2\Big(R+\|z\|\Big)\int_{t_0}^t\|\dot x(s)\|ds .
	\end{aligned}
	\end{equation*}
	Furthermore, Taking $z=z^*(t)$ and by the simple computation, we have
	\begin{equation}\label{eq:lyapunov-2}
	\frac{1}{2}t^2\|\dot x(t)\|^2\le\frac{\alpha -1}{\alpha -3}t^2\varphi(x(t))+\frac{1}{2}t^2\|\dot x(t)\|^2\le  \frac12C_1^2+\int_{t_0}^t\frac{C_2}{s}s\|\dot x(s)\|ds,
	\end{equation}
	where $\frac12C_1^2=\frac{2(\alpha -1)^2}{\alpha -3}(\frac{t_0^2}{2(\alpha-1)}\varphi(x_0)+2R^2)$ and $C_2=\frac{4R(\alpha -\beta )(\alpha -1)}{\alpha -3}$. By the \cref{lem:lemmaA3}, we have
	\begin{equation*}
	t\|\dot x(t)\|\le C_1+\int_{t_0}^t\frac{C_2}{s}ds\le C_1+C_2\ln{t}+C_2|\ln{t_0}|,
	\end{equation*}
	then, 
	\begin{equation*}
	\|\dot x(t)\|\le\left(  {\sup_{t\ge t_0}\left (\frac{C_1+C_2|\ln{t_0}|}{\ln{t}}\right)+C_2}\right)\frac{\ln t}{t}:=\widetilde{C_2}\frac{\ln t}{t}.
	\end{equation*}
	${\rm i.e.}\  \|\dot x(t)\|=O(\ln{t}/t)$. Moreover, combineing with \cref{eq:lyapunov-2}, we have
	\begin{equation*}
    \begin{aligned}  
	\frac{\alpha -1}{\alpha -3}t^2\varphi (x(t))&\le \frac12C_1^2+C_2\widetilde{C_2}\int_{t_0}^t\frac{\ln s}{s}ds\\
	&\le \frac12C_1^2+C_2\widetilde{C_2}\ln^2{t}+C_2\widetilde{C_2}\ln^2t_0.
	\end{aligned} 
    \end{equation*}
	By multipling the $\frac{\alpha -3}{(\alpha -1)t^2}$, we complete the proof. \qed 
\end{proof}

%\subsection{Convergence of trajectory solutions}
\begin{thm}\label{thm:WeakParetoPoint} 
	Let \(\alpha>\beta  > 3 \), and let \(x: [t_0, +\infty) \to \mathbb{R}^n\) be a bounded solution of \cref{eq:Cauchy-Problem}. 	 Then, \(x(t)\) converges to a weakly Pareto optimal solution of \cref{eq:MOP}.
\end{thm}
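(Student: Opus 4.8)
The plan is to invoke Opial's Lemma (\cref{lem:Opial}), with the target set chosen to be the weak Pareto points sharing the limiting objective value of the trajectory. The crucial preliminary observation is that each objective value converges: by \cref{lem:energyfunctionW} the energy $\mathcal{W}_i(t)=f_i(x(t))+\frac12\|\dot x(t)\|^2$ is non-increasing and bounded below, so $\mathcal{W}_i(t)\downarrow f_i^\infty$ for some $f_i^\infty\in\mathbb{R}$; since \cref{thm:propositionofmeritfunction}(i) gives $\|\dot x(t)\|=O(1/t)\to 0$, this forces $f_i(x(t))\to f_i^\infty$ for every $i$. Writing $f^\infty=(f_1^\infty,\dots,f_m^\infty)$, I would set $V:=\mathcal{P}_w\cap\{z\in\mathbb{R}^n:F(z)=f^\infty\}$.

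Condition (i) of Opial is the straightforward part. Since $x(\cdot)$ is bounded (\cref{lem:boundedsolution}) it possesses limit points; if $x(t_k)\to x^*$, then continuity gives $F(x^*)=f^\infty$, while \cref{thm:propositionofmeritfunction}(ii) yields $\varphi(x(t_k))=O(1/t_k^2)\to 0$, so the lower semicontinuity of $\varphi$ (\cref{thm:weakpareto}(iii)) forces $\varphi(x^*)=0$, i.e.\ $x^*$ is a weak Pareto point (\cref{thm:weakpareto}(ii)). Hence every limit point lies in $V$, and in particular $V\neq\varnothing$.

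Condition (ii) is the main obstacle. Fixing $z\in V$ and setting $h(t)=\frac12\|x(t)-z\|^2$, the goal is to verify the hypotheses of \cref{lem:limitexistlem}. Using the multiplier form of the equation, namely $\ddot x(t)+\frac{\alpha}{t}\dot x(t)+\sum_{i}\theta_i(t)\nabla f_i(x(t))+r(t,x,\dot x)=0$ with $\theta(t)\in\Delta^m$, a direct computation cancels the first-order terms and gives, almost everywhere,
\[
t\ddot h(t)+\alpha\dot h(t)=t\|\dot x(t)\|^2+t\sum_{i}\theta_i(t)\langle z-x(t),\nabla f_i(x(t))\rangle-t\langle x(t)-z,r(t,x,\dot x)\rangle.
\]
The heart of the argument is controlling the middle term. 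Convexity gives $\langle z-x,\nabla f_i(x)\rangle\le f_i(z)-f_i(x)=f_i^\infty-f_i(x)$ (this is exactly where $F(z)=f^\infty$ is used), and the monotonicity $\mathcal{W}_i(t)\ge f_i^\infty$ yields $f_i^\infty-f_i(x(t))\le\frac12\|\dot x(t)\|^2$; since $\theta(t)\in\Delta^m$, the middle term is at most $\frac{t}{2}\|\dot x(t)\|^2$. The last term is bounded by $\frac{\alpha-\beta}{t^{p-1}}\|x(t)-z\|\,\|\dot x(t)\|$ because $\|r\|=\frac{\alpha-\beta}{t^p}\|\dot x\|$. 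Thus $t\ddot h+\alpha\dot h\le g(t)$ with $g(t)=\frac{3t}{2}\|\dot x(t)\|^2+\frac{\alpha-\beta}{t^{p-1}}\|x(t)-z\|\,\|\dot x(t)\|\ge 0$; here $t\|\dot x\|^2\in L^1$ by \cref{thm:propositionofmeritfunction}(iii) (this is where $\beta>3$ enters), while the second summand is $O(t^{-p})\in L^1$ since $p>1$, $\|x-z\|$ is bounded, and $\|\dot x\|=O(1/t)$. Because $h\ge 0$ is bounded below and $\alpha>\beta>3>1$, \cref{lem:limitexistlem} gives that $\lim_{t\to\infty}h(t)$, hence $\lim_{t\to\infty}\|x(t)-z\|$, exists. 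With both Opial conditions verified, $x(t)$ converges to some $x^\infty\in V\subseteq\mathcal{P}_w$, which is the desired weak Pareto point.
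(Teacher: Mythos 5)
Your proof is correct and takes essentially the same route as the paper: both use the energy decay from \cref{lem:energyfunctionW} together with $\|\dot x(t)\|=O(1/t)$ to obtain $f_i(x(t))\to f_i^\infty$, then study $h_z(t)=\frac12\|x(t)-z\|^2$, derive the same differential inequality with the same integrable majorant $g(t)=\frac32 t\|\dot x(t)\|^2+\frac{\alpha-\beta}{t^{p-1}}\|\dot x(t)\|\|x(t)-z\|$ (integrability resting on $\beta>3$ via \cref{thm:propositionofmeritfunction}(iii) and on $p>1$), and conclude via \cref{lem:limitexistlem} and Opial's \cref{lem:Opial}. The only cosmetic difference is your choice of target set $V=\mathcal{P}_w\cap\{z: F(z)=f^\infty\}$ versus the paper's sublevel set $\{z: F(z)\le f^\infty\}$, which merely relocates the lower-semicontinuity-of-$\varphi$ argument from the end of the proof (paper) into the verification of Opial's condition (i) (yours).
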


\begin{proof}
	We define the set,
	\[
	V := \{ z \in \mathbb{R}^n : f_i(z) \leq f_i^\infty, \text{ for } i = 1, \dots, m \},
	\]
	where \( f_i^\infty = \lim_{t\to \infty } f_i(x(t))=\lim_{t \to \infty} \mathcal{W}_i(t) \)with existence is guaranteed by \cref{lem:energyfunctionW} and \cref{thm:propositionofmeritfunction}(i).
	
	Since \( x(t) \) is bounded, there exists a limit point \( x^\infty \in \mathbb{R}^n \). Therefore, there exists a sequence \(\{x(t_k)\}_{k \geq 0}\) with \( t_k \to +\infty \) and \( x(t_k) \to x^\infty \) as \( k \to \infty \). Since the objective functions are lower semicontinuous for \( i = 1, \dots, m \), we have
	\[
	f_i(x^\infty) \leq \liminf_{k \to +\infty} f_i(x(t_k)) = \lim_{k \to \infty} f_i(x(t_k))  = f_i^\infty,
	\]
	and hence \( x^\infty \in V \),  which implies that \( V \neq \varnothing \) and any limit point of \( x(t) \) belongs to \( V \).
	
	Let \( z \in V \), and we define \( h_z(t) = \frac{1}{2} \|x(t) - z\|^2 \). The first and second derivatives of \( h_z(t) \) are 
	$\dot{h}_z(t) = \langle x(t) - z, \dot{x}(t) \rangle$ and $
	\ddot{h}_z(t) = \langle x(t) - z, \ddot{x}(t) \rangle + \|\dot{x}(t)\|^2$ 
	for almost all \( t \in [t_0, +\infty) \), respectively. Therefore
	\[
	\ddot{h}_z(t) + \frac{\alpha}{t} \dot{h}_z(t) = \left\langle x(t) - z, \ddot{x}(t) + \frac{\alpha}{t} \dot{x}(t) \right\rangle + \|\dot{x}(t)\|^2.
	\]
	By the definition of  \cref{eq:MAVNG}, %and suppose that 
	%\begin{equation*}
	%\proj_{C(x(t))+r(t,x(t),\dot x(t))}(0)=\sum_{i=1}^m\theta _i(t)\nabla f_i(x(t))+r(t,x(t),\dot x(t)),
	%\end{equation*}	
	%where $\sum_{i=1}^m\theta_i(t)=1$. Moreover, we have
    there exist $(\theta_1(t),\cdots,\theta_m(t))\in \Delta^m$ such that 
	\begin{equation*}
	\begin{aligned}
	t \ddot{h}_z(t) + \alpha \dot{h}_z(t) 
	&\le t\bigg<z-x(t),\sum_{i=1}^m\theta_i(t)\nabla f_i(x(t))+r(t,x,\dot x)\bigg>+t\|\dot x(t)\|^2.%\\&\quad 
    %+t\langle x(t)-z, -r(t,x,\dot x)\rangle +t\|\dot x(t)\|^2.
	\end{aligned}
	\end{equation*}
	 Note that
	\begin{equation*}
    \begin{aligned} 
	\Big<z-x(t),\nabla f_i(x(t))\Big>&\le f_i(z)-f_i(x(t))\le f_i^\infty -f_i(x(t))\le \frac12\|\dot x(t)\|^2,%&\le f_i(x(t))+\frac12\|\dot x(t)\|^2-f_i(x(t)),
    \end{aligned} 
	\end{equation*}
	and hence,  we have
	\begin{equation}\label{eq:function-inter}
	\begin{aligned}
	t \ddot{h}_z(t) + \alpha \dot{h}_z(t)&\le \frac32t\|\dot x(t)\|^2+\frac{\alpha -\beta }{t^{p-1}}\|\dot x(t)\|\|x(t)-z\|\\
	&\le \frac32t\|\dot x(t)\|^2+\frac{2R(\alpha -\beta )\widetilde C_1}{t^p}:=g(t),
	\end{aligned}
	\end{equation}
	where $\widetilde C_1$ is defined as in \cref{eq:tilde-C1}. Since \( t \|\dot{x}(t)\|^2 \in L^1([t_0, +\infty),\R) \) and \( p > 1 \), it follows that \( g(t) \in L^1([t_0, +\infty),\R) \). Therefore,  \cref{eq:function-inter} implies 
	$\lim_{t \to \infty} \|x(t) - z\|$ exists by \cref{lem:limitexistlem}.
	Moreover, applying \cref{lem:Opial} and \cref{thm:propositionofmeritfunction}, we get \( \lim_{t \to \infty} x(t) = x^\infty \) and \( \varphi(x(t)) \to 0 \) as \( t \to \infty \). Since \( \varphi(\cdot) \) is lower semicontinuous, we have
	\[
	\varphi(x^\infty) \leq \liminf_{k \to +\infty} \varphi(x(t_k)) = 0.
	\]
	Hence,  \( x^\infty \) is a weakly Pareto optimal solution of \cref{eq:MOP} by \cref{thm:weakpareto}. \qed 
\end{proof}

\section{Algorithm based on \cref{eq:MAVNG}}	    \label{sec:Algorithm}
In this section, we present the discretized iterative scheme for \cref{eq:MAVNG} and an iterative scheme similar to this form, the latter of which can be regarded as an extension of \cref{eq:FISCnes} in multiobjective optimization. 

By directly extending the discretized Lyapunov function from \cite{wang2021search} and integrating the approach from \cite{sonntag2024fastNestrovAlgorithm,tanabe2022globally}, we derive the convergence rate $\varphi(x_k) = {O}(\ln^2 k / k^2)$ (\cref{thm:sequenceConvergence}). Moreover, we demonstrate that all limit points are weakly Pareto optimal solutions (\cref{thm:Convergenceosequence}).
\subsection{Algorithm}

By discretizing \cref{eq:MAVNG}, we can obtain the following iterative scheme. The detailed discussion of the discretization process is deferred to \cref{appendix:subsection-algorithm}.
\begin{equation}\label{eq:Discret-MAVNG}
\left\{\begin{aligned}
y_k&=x_k+\frac{k-1}{k+\alpha -1}\pi_k,\\
x_{k+1}&=y_k-s\frac{k-1}{k+\alpha -1}\proj_{C(y_k)}\left(\frac{1}{s}\pi_k\right).
\end{aligned}\right.
\end{equation}
where $\pi_k =(x_k-x_{k-1})-\frac{\alpha -3}{k -1}\frac{\|x_k-x_{k-1}\|}{\|\proj_{C(x_k)}(0)\|}\proj _{C(x_k)}(0)$.  However, {  since the convergence of \cref{eq:Discret-MAVNG} cannot be effectively established, we adopt an approach similar to that in \cite{sonntag2024fastNestrovAlgorithm} and study the following iterative scheme, which is closely related to \cref{eq:Discret-MAVNG} and exhibits a Nesterov-style form:}
\begin{equation}\label{eq:MFISC-real}
\left\{\begin{aligned}
r_k &= \frac{\|x_k-x_{k-1}\|}{\|\proj_{C(x_k)}(0)\|}\proj _{C(x_k)}(0),\\
\pi_k &=\frac{k-1}{k+\alpha -1}(x_k-x_{k-1})-\frac{\alpha -3}{k+\alpha -1}r_k,\\
y_k&=x_k+\pi_k,\\
x_{k+1}&=y_k-s\proj_{C(y_k)}(\pi_k).
\end{aligned}\right.\tag{MFISC}
\end{equation}
{ This iterative scheme also represents a natural extension of \cref{eq:FISCnes} combined with \cref{eq:AccG} in  multiobjective optimization. Based on the iterative scheme \cref{eq:MFISC-real}, we propose the \cref{algo:MFISCnes2}}. 

{  In \cref{algo:MFISCnes2}, we choose $\|u_k\| < \varepsilon$ as the termination criterion because, in the convex case, it equivalently characterizes weak Pareto optimal solutions.% a feature not present in other current multiobjective accelerated methods. 
    The iterative process can be described as follows: we correct the momentum term $x_k - x_{k-1}$ using the normalized steepest descent direction, as shown in \cref{fig:optimization_process2}. Through this correction, the issue of slow convergence caused by the uncertainty of ascent and descent directions in the momentum term can be effectively mitigated, as shown in the numerical experiments.}
\begin{algorithm}[h]
	\caption{Multiobjective Fast Inertial
		Search Direction Correction Algorithm (MFISC)}
	\label{algo:MFISCnes2}
	\begin{algorithmic}[1] 
		\REQUIRE  	 Initial values: $\alpha\geq 3$, $x_0=x_1\in \R^n$ and $u_0=\infty $.\\
		~~~~~~~ Choose $0<s<\frac{1}{L}$  and $k_{\max}$, set $k=1$. \\
		\WHILE{$k<k_{\max}$}
		\STATE Compute $\xi^k\in \argmin_{\xi\in \Delta^m}\frac12\left\|\sum_{i=1}^m\xi_{i}^{k}\nabla f_i(x_k)\right\|^2$ and $u_k =\sum_{i=1}^m \xi_i^k \nabla f_i(x_k)$. 
		%\STATE Set $u_k =\sum_{i=1}^m \xi_i^k \nabla f_i(x_k)$
		\IF{ $\|u_k\|<\varepsilon$}
		\RETURN{$x_k$}	
		\ELSE
		\STATE Set $ \pi_k =\frac{k-1}{k+\alpha -1}(x_k-x_{k-1})-\frac{\alpha -3}{k+\alpha  -1}\frac{\|x_k-x_{k-1}\|}{\|u_k\|}u_k$.\\
		\STATE Set $y_k =x_k+\pi_k$.   
		\STATE Compute $\theta ^k\in \argmin_{\theta\in \R^m } \frac12\left\|s\sum_{i=1}^m\theta_{i}^{k}\nabla f_i(y_k)-\pi_k\right\|^2$. 
		\STATE Compute $ x_{k+1} = y_k -s \sum_{i=1}^m\theta_i^k\nabla f_i(x_k)$ and $k\to k+1$.
		\ENDIF
		\ENDWHILE		
	\end{algorithmic}
\end{algorithm}

\begin{figure}[h]
	\centering
\begin{tikzpicture}[
node distance=1.5cm and 1.8cm,
>=Stealth,
font=\small
]
% 定义节点
\node (xk2) {$x_{k-1}$};
\node[right=0.8 cm of xk2] (xk1) {$x_{k}$};
\node[below right=0.5cm and 0.3cm of xk1] (ybar) {$x_{k}-\frac{\alpha -3}{k+\alpha -1}\frac{\|x_k-x_{k-1}\|}{\|\proj_{C(x_k)}(0)\|}\proj_{C(x_k)}(0)$};
\node[right=of ybar] (yk) {$y_{k}$};
\node[above right= 3cm and 0.8cm of yk] (xk) {$x_{k+1}$};

% 绘制箭头及标注
\draw[->] (xk2) -- (xk1);
\draw[->, dashed] (xk1) -- (ybar);
\draw[->, dashed] (ybar) -- (yk);
\draw[->, dashed] (yk) -- (xk);
\draw[->] (xk1) -- (xk);
\end{tikzpicture}
 	\caption{MFISC iteration diagram}
\label{fig:optimization_process2}
\end{figure}
\subsection{Convergence analysis}\label{sec:AlgoforConvergence}

In this subsection, let $\{x_k\}$ and $\{y_k\}$ be the iterations  generated by \cref{algo:MFISCnes2},  we give some notations as follows:
\begin{equation*}
\begin{aligned}
\sigma_k(z)&:=\min_{i=1,\cdots,m} \Big(f_i(x_k)-f_i(z)\Big),\ z\in \R^n,\\
\Delta x_k &:=x_k-x_{k-1}.
\end{aligned}
\end{equation*}

\begin{lem}\label{lem:boundedsequence}
    Let $\{x_k\}$ be the iterations generated by \cref{algo:MFISCnes2}. Then for any $k \ge 0$ and for all $i = 1, \cdots, m$, we have  
    $$
    f_i(x_k) \le f_i(x_0).
    $$
\end{lem}
\begin{proof}
    See \cref{appendix:subsection-Convergence}. \qed 
\end{proof}

\begin{definition}\label{def:discrete-Lyapunov}
    Define the discrete Lyapunov function for $z\in \R^n$:
    \begin{equation}
    \begin{aligned} 
    \mathcal E_z(k)&:=\frac{2(k+\alpha -2)^2s}{\alpha -1}\sigma_k(z)\\&\qquad +2\left\|x_k-z+\frac{k-1}{\alpha -1}\Delta x_k\right\|^2+\frac{(\alpha -3)(k-1)^2}{(\alpha -1)^2}\|\Delta x_k\|^2.
    \end{aligned} 
    \end{equation}
\end{definition}
\begin{lem}\label{eq:Lyapunovk-Lyapunov1}
Let $\mathcal E_z(k)$ for $z\in \R^n$ be as shown in \cref{def:discrete-Lyapunov}, then we have
\begin{equation*}
\mathcal E_z(k)-\mathcal E_z(1 )\le \frac{\alpha -3}{\alpha -1}\phi_{1 }\|\Delta x_{1}\|^2+4\frac{\alpha -3}{\alpha -1}\sum_{j=2}^k \|\Delta x_{j-1}\|\|x_{j-1}-z\| -\frac{2s}{\alpha -1}k\sigma_k(z),
\end{equation*}

\end{lem}
\begin{proof}
    See \cref{appendix:subsection-Convergence}. \qed 
\end{proof} 
\begin{thm}\label{thm:sequenceConvergence}
	Let $\{x_k\}$ be the iterations generated by \cref{algo:MFISCnes2}, then we have
	\begin{equation}
	\varphi(x_k)=O\left(\frac{\ln^2k}{k^2}\right).
	\end{equation}
\end{thm}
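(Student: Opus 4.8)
The plan is to mimic the continuous $p=1$ analysis at the level of the discrete energy $\mathcal E(k)$ from \cref{lem:Lyapunovestimate}, and to read off the rate in two stages after summing the one-step estimate. Throughout I would fix the terminal index $K$ and freeze $z=z_K^*$, the minimizer attached to $x_K$, so that $\sigma_K(z_K^*)=\varphi(x_K)\ge 0$ and $\mathcal G_1(K)=\frac{2(K+\alpha-2)^2s}{\alpha-1}\varphi(x_K)\ge 0$. By \cref{coro:boundedsequence} the iterates lie in the bounded set $\mathcal L(F,F(x_0))$, and \cref{coro:sequenceofxk} keeps $z_K^*$ in the same set; hence $\|x_{k-1}-z_K^*\|\le 2R$ with $R$ as in \cref{eq:Levelsetbounded}, and $\mathcal E(1)$ is bounded by a constant independent of $K$ (recall $\Delta x_1=0$ since $x_1=x_0$).

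First I would sum the inequality of \cref{lem:Lyapunovestimate} over $k=2,\dots,K$. Because $\phi_{k-1}-2=\phi_k-4$, the two quadratic contributions $-\frac{\alpha-3}{\alpha-1}\phi_k\|\Delta x_k\|^2$ and $+\frac{\alpha-3}{\alpha-1}(\phi_{k-1}-2)\|\Delta x_{k-1}\|^2$ telescope into the endpoint term together with a strictly negative accumulation $-\frac{2(\alpha-3)}{\alpha-1}\sum_{k=1}^{K-1}\|\Delta x_k\|^2$ (the $k=1$ term vanishing). The cross terms contribute at most $\frac{8(\alpha-3)R}{\alpha-1}\sum_{k=1}^{K-1}\|\Delta x_k\|$, while the genuinely new feature, absent in continuous time, is the residual sum $-\frac{2s}{\alpha-1}\sum_{k=1}^{K-1}\sigma_k(z_K^*)$. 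I would dispose of it using the near-monotonicity of $\sigma_k$ supplied by \cref{coro:sigammore}: taking $k_1=k$, $k_2=K$ there and discarding the nonpositive sum gives $\sigma_k(z_K^*)\ge\varphi(x_K)-\frac{1}{2s}\|\Delta x_k\|^2\ge-\frac{1}{2s}\|\Delta x_k\|^2$, so that $-\frac{2s}{\alpha-1}\sum\sigma_k(z_K^*)\le\frac{1}{\alpha-1}\sum\|\Delta x_k\|^2$. This is swallowed by the negative accumulation above. Verifying that the resulting coefficient of $\sum\|\Delta x_k\|^2$ is nonpositive, i.e. checking the exact constants in $\phi_k$ against the admissible range of $\alpha$ (and possibly retaining the discarded nonpositive mass of \cref{coro:sigammore} to sharpen the constant), is the delicate bookkeeping point of the proof and the place where the argument is most easily derailed.

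After this absorption one is left with the clean master inequality $\frac{(\alpha-3)(K-1)^2}{(\alpha-1)^2}\|\Delta x_K\|^2\le\mathcal E(K)\le \mathcal E(1)+\frac{8(\alpha-3)R}{\alpha-1}\sum_{k=2}^{K}\|\Delta x_{k-1}\|$, having also dropped the nonnegative $\mathcal G_1(K)$, the middle square, and the negative endpoint term. The rate then comes out in two stages paralleling the $p=1$ continuous argument. Writing $u_{k}:=(k-1)\|\Delta x_{k}\|$ (the discrete analogue of $t\|\dot x(t)\|$) and $\|\Delta x_{k-1}\|=\frac{1}{k-2}u_{k-1}$ for $k\ge 3$ exposes a weight $\frac{1}{k-2}$ with $\sum_k\frac{1}{k-2}=O(\ln K)$; a self-referential discrete Gronwall estimate, the discrete counterpart of \cref{lem:lemmaA3} applied to $\tfrac12 u_K^2\le \text{const}+\sum_k\frac{C}{k-2}u_{k-1}$, then yields $u_K=(K-1)\|\Delta x_K\|=O(\ln K)$, i.e. $\|\Delta x_k\|=O(\ln k/k)$. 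Feeding this bound back into $\frac{2(K+\alpha-2)^2s}{\alpha-1}\varphi(x_K)=\mathcal G_1(K)\le \mathcal E(K)\le\mathcal E(1)+\sum_k\frac{C}{k-2}u_{k-1}$ turns the right-hand sum into $O\!\big(\sum_k\tfrac{\ln k}{k}\big)=O(\ln^2 K)$, and dividing by $(K+\alpha-2)^2$ gives $\varphi(x_K)=O(\ln^2K/K^2)$, as claimed. The main obstacle is thus twofold: neutralizing the discretization-induced $\sigma$-sum via \cref{coro:sigammore} and the telescoped negative mass, and correctly arranging the self-referential discrete Gronwall inequality with its logarithmic weight so that the bootstrap from $\|\Delta x_k\|=O(\ln k/k)$ to $\varphi(x_k)=O(\ln^2 k/k^2)$ closes.
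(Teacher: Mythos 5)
Your proposal is correct and follows essentially the same route as the paper's own proof: sum the one-step decrement of \cref{lem:Lyapunovestimate} with $z$ frozen at the terminal minimizer $z_k^*$, neutralize the resulting $\sigma_{j-1}(z_k^*)$ terms through \cref{coro:sigammore}, absorb the discretization corrections into the telescoped negative mass $-\tfrac{2(\alpha-3)}{\alpha-1}\sum\|\Delta x_{j-1}\|^2$, then apply the discrete Gronwall estimate of \cref{lem:Gronwelldiscrete} to get $k\|\Delta x_k\|=O(\ln k)$ and feed this back into the master inequality to conclude $\varphi(x_k)=O(\ln^2 k/k^2)$. The only caveat is the point you yourself flagged: your cruder absorption, performed after discarding the nonpositive double sum from \cref{coro:sigammore}, needs $2(\alpha-3)\ge 1$, i.e.\ $\alpha\ge 3.5$, so for $3<\alpha<3.5$ that discarded mass must be retained---exactly as you anticipated, and exactly what the paper does via the elementary bound $1+(l-1)\bigl(\bigl(\tfrac{l+\alpha-5}{l+\alpha-2}\bigr)^2-1\bigr)<2(\alpha-3)$ for all $l\ge 2$.
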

\begin{proof} 
	By the definition of \(\mathcal E_z(k)\), \(\mathcal E_z(1)\le 2(\alpha -1)s\varphi(x_0)+2\|x_0-z\|^2:=C(z)\) and \cref{eq:Lyapunovk-Lyapunov1}, we obtain  
	\begin{equation}\label{eq:main:sigmainequatity}
	\begin{aligned}
	&\frac{2s}{\alpha -1}\Big((k+\alpha -2)^2+k\Big)\sigma_k(z)+\frac{(\alpha -3)(k-1)^2}{(\alpha -1)^2}\|\Delta x_k\|^2\\
	\le \ & C(z)+ %\frac{\alpha -3}{\alpha -1}\phi_{k_\alpha }\|\Delta x_{k_\alpha }\|^2+
	4\frac{\alpha -3}{\alpha -1}\sum_{j=2}^k\|\Delta x_{j-1}\|\|x_{j-1}-z\|.
	\end{aligned}
	\end{equation}
	Let $z = z_k^{*}$  defined in \cref{coro:sequenceofxk},  $C(z_k^*)\le 2(\alpha -1)s\varphi(x_0)+4R^2:=C$. Then by \cref{assum:levelset,lem:boundedsequence} with $R=\sup_{z \in \mathcal L(f,f(x_0))}\|z\|$, we have
	\begin{equation*} 
	k^2\|\Delta x_k\|^2\le 4\frac{(\alpha -1)^2}{\alpha -3} C+\sum_{j=1}^{k}\frac{32\cdot R(\alpha -1)}j\cdot j\|\Delta x_j\|,\\
	\end{equation*}
	for $k\ge 1$. Further using the \cref{lem:Gronwelldiscrete}, we get:  
	\begin{equation*}
	\begin{aligned}
	k\|\Delta x_k\|&\le \sqrt{4\frac{(\alpha -1)^2}{\alpha -3} C}+\sum_{j=1}^{k}\frac{32\cdot R(\alpha -1)}{j}\le \widetilde C\cdot \ln k,\\
	%&\le \left[\frac{\left(\sqrt{4\frac{(\alpha -1)^2}{\alpha -3} |\mathcal E(1)|}+32R(\alpha -1)\right)}{\ln 2}+32\cdot R(\alpha -1)\right]\cdot \ln k\\
	%&:=C_0\cdot \ln k.
	\end{aligned}
	\end{equation*}
	where $\widetilde C=  \left[{\frac1{\ln 2}\left(\sqrt{4\frac{(\alpha -1)^2}{\alpha -3} C}+32R(\alpha -1)\right)}+32\cdot R(\alpha -1)\right]\cdot \ln k$. By \cref{eq:main:sigmainequatity} and note $\sum_{j=1}^k\frac{\ln j}{j}\le \frac12\ln ^2 k+\frac{\ln 2(1-\ln 2)}{2}$ for $k\ge 2$, we have
	\begin{equation} \label{eq:24}
	\begin{aligned} 
	\frac{2s}{\alpha -1}\Big((k+\alpha -2)^2+k\Big)\sigma_k(z_k^*)&\le C+
	4\frac{\alpha -3}{\alpha -1}\widetilde C\cdot 2R\sum_{j=1}^k\frac{\ln j}{j}\\
	&\le  \left[\frac{ C }{(\ln 2)^2} + \frac{8 \widetilde C R (\alpha - 3) }{ (\alpha - 1) \ln 2 }\right]\cdot \ln ^2 k.
	\end{aligned}
	\end{equation}
	%Therefore,  
	%\begin{equation}
	%\varphi(x_k)=\sigma_{k}(z_k^*)\le \frac{\alpha -1}{2s}\left[\frac{ |\mathcal{E}(1)| }{(\ln 2)^2} + \frac{8 C_0 R (\alpha - 3) }{ (\alpha - 1) \ln 2 }\right]\cdot \frac{\ln ^2 k}{(k+\alpha -2)^2+k}.
	%\end{equation}
	%This completes the proof.
    Since $\varphi(x_k)=\sigma_k(z_k^*)$, multiplying both sides of \cref{eq:24} by $\frac{\alpha -1}{2s}\cdot \frac{1}{(k+\alpha -2)^2+k}$ yields the conclusion of the theorem.\qed 
\end{proof}
\begin{thm}\label{thm:Convergenceosequence}
	Let $\{x_k\}$ be the iterations generated by \cref{algo:MFISCnes2}, then all limit points of ${x_k}$ are weakly Pareto optimal solutions of \cref{eq:MOP}.
\end{thm}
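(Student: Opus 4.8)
The plan is to combine the convergence rate already established in \cref{thm:sequenceConvergence} with the nonnegativity and lower semicontinuity of the merit function from \cref{thm:weakpareto}; no new estimate is needed. First I would argue that $\{x_k\}$ is bounded and therefore admits limit points. By \cref{coro:boundedsequence} we have $f_i(x_k)\le f_i(x_0)$ for all $i=1,\dots,m$ and all $k$, so $x_k\in\mathcal L(F,F(x_0))$, and this set is bounded by \cref{assum:levelset}. Hence every subsequence of $\{x_k\}$ has a convergent sub-subsequence, and in particular at least one limit point exists.

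Next, fix an arbitrary limit point $x^*$ and choose a subsequence $\{x_{k_j}\}$ with $x_{k_j}\to x^*$ as $j\to\infty$. From \cref{thm:sequenceConvergence} we have $\varphi(x_k)=O(\ln^2 k/k^2)$, and since $\ln^2 k/k^2\to 0$, this gives $\lim_{k\to\infty}\varphi(x_k)=0$; in particular $\lim_{j\to\infty}\varphi(x_{k_j})=0$.

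Finally, I would pass to the limit along the subsequence using the lower semicontinuity of $\varphi$ guaranteed by \cref{thm:weakpareto}(iii), together with its nonnegativity from \cref{thm:weakpareto}(i), to obtain
$$
0\le \varphi(x^*)\le \liminf_{j\to\infty}\varphi(x_{k_j})=0.
$$
Thus $\varphi(x^*)=0$, and by \cref{thm:weakpareto}(ii) the point $x^*$ is a weak Pareto point of \cref{eq:MOP}. Since $x^*$ was an arbitrary limit point, all limit points of $\{x_k\}$ are weak Pareto points.

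There is essentially no analytical obstacle here, as the difficult work was carried out in \cref{thm:sequenceConvergence}; the argument is a routine semicontinuity limit. The one point worth emphasizing is what this statement does \emph{not} give: it characterizes every individual limit point but does not establish that the whole sequence $\{x_k\}$ converges to a single weak Pareto point. Obtaining full-sequence convergence would require a discrete Opial-type argument analogous to \cref{lem:Opial} in the continuous setting, namely showing that $\lim_{k\to\infty}\|x_k-z\|$ exists for every $z$ in the candidate limit set; this quasi-Fej\'er behavior is precisely the additional ingredient that the present bound does not supply, and it is the genuine difficulty flagged in \cref{rem:nosoluteprobelm}.
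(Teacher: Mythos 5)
Your proposal is correct and follows exactly the route the paper intends: its own proof is the one-line remark that the result ``follows immediately'' from \cref{thm:sequenceConvergence} and \cref{thm:weakpareto}, and your argument simply spells out that implication (boundedness of $\{x_k\}$ via \cref{coro:boundedsequence} and \cref{assum:levelset}, then $\varphi(x_k)\to 0$, then lower semicontinuity and nonnegativity of $\varphi$ at any limit point). Your closing observation about the gap between subsequential and full-sequence convergence also matches the paper's own discussion in \cref{rem:nosoluteprobelm}.
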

\begin{proof}
	By \cref{thm:sequenceConvergence} and \cref{thm:weakpareto} , the result follows immediately.\qed 
\end{proof}
%\begin{remark}\label{rem:nosoluteprobelm}
	%The problem of full sequence convergence to weakly Pareto optimal solutions for the sequence $\{x_k\}$ generated by accelerated algorithms in the general convex setting remains an open problem. %Tanabe et al. \cite{tanabe2022globally} attempted to address this issue, but their approach was incorrect. 
	%The main difficulty in addressing this problem lies in the fundamental difference between multiobjective optimization and single-objective optimization: generally, we cannot find a point $z\in \mathbb{R}^n$ that minimizes all objective functions simultaneously. Perhaps we can address this issue by improving the method of Tanabe et al. \cite{tanabe2022globally}. 
%\end{remark}

%%%%%%
\section{Numerical experiments}
\label{sec:Numercial}
In this section, we experimentally verify the theoretical results of the \cref{eq:MAVNG} trajectory solution as well as the effectiveness of the algorithm  \cref{eq:MFISC}. All numerical experiments were performed in the MATLAB 2021a environment on a personal computer with an Intel(R) Core(TM) i5-8300H CPU @ 2.30GHz   2.30 GHz processor and 8GB of RAM. All involved quadratic subproblems are solved using the Frank-Wolfe method.
\subsection{Examples for \cref{eq:MAVNG}}
{ 
In this subsection, we use two examples from \cite{sonntag2024fastgradientflow} to demonstrate the properties of our equation proved earlier, including convergence to weakly Pareto optimal solutions and the convergence rate \(\varphi(x(t)) = O(1/t^2)\). These two examples are quadratic programming and non-quadratic programming, respectively, and their specific forms can be found in \cref{appendix:test_Problem}. 

For \cref{eq:MAVNG}, we set $p=1.01$ and $\beta = 3.01$. For different parameter selections $\alpha \in \{5, 10, 50, 100\}$, we conducted comparative experiments with \cref{eq:MAVD} under the same $\alpha$ selection. The trajectories were approximated using discretization with a stepsize of $h = 10^{-3}$, $t_0 = 1$, and $t \approx t_k = t_0 + kh$. The experimental results for quadratic programming and non-quadratic programming are shown in \cref{fig:figuretrajectories1} and \cref{fig:figuretrajectories2}, respectively. The main conclusions of the numerical experiments are as follows:

\begin{itemize}
\item[$\rm (i)$] For \cref{eq:MAVNG}, we observed that on these two problems, the trajectory solutions converge to weakly Pareto optimal solutions, and under the characterization of the merit function $\varphi(x(t))$, the convergence rate achieves $O(1/t^2)$, which is consistent with the theoretical results provided in \cref{sec:Asymptotic}.

\item[$\rm (ii)$] For \cref{fig:figuretrajectories1}, we found that on this quadratic programming problem, the trajectory of \cref{eq:MAVNG} does not differ significantly from that of \cref{eq:MAVD}, and the final convergence results are very similar ((a)--(d)). Moreover, as $\alpha$ increases, the trajectory solutions of \cref{eq:MAVNG} exhibit faster convergence. Both equations demonstrate the following characteristic: the trend of the merit function value is robust for larger $\alpha$, as we did not observe significant differences between them in subfigures (g) and (h). 

\item[$\rm (iii)$] For \cref{fig:figuretrajectories2}, we observed that on the non-quadratic programming problem, the trajectory of \cref{eq:MAVNG} also does not differ significantly from that of \cref{eq:MAVD}. However, for larger $\alpha$, the function value decreases slowly along the trajectory of \cref{eq:MAVD}. In the case of \cref{eq:MAVNG}, after a period of slow change, the function value continues to decrease fast.  
\end{itemize}
Based on the above three points, we conclude that \cref{eq:MAVNG} is an effective and competitive continuous algorithm. Additionally, the advantages exhibited by \cref{eq:MAVNG} for larger $\alpha$ provide guidance for parameter selection in discrete algorithms.}
%$$
%\left\{
%\begin{aligned}
%r_k &=\frac{(\alpha -\beta )h}{(t_0+kh)^p}\frac{\|x_k-x_{k-1}\|}{\|{\proj}_{C(x_k)}(0)\|}{\proj}_{C(x_k)}(0),\\
%x_{k+1}&=x_k -\frac{t_0+kh}{t_0+kh+\alpha h}{\proj}_{h^2C(x_k)}(x_k-x_{k-1}-r_k).
%\end{aligned}\right.
%$$
%%where $h = 10^{-3}$, $t_0=1$, and $t_k=t_0+kh$. Moreover, we set $\beta =3$ and $p= 1$. 
%\subsubsection{Quadratic Programming Function}
%Select different \(\alpha \in \{5, 10, 50, 100\}\), and specify the bound \(\frac{\alpha}{t^2}\). The results of this experiment are presented in \cref{fig:figuretrajectories1}. Subfigures (a)-(d) display the trajectory solutions \(x(t)\) of \cref{eq:MAVNG} and \cref{eq:MAVNG} under different values of $\alpha$. It can be observed that the trajectories of these two distinct equations exhibit minimal differences. In subfigures (e)-(h), we illustrate the variation of the merit function $\varphi(x(t))$ corresponding to different $\alpha$ selections. The figures reveal that for larger $\alpha$ values, the merit function value decreases more rapidly along the \cref{eq:MAVNG} trajectory compared to the \cref{eq:MAVD} trajectory. However, it can also be observed that for smaller $\alpha$, $\varphi(x(t))$ exhibits more pronounced oscillations—a characteristic observed in both equations.
\begin{figure}[H]
	\centering
	\includegraphics[width=0.8\linewidth]{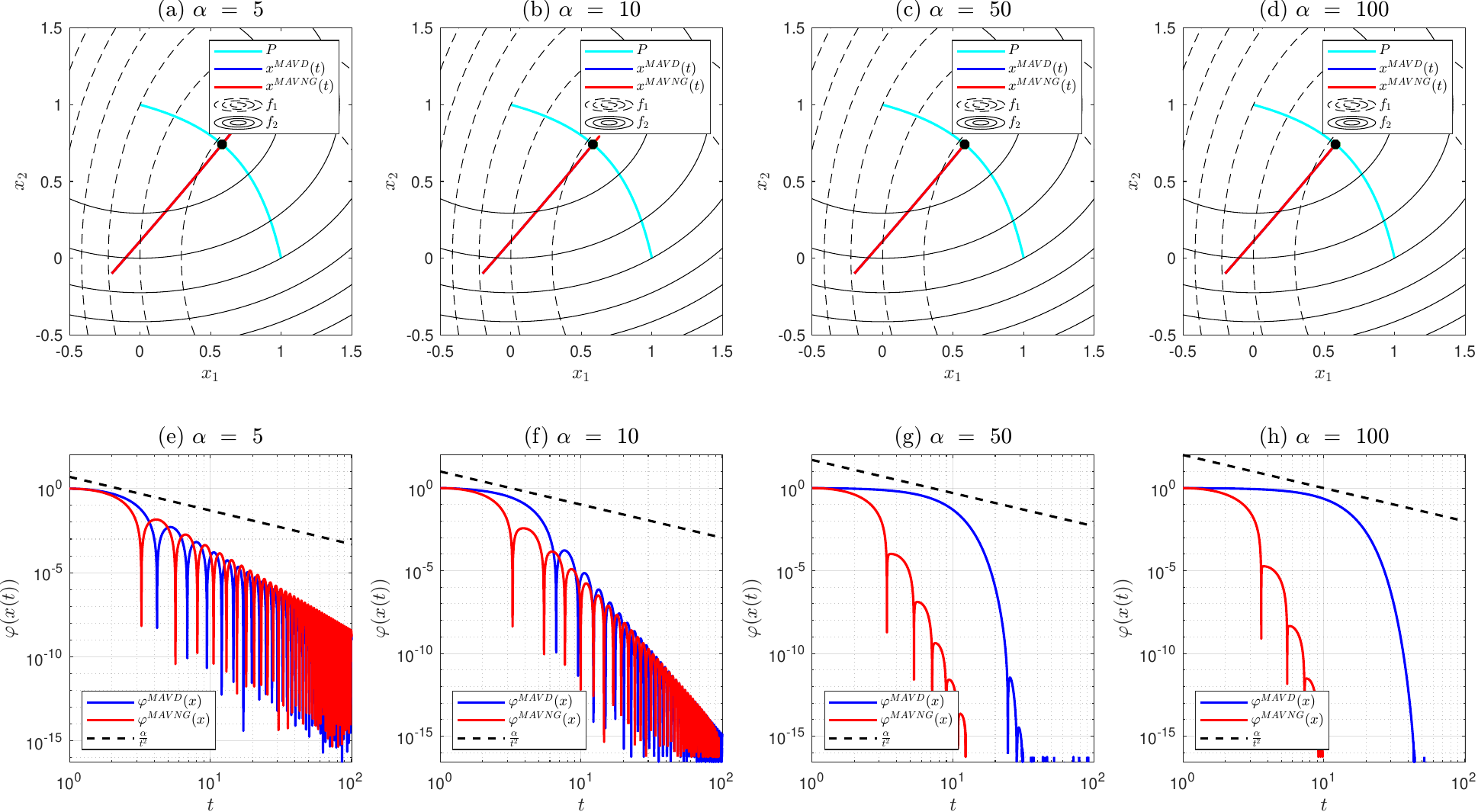}
	\caption{For the quadratic programming problems, the trajectories and changes in function values of $\rm MAVNG$ and $\rm MAVD$. The red line corresponds to MAVNG, and the blue line corresponds to MAVD.}
	\label{fig:figuretrajectories1}
\end{figure}
\begin{figure}[H]
	\centering
	\includegraphics[width=0.8\linewidth]{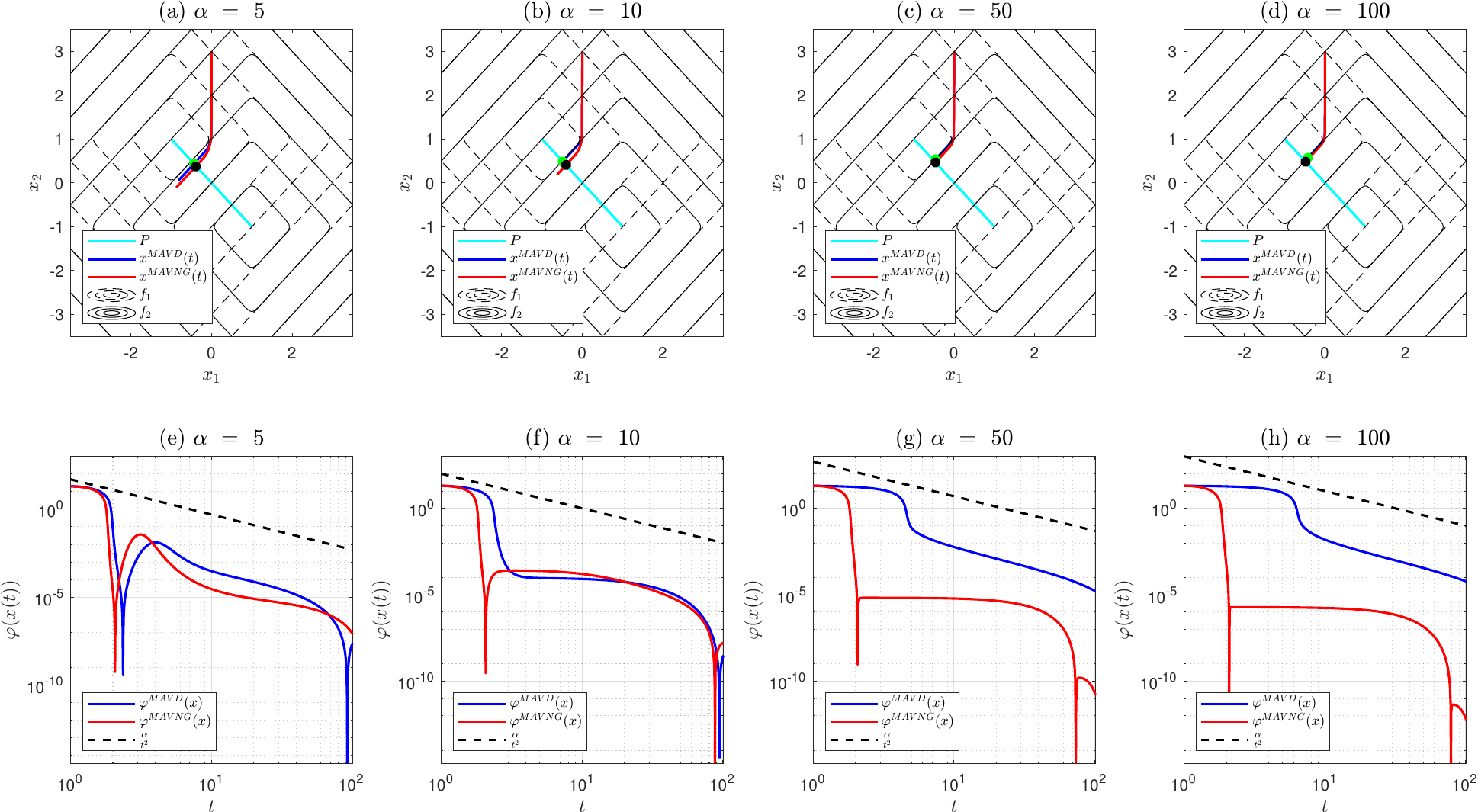}
	\caption{For the non-quadratic programming problems, the trajectories and changes in function values of $\rm MAVNG$ and $\rm MAVD$. The red line corresponds to MAVNG, and the blue line corresponds to MAVD.}
	\label{fig:figuretrajectories2}
\end{figure}

\subsection{Examples for \cref{eq:MFISC-real}}
{ In this section, we use the discrete algorithm \cref{eq:MFISC-real} to solve multiple multiobjective optimization problems and conduct comparative experiments with AccG \cite{sonntag2024fastgradientflow} and the steepest descent method MGSD \cite{fliege2000steepest}. The test problems are summarized in \cref{tab:test_problems}, and details can be found in \cref{appendix:test_Problem}. Based on the experimental conclusions of the continuous algorithm in the previous subsection, which indicate that \cref{eq:MAVNG} exhibits strong competitiveness when $\alpha$ is selected as a larger value, we choose the parameter $\alpha = 50$ in \cref{eq:MFISC}. Additionally, the termination criterion tolerance is set to $\varepsilon = 1e-4$ and the maximum number of iterations is $k_{\max} = 1000$. Based on these parameter selections, we conduct the following two  experiments:
\begin{itemize}
    \item[$\rm (i)$] According to the initial point selection ranges for each test problem in \cref{tab:test_problems}, we randomly select 100 initial points for experimentation. For stepsizes $s \in \{0.01, 0.05, 0.1\}$, we compute the average CPU runtime, average number of iterations, and problem-solving ratio for the three algorithms. The problem-solving ratio is defined as the ratio of the number of points that meet the termination criterion within the maximum number of iterations to the total 100 points. We record the most advantageous results for each algorithm under different stepsize conditions, as shown in \cref{tab:result}.\\
    \item[$\rm (ii)$] For three convex problems (JOS1a, SDa, TOI4a) and three non-convex problems (DD1, KW2, LTY3c), we select 500 initial points and apply $\rm MFISC$ and $\rm AccG$ to obtain the image sets at the final iteration points. The image sets generated for these two types of problems are plotted in \cref{fig:figure_convex_pareto_front,fig:figure_nonconvex_pareto_front}, respectively.
\end{itemize}}

\begin{table}[h]
    \centering
    \caption{Description of all test problems used in numerical experiments. The LTY3, DD1 and KW2 are nonconvex probelms.}
        \setlength{\tabcolsep}{12pt}
    \label{tab:test_problems}
    \begin{tabular}{c c c c c c }
        \toprule
        {Problem} & \({n}\) & \({m}\) & \({x_L}\) & \({x_U}\) & {Ref} \\
        \midrule
        JOS1a        & 5 & 2 & \((-10,\dots,-10)\) & \((10,\dots,10)\)      & \cite{mita2019nonmonotone} \\
        JOS1b        & 20 & 2 & \((-10,\dots,-10)\) & \((10,\dots,10)\)      & \cite{mita2019nonmonotone} \\
        JOS1c        & 50 & 2 & \((-10,\dots,-10)\) & \((10,\dots,10)\)      & \cite{mita2019nonmonotone} \\
        JOS1d        & 100 & 2 & \((-10,\dots,-10)\) & \((10,\dots,10)\)      & \cite{mita2019nonmonotone} \\
        SDa        & 4 & 2 & \((1,\sqrt 2,\sqrt 2.1)\)       & \((3,3,3,3)\)              & \cite{mita2019nonmonotone} \\
        SDb        & 20 & 2 & \((1,\sqrt 2,\dots,\sqrt 2.1)\)       & \((3,3,\dots ,3,3)\)              & \cite{mita2019nonmonotone} \\
        SDc        & 50 & 2 & \((1,\sqrt 2,\dots,\sqrt 2.1)\)       & \((3,3,\dots,3,3)\)              & \cite{mita2019nonmonotone} \\
        TOI4a       & 4 & 2 & \((-2,-2,-2,-2)\)         & \((5,5,5,5)\)              & \cite{mita2019nonmonotone} \\
        TOI4b       & 40 & 2 & \((-2,\dots,-2)\)         & \((5,\dots,5)\)              & \cite{mita2019nonmonotone} \\
        TOI4c       & 100 & 2 & \((-2,\dots,-2)\)         & \((5,\dots,5)\)              & \cite{mita2019nonmonotone} \\
        FDSa        & 5 & 3 & \((-2,\dots,-2)\)   & \((2,\dots,2)\)        & \cite{mita2019nonmonotone} \\
        FDSb        & 20 & 3 & \((-2,\dots,-2)\)   & \((2,\dots,2)\)        & \cite{mita2019nonmonotone} \\
        FDSc        & 50 & 3 & \((-2,\dots,-2)\)   & \((2,\dots,2)\)        & \cite{mita2019nonmonotone} \\
        FDSd        & 100 & 3 & \((-2,\dots,-2)\)   & \((2,\dots,2)\)        & \cite{mita2019nonmonotone} \\
        LTY1a        & 50 & 3 & \((-15,\dots,-15)\)      & \((15,\dots,15)\)             & \cite{luo2025accelerated} \\
        LTY1b        & 100 & 3 & \((-15,\dots,-15)\)      & \((15,\dots,15)\)             & \cite{luo2025accelerated} \\
        LTY1c        & 200 & 3 & \((-15,\dots,-15)\)      & \((15,\dots,15)\)             & \cite{luo2025accelerated} \\
        LTY1d        & 300 & 3 & \((-15,\dots,-15)\)      & \((15,\dots,15)\)             & \cite{luo2025accelerated} \\
        LTY2a        & 10 & 3 & \((-2,\dots,-2)\)      & \((2,\dots,2)\)             & \cite{luo2025accelerated} \\
        LTY2b        & 20 & 3 & \((-2,\dots,-2)\)      & \((2,\dots,2)\)             & \cite{luo2025accelerated} \\
        LTY2c        & 30 & 3 & \((-2,\dots,-2)\)      & \((2,\dots,2)\)             & \cite{luo2025accelerated} \\
        LTY3a       & 10 & 2 & \((-2,\dots,-2)\)      & \((2,\dots,2)\)             & \cite{luo2025accelerated} \\
        LTY3b        & 50 & 2 & \((-2,\dots,-2)\)      & \((2,\dots,2)\)             & \cite{luo2025accelerated} \\
        LTY3c        & 100 & 2 & \((-2,\dots,-2)\)      & \((2,\dots,2)\)             & \cite{luo2025accelerated} \\
        DD1       & 5 & 2 & \((-20,\cdots,-20)\)           & \((20,\cdots,20)\)              & \cite{mita2019nonmonotone}\\
        KW2       & 2 & 2 & \((-3,-3)\)           & \((3,3)\)              & \cite{mita2019nonmonotone} \\
        \bottomrule
    \end{tabular}
\end{table}

\begin{table}[htbp]
    \centering
    \caption{The most advantageous average CPU time(s), average number of iterations, and problem-solving ratio of MFISC, AccG, and MGSD at stepsizes $s \in \{0.01, 0.05, 0.1\}$. "Most advantageous" refers to the result with the shortest CPU time under the condition of achieving the highest problem-solving ratio.}
    \setlength{\tabcolsep}{3.8pt}
    \label{tab:result}
    \begin{tabular}{llllllllll}
        \toprule
        \multirow{2}{*}{Problem} & \multicolumn{3}{c}{MFISC} & \multicolumn{3}{c}{AccG} & \multicolumn{3}{c}{MGSD} \\
        \cmidrule(lr){2-4} \cmidrule(lr){5-7} \cmidrule(lr){8-10}
        & Time & Iter & Ratio & Time & Iter & Ratio & Time & Iter & Ratio \\
        \midrule
        {JOS1a} & \textbf{0.0031} & 28.28 & 1.00 & 0.0088 & 185.12 & 1.00 & 0.0059 & 264.29 & 1.00 \\
        {JOS1b} & \textbf{0.00078} & 35.28 & 1.00 & 0.0039 & 264.93 & 1.00 & 0.015 & 999.18 & 0.12 \\
        {JOS1c} & \textbf{0.0016} & 63.10 & 1.00 & 0.0028 & 210 & 1.00 & 0.013 & 1000 & 0.00 \\
        {JOS1d} &\textbf{0.0031} & 97.01 & 1.00 & 0.0059 & 438 & 1.00 & 0.013 & 1000 & 0.00 \\
        {SDa} & \textbf{0.0030} & 24.10 & 1.00 & 0.0088 & 133.79 & 1.00 & 0.0059 & 270.25 & 1.00 \\
         {SDb} &\textbf{0.00078} & 27.28 & 1.00 & 0.0033 & 181.78 & 1.00 & 0.0044 & 263.12 & 1.00 \\
         {SDc} & \textbf{0.00078} & 24.55 & 1.00 & 0.0034 & 188.27 & 1.00 & 0.0050 & 266.06 & 1.00 \\
         {TOI4a} & {0.0036} & 27.36 & 1.00 & \textbf{0.0028} & 41.66 & 1.00 & 0.0033 & 48.95 & 1.00 \\
         {TOI4b} & \textbf{0.0013} & 32.67 & 1.00 & 0.0016 & 66.99 & 1.00 & 0.0014 & 59.60 & 1.00 \\
         {TOI4c} & \textbf{0.0011} & 31.73 & 1.00 & 0.0016 & 66.16 & 1.00 & 0.0013 & 59.22 & 1.00 \\
         {FDSa} & \textbf{0.089} & 32.64 & 1.00 & 0.10 & 203.75 & 1.00 & 0.73 & 726.67 & 0.67 \\
         {FDSb} & \textbf{0.030} & 21.08 & 1.00 & 0.28 & 442.27 & 1.00 & 0.24 & 958.37 & 0.12 \\
         {FDSc} & \textbf{0.020} & 24.07 & 1.00 & 0.57 & 675.51 & 1.00 & 0.027 & 1000 & 0.00 \\
         {FDSd} & \textbf{0.013} & 53.78 & 1.00 & 0.43 & 767.29 & 0.88 & 0.033 & 1000 & 0.00 \\
         {LTY1a} & \textbf{0.094 }& 55.23 & 1.00 & 0.48 & 701.86 & 1.00 & 0.49 & 999.87 & 0.01 \\
         {LTY1b} & \textbf{0.67} & 336.54 & 0.87 & 0.67 & 990.24 & 0.15 & 0.43 & 1000 & 0.00 \\
         {LTY1c} & \textbf{2.87 }& 396.63 & 0.82 & 1.80 & 1000 & 0.00 & 2.5 & 1000 & 0.00 \\
         {LTY1d} & \textbf{4.91} & 475.14 & 0.57 & 1.60 & 670.66 & 0.33 & 3.8 & 1000 & 0.00 \\
         {LYT2a} & {0.35} & 133.33 & 0.95 & \textbf{0.27} & 387.13 & 1.00 & 0.86 & 988.68 & 0.07 \\
         {LYT2b} & {0.20} & 133.21 & 0.99 & \textbf{0.62} & 606.69 & 1.00 & 0.29 & 1000 & 0.00 \\
         {LYT2c} & \textbf{0.32} & 222.86 & 1.00 & 0.45 & 418.51 & 1.00 & 0.19 & 1000 & 0.00 \\
         {LTY3a} & {0.0017} & 20.35 & 1.00 & \textbf{0.0013} & 62.37 & 1.00 & 0.0020 & 103.15 & 1.00 \\
         {LTY3b} & \textbf{0.00047} & 19.53 & 1.00 & \textbf{0.00047 }& 29.13 & 1.00 & \textbf{0.00047} & 30.86 & 1.00 \\
         {LTY3c} & {0.00063} & 15.76 & 1.00 & \textbf{0.00031} & 22.72 & 1.00 & \textbf{0.00031} & 13.76 & 1.00 \\
         {DD1} & \textbf{0.0028} & 29.96 & 1.00 & 0.0097 & 260.70 & 1.00 & 0.040 & 434.50 & 0.98 \\
         {KW2} & {0.0014} & 22.91 & 1.00 & \textbf{0.00078} & 60.86 & 1.00 & 0.0023 & 200.48 & 0.84 \\
        \hline
    \end{tabular}
\end{table}

\begin{figure}[H]
    \centering
    \includegraphics[width=0.8\linewidth]{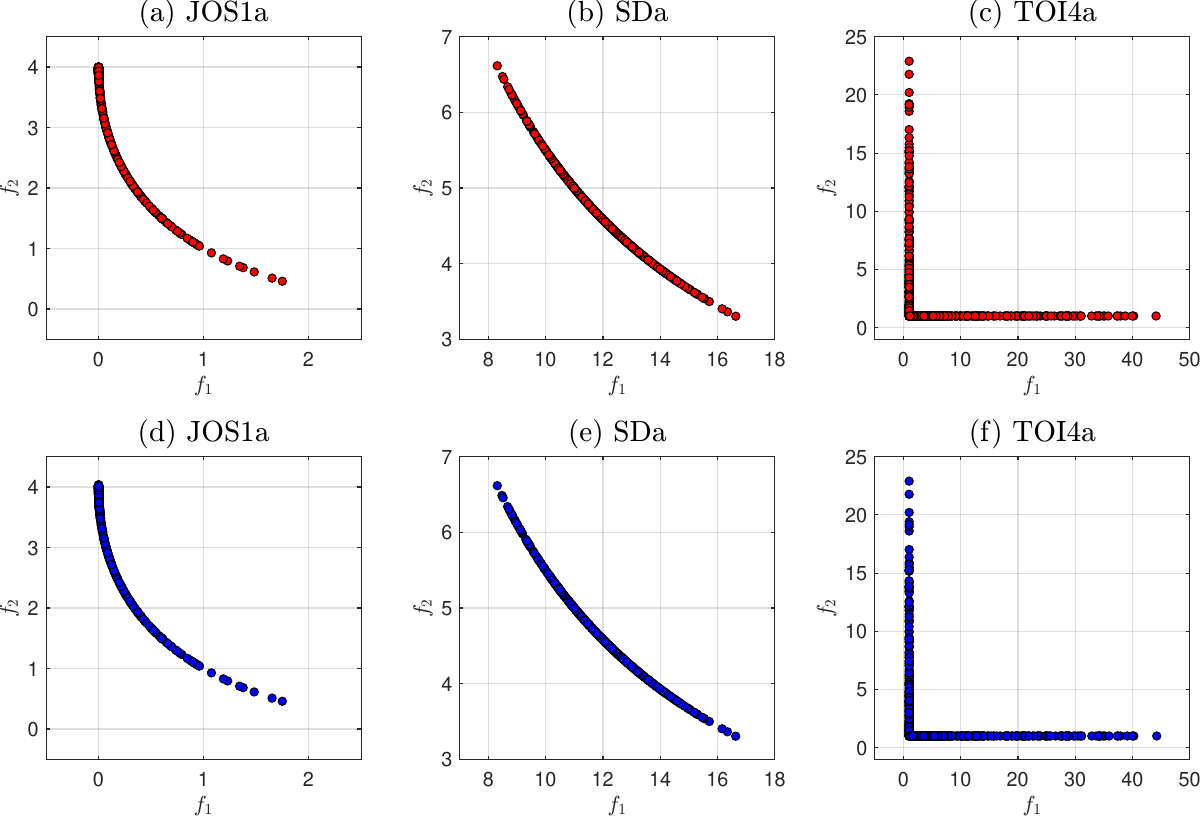}
    \caption{The image sets generated by the two comparative algorithms for the convex problems JOS1a, SDa and TOI4a. (a)--(c) correspond to MFISC; (d)--(f) correspond to AccG.}
    \label{fig:figure_convex_pareto_front}
\end{figure}

\begin{figure}[H]
    \centering
    \includegraphics[width=0.8\linewidth]{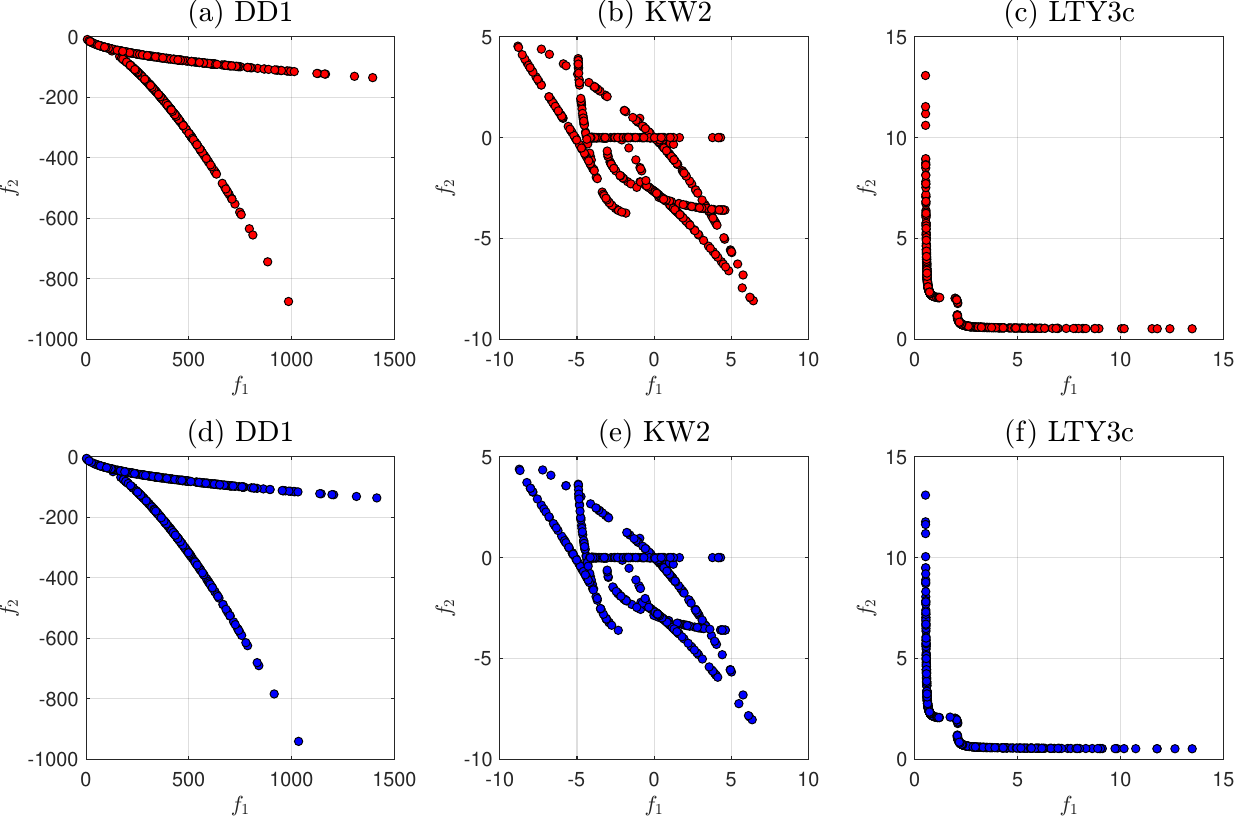}
    \caption{The image sets generated by the two comparative algorithms for the non-convex problems  DD1, KW2 and LTY3c. (a)--(c) correspond to MFISC; (d)--(f) correspond to AccG.}
    \label{fig:figure_nonconvex_pareto_front}
\end{figure}

{ 
The main conclusions of the numerical experiments on the discrete algorithm are as follows:

\begin{itemize}
    \item[$\rm (i)$] From the experimental results in \cref{tab:result}, it can be observed that our algorithm \cref{eq:MFISC-real} generally outperforms  $\rm AccG$ and $\rm MGSD$ in terms of CPU time, number of iterations, and problem-solving ratio. It is worth mentioning that for the convex problem LTY2, when the dimensionality is small, AccG performs better than $\rm MFISC$ in terms of time, but underperforms when the dimensionality is larger. For non-convex problems, AccG slightly outperforms MFISC overall. However, it should be noted that the advantages of AccG for LTY3 are all results under the stepsize selection of $s = 0.1$, which is often not chosen in practice. The reason is that a larger stepsize is more likely to violate the stepsize range required by the algorithm.
    \item[$\rm (ii)$] As shown in \cref{fig:figure_convex_pareto_front,fig:figure_nonconvex_pareto_front}, the final image set generated by MFISC exhibits low discrepancy compared to that of AccG. In fact, in the convex case, this image set represents the weak Pareto front, and \cref{fig:figure_convex_pareto_front} further validates the results we discussed in \cref{sec:Algorithm}.
\end{itemize}
Overall, MFISC demonstrates competitiveness as a discretization algorithm. For the cases where its performance is relatively poor, we attribute this to the fact that the algorithm involves solving two quadratic programming subproblems during each iteration, which is an aspect that requires further improvement in future work.}

\section{Conclusion}
In this work, we interpret and extend the dynamical system and algorithm proposed by Wang et al.\cite{wang2021search} for single-objective optimization to multiobjective settings, demonstrating strong competitiveness. Under appropriate parameter choices, we prove that the system achieves a convergence rate of $O(1/t^2)$ towards weakly Pareto optimal solutions for smooth convex multiobjective optimization problems. For the algorithm, we establish a convergence rate of $O(\ln^2 t / t^2)$. Numerical experiments indicate that this extension offers distinct advantages in multiobjective optimization.  { Our algorithm involves solving two quadratic programs per iteration, which leads to relatively high computational costs in certain problems. In future work, we will refine this aspect to enhance efficiency.} 

In addition, it is interesting to either extend existing multiobjective algorithms into continuous forms or accelerate the convergence of gradient flows. In the future, we will investigate multiobjective balanced gradient flows and gradient flows with time scaling. The former extends a class of multiobjective algorithms designed for solving imbalanced problems into continuous forms, while the latter can further improve the theoretical convergence rates of dynamical systems. Preliminary research findings can be found in \cite{yin2025multiobjective,yin2025multiobjective2}.

\begin{acknowledgements}
    The authors would like to express sincere gratitude to Hao Luo (\url{luohao@cqnu.edu.cn}) for his crucial guidance on the key theoretical methodology of this paper; to Chengzhi Huang and Zhuoxin Fan for their contributions to enhancing the quality of the manuscript; and to Wenzhe Zhao,  Hua Liu and Jiaxin Li for their invaluable assistance in solidifying the foundational knowledge of the first author Yingdong Yin.
\end{acknowledgements}

\begin{appendices}
	\appendix
	\section{Auxiliary lemmas}
    \begin{lem}[Existence theorem]\label{lem:exist}
        Let \(\mathcal{X}\) be a real Hilbert space, and let \(\Omega \subset \mathbb{R} \times \mathcal{X}\) be an open subset containing \((t_0, x_0)\). Let \(G\) be an upper semicontinuous map from \(\Omega\) into the nonempty closed convex subsets of \(\mathcal{X}\). We assume that \((t, x) \mapsto \proj_{G(t,x)}(0)\) is locally compact. Then, there exists \(T > t_0\) and an absolutely continuous function \(x\) defined on \([t_0, T]\), which is a solution to the differential inclusion
        \begin{equation*}
        \dot{x}(t) \in G(t, x(t)), \quad x(t_0) = x_0.
        \end{equation*}
    \end{lem}
    \begin{proof}
        See \cite[Theorem 3.1]{aubin2009differential}. \qed 
    \end{proof} 
    
\begin{lem}[Opial's lemma]	\label{lem:Opial}	
    Let \( V \subseteq \mathbb{R}^n \) be a nonempty subset, and let \( x: [t_0, +\infty) \to \mathbb{R}^n \). Assume that \( x \) satisfies the following conditions,
    \begin{itemize}
        \item[\(\rm (i)\)] Every limit point \( x^* \) of \( x \) belongs to \( V \), i.e., there exists a sequence \(\{t_k\} \subseteq \mathbb{R}^n\) such that \( x^* = \lim_{k \to \infty} x(t_k) \) exists and belongs to \( V \).
        
        \item[\(\rm (ii)\)] For every \( z \in V \), \( \lim_{t \to \infty} \|x(t) - z\| \) exists.
    \end{itemize}
    Then, \( x(t) \) converges to an point \( x^\infty \in V \) as \( t \to \infty \).
\end{lem}
\begin{proof}
    See \cite[Lemma 2.1]{Attouch2014}. 
\end{proof}

\begin{lem}\label{lem:limitexistlem}
    Let \(t_0 > 0\), and let \(h: [t_0, +\infty) \to \mathbb{R}\) be a continuously differentiable function with a lower bound. Assume
    \[
    t \ddot h(t) + \alpha \dot h(t) \le g(t),
    \]
    for some \(\alpha > 1\) and almost all \(t \in [t_0, +\infty)\), where \(g \in L^1([t_0, +\infty))\) is a nonnegative function. Then, \(\lim_{t \to +\infty} h(t)\) exists.
\end{lem}
\begin{proof}
    See \cite[Lemma 5.9]{attouch2018fast}.\qed
\end{proof} 
\begin{lem}\label{lem:lemmaA4}
    Let $C\subseteq \R^n$ be a convex and closed set and $\eta \in \R^n$ a fixed vector. Then, $\xi \in \R^n$ is a solution to the problem
    \begin{equation*}
    \textit{Find } \xi \in \mathbb{R}^n \textit{ such that } \eta = \proj_{C+\xi}(0),
    \end{equation*}
    if and only if it has the form $\xi =\eta -\mu$, where $\mu$ is a solution to the constrained optimzation problem $\min_{\mu\in C}\langle \mu, \eta \rangle $. 
\end{lem}
\begin{proof}
    See \cite[Lemma A.1]{sonntag2024fastNestrovAlgorithm}.\qed 
\end{proof}
\begin{lem}\label{lem:projection_equation} Let \( C \subseteq \mathbb{R}^n \) be a convex and closed set, and let \( a > 0 \), \( v \in \mathbb{R}^n \) be fixed. Then the problem  
\[
\textit{Find } \xi \in \mathbb{R}^n \textit{ such that } -a(\xi + v) = \proj_{C+\xi}(0),
\]  
has a unique solution, given by
\[
\xi = -\left( \frac{1}{1+a} \proj_{C}(v) + \frac{a}{1+a} v \right).
\]
	\end{lem}
\begin{proof}
    See \cite[Lemma A.2]{sonntag2024fastNestrovAlgorithm}.\qed 
\end{proof}
	\begin{lem}\label{lem:lemmaA2} Let \( t_0 \in \mathbb{R} \) and \( T \in (t_0, +\infty) \). Let \( a \in [0, +\infty) \) and \( g \in L^1([0, T], \mathbb{R}) \) with \( g(t) \geq 0 \) almost everywhere for \( t \in [0, T] \). Suppose \( h \in C([0, T], \mathbb{R}) \) satisfies  
		\[
		h(t) \leq a + \int_{t_0}^t g(s)h(s) \, ds, \quad \text{for all } t \in [t_0, T].
		\]  
		Then  
		\[
		h(t) \leq a \cdot \exp\left( \int_{t_0}^t g(s) \, ds \right) ,\quad \text{for all } t \in [t_0, T].
		\]
	\end{lem}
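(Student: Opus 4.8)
The plan is to reduce the integral inequality to a differential one by the standard integrating-factor argument for Grönwall's Lemma. First I would introduce the auxiliary function
$$
H(t) := a + \int_{t_0}^t g(s) h(s)\,ds, \qquad t \in [t_0, T],
$$
so that by hypothesis $h(t) \le H(t)$ for all $t \in [t_0,T]$. Since $h$ is continuous on the compact interval $[t_0,T]$ it is bounded, and $g \in L^1([t_0,T],\mathbb{R})$, so the integrand $g h$ is integrable; consequently $H$ is absolutely continuous with $H(t_0) = a$, and its derivative $\dot H(t) = g(t) h(t)$ exists for almost every $t$.

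Next I would combine $h \le H$ with the sign condition $g \ge 0$ (a.e.) to obtain the key almost-everywhere differential inequality
$$
\dot H(t) = g(t) h(t) \le g(t) H(t) \qquad \text{for almost all } t \in [t_0, T].
$$
I would then introduce the integrating factor $G(t) := \exp\!\left(-\int_{t_0}^t g(s)\,ds\right)$, which is absolutely continuous and strictly positive with $\dot G(t) = -g(t) G(t)$ a.e. Because a product of absolutely continuous functions is absolutely continuous, the fundamental theorem of calculus applies to $H G$, and
$$
\frac{d}{dt}\big(H(t) G(t)\big) = G(t)\big(\dot H(t) - g(t) H(t)\big) \le 0 \qquad \text{a.e.}
$$
Hence $H G$ is non-increasing on $[t_0, T]$, so $H(t) G(t) \le H(t_0) G(t_0) = a$ for all $t$. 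Rearranging gives $H(t) \le a\,\exp\!\left(\int_{t_0}^t g(s)\,ds\right)$, and since $h(t) \le H(t)$ the claimed bound follows immediately.

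The only delicate point, and the step I would treat most carefully, is the regularity bookkeeping: $H$ and $G$ are merely absolutely continuous (not $C^1$) because $g$ is only $L^1$, so the product rule and the conclusion that $H G$ is non-increasing must be justified through the fundamental theorem of calculus for absolutely continuous functions rather than through pointwise differentiation everywhere. Establishing that $H G$ is absolutely continuous and that its almost-everywhere derivative is nonpositive is enough to force monotonicity, and this is the one place where the $L^1$ hypothesis on $g$ genuinely has to be handled with care.
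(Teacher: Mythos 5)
Your proof is correct. Note first that the paper offers no proof of this lemma at all: it is stated with a citation to Attouch--Garrigos and used as a black box, so there is nothing internal to compare against. Your integrating-factor argument is the standard proof of the integral form of Gr\"onwall's inequality, and you handle the genuinely delicate points properly: $H$ is absolutely continuous because it is the indefinite Lebesgue integral of $gh \in L^1$ (integrability following from $g \in L^1$ and the boundedness of the continuous $h$ on the compact interval); $G$ is absolutely continuous and strictly positive; the product $HG$ is again absolutely continuous, so the a.e.\ product rule together with the fundamental theorem of calculus for absolutely continuous functions legitimately yields monotonicity of $HG$ from the a.e.\ sign condition on its derivative. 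Two small observations: the hypothesis $a \ge 0$ is never actually used in your argument (the bound holds for any $a \in \mathbb{R}$ by this route), and the strict positivity of $G$ is what allows the final rearrangement $H(t) \le a\exp\bigl(\int_{t_0}^t g(s)\,ds\bigr)$ --- you invoke it implicitly, and it would be worth stating explicitly. Neither point is a gap; the proof as written is complete and would serve as a self-contained replacement for the paper's citation.
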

\begin{proof}
    See \cite[Lemma A.1]{attouch2015multiibjective}.\qed 
    \end{proof} 
	\begin{lem}\label{lem:lemmaA3} Let \( t_0 \in \mathbb{R} \) and \( T \in (t_0, +\infty) \). Let \( a \in [0, +\infty) \), and \( g \in L^1([t_0, T], \mathbb{R}) \) with \( g(t) \geq 0 \) almost everywhere for \( t \in [t_0, T] \). Suppose \( h \in C([t_0, T], \mathbb{R}) \) satisfies  
		\[
		\frac{1}{2} h^2(t) \leq \frac{\alpha^2}{2} + \int_{t_0}^t g(s)h(s), \ ds \quad \text{for all } t \in [t_0, T].
		\]  
		Then  
		\[
		|h(t)| \leq a + \int_{t_0}^t g(s) \, ds \quad \text{for } t \in [t_0, T].
		\]
	\end{lem}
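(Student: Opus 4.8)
The plan is to reduce the quadratic integral inequality to a linear differential inequality for $\sqrt{2\psi}$, where $\psi$ denotes the right-hand side of the hypothesis, and then integrate. First I would set
\begin{equation*}
\psi(t):=\frac{a^2}{2}+\int_{t_0}^t g(s)h(s)\,ds,
\end{equation*}
so that the hypothesis reads $\tfrac12 h^2(t)\le\psi(t)$; in particular $\psi(t)\ge 0$ and $|h(t)|\le\sqrt{2\psi(t)}$ for every $t\in[t_0,T]$. Since $h$ is continuous and $g\in L^1([t_0,T],\R)$, the product $gh$ lies in $L^1([t_0,T],\R)$, so $\psi$ is absolutely continuous with $\psi'(t)=g(t)h(t)$ for almost all $t$, and $\psi(t_0)=\tfrac{a^2}{2}$.

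The key step is to differentiate $\sqrt{2\psi}$ and exploit $g\ge 0$. The obstacle is that $\psi$ may vanish at some points, where $\sqrt{2\psi}$ need not be differentiable; I would circumvent this by a standard regularization. Fix $\varepsilon>0$ and put $\psi_\varepsilon:=\psi+\varepsilon\ge\varepsilon>0$. Then $\sqrt{2\psi_\varepsilon}$ is absolutely continuous on $[t_0,T]$, being the composition of the absolutely continuous function $\psi_\varepsilon$ (whose range is a compact subset of $[\varepsilon,+\infty)$) with the map $u\mapsto\sqrt{2u}$, which is Lipschitz on $[\varepsilon,+\infty)$. Hence the chain rule holds almost everywhere, and for almost all $t$
\begin{equation*}
\frac{d}{dt}\sqrt{2\psi_\varepsilon(t)}=\frac{g(t)h(t)}{\sqrt{2\psi_\varepsilon(t)}}\le\frac{g(t)\sqrt{2\psi(t)}}{\sqrt{2\psi_\varepsilon(t)}}\le g(t),
\end{equation*}
where the first inequality uses $g\ge 0$ together with $h\le|h|\le\sqrt{2\psi}$, and the second uses $\psi\le\psi_\varepsilon$.

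Integrating this differential inequality from $t_0$ to $t$ and recalling $\psi_\varepsilon(t_0)=\tfrac{a^2}{2}+\varepsilon$ gives
\begin{equation*}
\sqrt{2\psi_\varepsilon(t)}\le\sqrt{a^2+2\varepsilon}+\int_{t_0}^t g(s)\,ds.
\end{equation*}
Since $|h(t)|\le\sqrt{2\psi(t)}\le\sqrt{2\psi_\varepsilon(t)}$, letting $\varepsilon\to 0^+$ and using $a\ge 0$ (so $\sqrt{a^2}=a$) yields $|h(t)|\le a+\int_{t_0}^t g(s)\,ds$ for all $t\in[t_0,T]$, as claimed. The only genuinely delicate point is the differentiation of the square root at the zeros of $\psi$, and the $\varepsilon$-shift resolves it cleanly; everything else is the fundamental theorem of calculus for absolutely continuous functions. (I note that the hypothesis is evidently stated with the constant $\tfrac{a^2}{2}$, the symbol $\alpha$ appearing there being a typographical slip for $a$.)
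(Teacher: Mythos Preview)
Your argument is correct and is the standard proof of this Gronwall-type inequality: introduce the right-hand side as an absolutely continuous majorant, differentiate its square root after an $\varepsilon$-regularization to avoid the zeros, and integrate. The paper itself does not supply a proof of this lemma; it is quoted from \cite{attouch2015multiibjective} as an auxiliary result, so there is nothing to compare against beyond noting that your proof is exactly the one that reference would give. Your observation that the constant $\alpha^2/2$ in the displayed hypothesis is a typo for $a^2/2$ is also correct.
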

\begin{proof}
    \cite[Lemma 5.13]{attouch2018fast}.\qed 
\end{proof}
	\begin{lem}\label{lem:Gronwelldiscrete}
		
		Let $\{a_k\}$ be a nonnegative sequence such that
		\[
		a_k^2 \leq c^2 + \sum_{j=1}^{k} \beta_j a_j \quad \text{for all } k \geq 1,
		\]
		where $\{\beta_j\}$ is a nonnegative sequence and $c \geq 0$. Then, 
		\[
		a_k \leq c + \sum_{j=1}^k \beta_j\quad \text{for all } k \geq 1,
		\]
	\end{lem}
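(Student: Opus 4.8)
The plan is to reduce this implicit, quadratic recursion to an explicit scalar inequality by tracking the running maximum of the sequence. First I would introduce the partial sums $B_k := \sum_{j=1}^{k}\beta_j$ and the running maximum $A_k := \max_{1\le j\le k} a_j$. Both are nonnegative and nondecreasing in $k$, and the desired conclusion $a_k \le c + B_k$ will follow from the stronger bound $A_k \le c + B_k$, since trivially $a_k \le A_k$.

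The key step is to turn the hypothesis into a closed inequality for $A_k$. For any index $j$ with $1\le j\le k$, monotonicity of the partial sums together with $a_i \le A_k$ for all $i \le k$ gives $a_j^2 \le c^2 + \sum_{i=1}^{j}\beta_i a_i \le c^2 + \sum_{i=1}^{k}\beta_i a_i \le c^2 + A_k\sum_{i=1}^{k}\beta_i = c^2 + A_k B_k$. Taking the maximum over $j \in \{1,\dots,k\}$ on the left yields the scalar quadratic inequality $A_k^2 \le c^2 + A_k B_k$, that is, $A_k^2 - B_k A_k - c^2 \le 0$.

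It then remains to solve this quadratic. Since the left-hand polynomial opens upward, $A_k$ cannot exceed its larger root, so $A_k \le \tfrac12\bigl(B_k + \sqrt{B_k^2 + 4c^2}\,\bigr)$. Using $B_k, c \ge 0$ I would bound $\sqrt{B_k^2 + 4c^2} \le \sqrt{(B_k + 2c)^2} = B_k + 2c$, which gives $A_k \le \tfrac12\bigl(B_k + B_k + 2c\bigr) = c + B_k$. Combined with $a_k \le A_k$, this is exactly the claim.

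The only genuine subtlety, and the step I expect to require the most care, is that the sum on the right-hand side of the hypothesis runs up to the current index $k$, so the bound $a_k^2 \le c^2 + \sum_{j=1}^k \beta_j a_j$ is implicit in $a_k$ (it contains the term $\beta_k a_k$). A naive induction stumbles on this self-reference; the running-maximum device circumvents it cleanly, because passing to $A_k$ absorbs the implicit term into a single scalar quadratic whose solution is explicit. No appeal to the continuous Grönwall-type results (\cref{lem:lemmaA2,lem:lemmaA3}) is needed, though the statement is precisely their discrete counterpart.
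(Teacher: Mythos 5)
Your proposal is correct and follows essentially the same route as the paper's own proof: both pass to the running maximum $A_k=\max_{1\le m\le k}a_m$, derive the scalar quadratic inequality $A_k^2\le c^2+A_k\sum_{j=1}^k\beta_j$, and bound $A_k$ by the larger root, using $\sqrt{B_k^2+4c^2}\le B_k+2c$ to conclude $a_k\le A_k\le c+\sum_{j=1}^k\beta_j$. There is no substantive difference between the two arguments.
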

	\begin{proof}
		For $k \geq 1$, let $A_k := \max_{1 \leq m \leq k} a_m$. Then for $1 \leq m \leq k$, we have
		\[
		a_m^2 \leq c^2 + \sum_{j=1}^{m} \beta_j a_j \leq c^2 + \sum_{j=1}^{k} \beta_j a_j \leq c^2 + A_k \sum_{j=1}^k \beta_j.
		\]
		Thus,
		\[
		A_k^2 \leq c^2 + A_k \sum_{j=1}^k \beta_j.
		\]
		Therefore, by solving the quadratic inequality, we obtain
		\[
		a_k \leq A_k \leq \frac{ \sum_{j=1}^k \beta_j + \sqrt{ \left( \sum_{j=1}^k \beta_j \right)^2 + 4c^2 } }{2} \leq  c + \sum_{j=1}^k \beta_j.
		\]
		The proof is complete.\qed 
	\end{proof}
	
    \section{Proof of \cref{lem:LyapunovInequal}}\label{appendix:ProofofLyapunov}
    \begin{definition}
         Let $x:[t_0,+\infty )\to \R^n$ be a global solution of \cref{eq:Cauchy-Problem}. For the objective function $f_i$, $i=1,\cdots,m$, we define the following Lyapunov function  
         $$
         \begin{aligned}  
         \mathcal E_z^i(t)&=\frac{t^2}{2(\alpha -1)}\Big(f_i(x(t))-f_i(z)\Big)\\  
         &\qquad +\frac{(\alpha -3)t^2}{4(\alpha -1)^2}\|\dot x(t)\|^2+\frac12\left\|x(t)-z+\frac{t}{\alpha -1}\dot x(t)\right\|   
         \end{aligned}  
         $$
         Recalling \cref{def:discrete-Lyapunov}, we have the following Lyapunov function  
         $$
         \begin{aligned}  
         \mathcal E_z(t)&=\min_{i=1,\cdots,m}\mathcal E_z^i(t)=\frac{t^2}{2(\alpha -1)}\min_{i=1,\cdots,m}\Big(f_i(x(t))-f_i(z)\Big)\\  
         &\qquad +\frac{(\alpha -3)t^2}{4(\alpha-1)^2}\|\dot x(t)\|^2+\frac12\left\|x(t)-z+\frac{t}{\alpha -1}\dot x(t)\right\|  
         \end{aligned}  
         $$
    \end{definition} 
\begin{lem}  Let $x:[t_0,+\infty )\to \R^n$ be a global solution of \cref{eq:Cauchy-Problem}. Then  
 $$
 \begin{aligned}  
 \frac{d}{dt}\mathcal E_z^i(t)&\le \frac{(3-\beta)t}{2(\alpha -1)}\|\dot x(t)\|^2-\frac{t}{\alpha -1}\min_{i=1,\cdots,m}\Big(f_i(x(t))-f_i(z)\Big)\\  
 &\qquad +\frac{t}{\alpha -1}\Big(f_i(x(t))-f_i(z)\Big)+\frac{\alpha -\beta}{(\alpha -1)t^{p-1}}\|\dot x(t)\|\|x(t)-z\|  
 \end{aligned}  
 $$
\end{lem}
{\it Proof\ }By directly computing $\frac{d}{dt}\mathcal E_z^i(t)$,  
\begin{equation*}
\begin{aligned}
\frac{d}{dt}\mathcal E_z^i(t)&=\frac{(\alpha -3)t}{2(\alpha -1)^2}\|\dot x(t)\|^2+\frac{(\alpha -3)t^2}{2(\alpha -1)^2}\Big<\dot x(t),\ddot x(t)\Big>\\
&\quad +\left<x-z+\frac{t}{\alpha -1}\dot x(t),\frac{\alpha}{\alpha -1}\dot x(t)+\frac{t}{\alpha -1}\ddot x(t)\right>\\
&\quad +\frac{t}{\alpha -1}\Big(f_i(x)-f_i(z)\Big)+\frac{t^2}{2(\alpha -1)}{\Big<\nabla f_i(x(t)),\dot x(t)}\Big>\\
&=\frac{3t}{2(\alpha -1)}\|\dot x(t)\|^2+\frac{t^2}{2(\alpha -1)}\bigg(\Big<\dot x(t),\ddot x(t)\Big>+\Big<\nabla f_i(x(t)),\dot x(t)\Big>\bigg)\\
&\quad +\frac{t}{\alpha -1}\left<z-x(t),\sum_{i=1}^m\theta_i(t)\nabla f_i(x(t))+\frac{\alpha -\beta }{t^p}\frac{\|\dot x(t)\|}{\|\proj_{C(x(t))}(0)\|}\proj_{C(x)}(0)\right>\\
&\quad +\frac{t}{\alpha -1}\Big(f_i(x(t))-f_i(z)\Big)\\
&\le \frac{(3-\beta)t}{2(\alpha -1)}\|\dot x(t)\|^2-\frac{t}{\alpha -1}\min_{i=1,\cdots, m}\Big(f_i(x(t))-f_i(z)\Big)\\
&\quad +\frac{t}{\alpha -1}\Big(f_i(x(t))-f_i(z)\Big)+\frac{\alpha -\beta }{(\alpha -1)t^{p-1}}\|\dot x(t)\|\|x(t)-z\|. 
\end{aligned}
\end{equation*}
The proof is complete.\qed 
    \begin{lem}\label{lem:diffequaldiffi}
    Let \(\{h_i\}_{i=1,\cdots,m}\) be a set of continuously differentiable functions, \(h_i: [t_0, +\infty) \to \mathbb{R}\). Define \(h: [t_0, +\infty) \to \mathbb{R}\), \(t \mapsto h(t) := \min_{i=1,\cdots,m} h_i(t)\). Then, the following holds:
    \begin{itemize}
        \item[\(\rm (i)\)] \(h\) is differentiable almost everywhere on \([t_0, +\infty)\);
        
        \item[\(\rm (ii)\)] \(h\) satisfies almost everywhere on \([t_0, +\infty]\) that there exists \(i \in \{1,\cdots,m\}\) such that
        \[
        h(t) = h_i(t) ,\qquad \frac{d}{dt} h(t) = \frac{d}{dt} h_i(t).
        \]
    \end{itemize}
\end{lem}
\begin{proof}
See \cite[Lemma 4.12]{sonntag2024fastgradientflow}. \qed 
\end{proof} 

Next we provide the proof of \cref{lem:LyapunovInequal}:

\noindent{\it Proof of \cref{lem:LyapunovInequal}\ } Using \cref{lem:diffequaldiffi}, we obtain that $\mathcal E_z(t)$ is differentiable for almost all $t\ge t_0$, and  there exists $i_0$ such that
$$
\mathcal E_z(t)=\mathcal E_z^{i_0}(t),\qquad\frac{d}{dt}\mathcal E_z(t)=\frac{d}{dt}\mathcal E_z^{i_0}(t){},  
$$
for almost all $t\ge t_0$. Thus, there exists $i_0$ such that  
$$
\begin{aligned}  
\frac{d}{dt}\mathcal E_z(t)&=\frac{d}{dt}\mathcal E_z^{i_0}(t)\\  
&\le \frac{3-\beta}{2(\alpha -1)}t\|\dot x(t)\|^2-\frac{t}{\alpha -1}\Big(f_{i_0}(x(t))-f_i(z)\Big)\\  
&\qquad +\frac{t}{\alpha -1}\Big(f_{i_0}(x(t))-f_{i_0}(z)\Big)+\frac{\alpha -\beta}{(\alpha -1)t^{p-1}}\|\dot x(t)\|\|x(t)-z\|\\  
&=\frac{3-\beta}{2(\alpha -1)}t\|\dot x(t)\|^2+\frac{\alpha -\beta}{(\alpha -1)t^{p-1}}\|\dot x(t)\|\|x(t)-z\|.   
\end{aligned}  
$$
The proof is complete.\qed 

\section{Supplement to \cref{sec:Algorithm}}\label{appendix:algorithm}
\subsection{Discretization of \cref{eq:MAVNG}}\label{appendix:subsection-algorithm}
\begin{equation}\label{eq:Discret-MAVNG3}
\left\{\begin{aligned}
\pi_k &=(x_k-x_{k-1})-\frac{\alpha -3}{k -1}\frac{\|x_k-x_{k-1}\|}{\|\proj_{C(x_k)}(0)\|}\proj _{C(x_k)}(0),\\
y_k&=x_k+\frac{k-1}{k+\alpha -1}\pi_k,\\
x_{k+1}&=y_k-s\frac{k-1}{k+\alpha -1}\proj_{C(y_k)}\left(\frac{1}{s}\pi_k\right).
\end{aligned}\right.
\end{equation}

%This iterative scheme can also be viewed as a discretization of \cref{eq:MAVNG}. 
Indeed, it is straightforward to observe that \cref{eq:Discret-MAVNG3} imply, 
\begin{equation}\label{eq:Discret-MAVNG2}
\begin{aligned}
x_{k+1}-x_k&=\frac{k-1}{k+\alpha -1}\left(\pi_k-s\proj_{C(y_k)}\left(\frac{1}{s}\pi_k\right)\right)\\
&=\frac{k-1}{k+\alpha -1}\left(\pi_k -\proj_{sC(y_k)}(\pi_k)\right)\\
&=-\frac{k-1}{k+\alpha -1}\proj_{sC(y_k)-\pi_k}(0)\\
&=-\frac{1}{1+\frac{\alpha}{k-1}}\proj_{sC(y_k)-\pi_k}(0).
\end{aligned}
\end{equation}
Following the \cref{lem:projection_equation}, we obtain:
$$
\begin{aligned}
-\frac{\alpha }{k-1}(x_{k+1}-x_k)=\proj_{sC(y_k)+\frac{\alpha -3}{k-1}\frac{\|x_k-x_{k-1}\|}{\|\proj_{C(x_k)}(0)\|}\proj_{C(x_k)}(0)+x_{k+1}-2x_k+x_{k-1}}(0).
\end{aligned}
$$
Similarly, let $t \approx t_k=(k-1)\sqrt{s}$, yielding:
\[
\begin{aligned}
-\frac{\alpha\sqrt{s}}{t}(\dot{x}(t)\sqrt{s}+o(\sqrt{s}))&=\proj_{sC(x(t))+\frac{(\alpha -3)\sqrt{s}}{t}\frac{\|\dot{x}(t)\|\sqrt{s}+o(\sqrt{s})}{\|\proj_{C(x(t))}(0)\|}\proj_{C(x(t))}(0)+\ddot{x}(t)+o(s)}(0)\\
&=s\proj_{C(x(t))+\frac{(\alpha -3)}{t}\frac{\|\dot{x}(t)\|}{\|\proj_{C(x(t))}(0)\|}\proj_{C(x(t))}(0)+\ddot{x}(t)}(0)+o(s).
\end{aligned}
\]
Comparing the coefficients of $s$, we derive:
\[
\frac{\alpha}{t}\dot{x}(t)+\proj_{C(x(t))+\frac{\alpha -3}{t}\frac{\|\dot{x}(t)\|}{\|\proj_{C(x(t))}(0)\|}\proj_{C(x(t))}(0)+\ddot{x}(t)}(0)=0.
\]
This implies that \cref{eq:MAVNG} is the continuous-time form of the iterative scheme \cref{eq:Discret-MAVNG3}.
\subsection{proofs of \cref{lem:boundedsequence,eq:Lyapunovk-Lyapunov1}}\label{appendix:subsection-Convergence}
 We give some notations as follows, 
\begin{itemize}
    \item[$\bullet$]  For any $z\in \R^n$, let  
    \begin{equation*} 
    \sigma_k(z):=\min_{i=1,\cdots,m}\Big(f_i(x_k)-f_i(z)\Big).
    \end{equation*}
    \item[$\bullet$] Let
    \begin{equation*}
    \Delta x_k=x_k-x_{k-1},\quad \phi_k = 2k+\alpha -3.
    \end{equation*}
    \item[$\bullet$] Let \(r_k\) be represented by the following vector.
    \begin{equation*}
    r_k=\frac{\|\Delta x_k\|}{\|\proj_{C(x_k)}(0)\|}\proj_{C(x_k)}(0).
    \end{equation*}
    \item[$\bullet$] Let
    \begin{equation*}
    \xi_k =\frac{k+\alpha -2}{\alpha -1},\quad \nu_k =\frac{2(k+\alpha -2)(k+\alpha -4)}{\alpha -1}.
    \end{equation*}
    \item[$\bullet$] Let 
    $$
    R = \sup_{x \in \mathcal{L}(F, F(x_0))} \|x\|  .
    $$
    
\end{itemize}
We have the following lemma:  

\begin{lem}\label{lem:Inequaityofsigma} 
    The following inequalities hold:  
    $$
    \begin{aligned}
    \sigma _k(z)&\le -\frac{1}{s}\left\langle x_k-y_{k-1},y_{k-1}-z \right\rangle -\frac{1}{2s}\|x_k-y_{k-1}\|^2, \\
    \sigma _{k}(z)-\sigma _{k-1}(z)&\le \max_{i=1,\cdots,m}(f_i(x_k)-f_i(x_{k-1}))\\&\le -\frac{1}{s}\left\langle x_k-y_{k-1},y_{k-1}-x_{k-1} \right\rangle -\frac{1}{2s}\|x_k-y_{k-1}\|^2.
    \end{aligned}
    $$  
\end{lem}
\begin{proof}
    The proof follows the same arguments as in \cite[Lemma 6.3, Lemma 6.5]{sonntag2024fastNestrovAlgorithm} and is omitted here.\qed 
\end{proof} 

\begin{coro}\label{coro:sigammore}
    For $k_1< k_2$, we have  
    $$
    \sigma_{k_2}(z)-\sigma _{k_1}(z)\le \frac{1}{2s}\Big[\|\Delta x_{k_1}\|^2-\|\Delta {x_{k_2}}\|^2\Big]+\frac1{2s}\sum_{k=k_1+1}^{k_2}Q_k\|\Delta x_{k-1}\|^2.
    $$  
    where $Q_k =\left(\frac{k+\alpha -5}{k+\alpha -2}\right)^2-1$. 
\end{coro} 
\begin{proof}
    For convenience of presentation, we first set the following parameters:
    $$
    \beta_k =\frac{\alpha }{k+\alpha -1},\quad \gamma_k =\frac{\alpha -3}{k+\alpha -1}.
    $$  
    Then, according to the definition of $y_k$, we have
    $$
    y_{k-1}=x_{k-1}+(1-\beta_{k-1})(x_{k-1}-x_{k-2})-\gamma_{k-1}r_{k-1}.
    $$  
    Based on this, we provide an estimate for $\|y_{k-1}-x_{k-1}\|$:
    $$
    \begin{aligned}
    \|y_{k-1}-x_{k-1}\|^2&=(1-\beta_{k-1})^2\|\Delta x_{k-1}\|^2+\gamma_{k-1}^2\|\Delta x_{k-1}\|^2\\&\qquad  -2(1-\beta_{k-1})\gamma_{k-1}\left\langle \Delta x_{k-1},r_{k-1} \right\rangle \\
    &\le \Big((1-\beta_{k-1})+\gamma_{k-1}\Big)^2\|\Delta x_{k-1}\|^2\\&=\left(\frac{k+\alpha -5}{k+\alpha -2}\right)^2\|\Delta x_{k-1}\|^2 .
    \end{aligned}
    $$  
    Furthermore, by \cref{lem:Inequaityofsigma} , we obtain
    \begin{equation}\label{eq:sigmadiff}
    \begin{aligned}
    \sigma_k(z)-\sigma_{k-1}(z)&\le \max_{i=1,\cdots,m}(f_i(x_k)-f_i(x_{k-1}))
    \\&\le -\frac{1}{2s}\left\langle x_{k}-y_{k-1},y_{k-1}-x_{k-1} \right\rangle -\frac{1}{2s}\|x_{k}-y_{k-1}\|^2\\
    &=\frac{1}{2s}\Big[\|y_{k-1}-x_{k-1}\|^2-\|x_k-x_{k-1}\|^2\Big]\\
    &\le \frac{1}{2s}\left[\left(\frac{k+\alpha -5}{k+\alpha -2}\right)^2\|\Delta x_{k-1}\|^2-\|\Delta x_k\|^2\right]\\
    &=\frac1{2s}\left(\left(\frac{k+\alpha -5}{k+\alpha -2}\right)^2-1\right)\|\Delta x_{k-1}\|^2\\&\qquad +\frac{1}{2s}\Big[\|\Delta x_{k-1}\|^2-\|\Delta x_k\|^2\Big].
    \end{aligned}
    \end{equation}
    Summing over $k$ from $k_1+1$ to $k_2$ completes the proof. \qed 
\end{proof}
{\it Proof of \cref{lem:boundedsequence}}
    Using \cref{lem:Inequaityofsigma}  and a reasoning similar to equation \cref{eq:sigmadiff}, we obtain that for any \( p \ge 1 \),  
\begin{equation}
\begin{aligned} 
    f_i(x_p) - f_i(x_{p-1}) &\max_{i=1,\cdots,m}(f_i(x_k)-f_i(x_{k-1}))\\&\le \frac{1}{2s}\Big[\|\Delta x_{p-1}\|^2 - \|\Delta x_p\|^2\Big]+\frac1{2s}\left(\left(\frac{k+\alpha -5}{k+\alpha-2 }\right)^2-1\right)\\&\le \frac{1}{2s}\Big[\|\Delta x_{p-1}\|^2 - \|\Delta x_p\|^2\Big]
\end{aligned}
\end{equation} 
    Summing from \( p = 2 \) to \( p = k \) and noting that \( x_1 = x_0 \), the conclusion follows.\qed 

\begin{lem} \label{lem:mathcalG}
    Let
    \begin{equation}
    \mathcal G_z(k)=\frac{2(k+\alpha -2)^2s}{\alpha -1}\sigma_k(z).
    \end{equation}
    Then
    \begin{equation}
    \begin{aligned}
    &\mathcal G_z(k)-\mathcal G_z(k-1)\\&\le(\alpha -1)\xi_k \bigg\langle y_{k-1}-x_k,2\xi_k y_{k-1}-\frac{2(k-1)}{\alpha -1}x_{k-1}-2z-\xi_k (y_{k-1}-x_k)\bigg\rangle\\
    &\quad +2(\alpha - 3)\xi_k \Big\langle y_{k-1}-x_k,z-x_{k-1}\Big\rangle-\frac{2s}{\alpha -1}\sigma_{k-1}(z).
    \end{aligned}
    \end{equation}
\end{lem}
\begin{proof} 
    Based on \cref{lem:Inequaityofsigma} , we have
    $$
    \begin{aligned}
    &\mathcal G_z(k)-\mathcal G_z(k-1)+\frac{2s}{\alpha -1}\sigma _{k-1}(z)\\
    =\ &\frac{4(k+\alpha -2)s}{\alpha -1}\sigma_k(z)+\frac{2(k+\alpha -2)(k+\alpha -4)s}{\alpha -1}\Big(\sigma_k(z)-\sigma _{k-1}(z)\Big)\\
    \le \ & -\frac{4(k+\alpha -2)s}{\alpha -1}\bigg(\frac1s\Big\langle x_k-y_{k-1},y_{k-1}-z\Big\rangle+\frac1{2s}\Big\|x_k-y_{k-1}\Big\|^2\bigg)\\
    &-\frac{2(k+\alpha -2)(k+\alpha -4)s}{\alpha -1}\bigg(\frac1{s}\Big\langle x_k-y_{k-1},y_{k-1}-x_{k-1}\Big\rangle+\frac1{2s}\Big\|x_k-y_{k-1}\Big\|^2\bigg).
    \end{aligned}
    $$
    Note that $4\xi_k +\nu_k =2(\alpha -1)\xi_k ^2$ and hence
    $$
    \begin{aligned}
    &\mathcal G_z(k)-\mathcal G_z(k-1)+\frac{2s}{\alpha -1}\sigma _{k-1}(z)\\
    \le \ &  4\xi_k \bigg(\Big\langle y_{k-1}-x_k,y_{k-1}-z\Big\rangle-\frac12\|x_k-y_{k-1}\|^2\bigg)\\& +\nu_k\bigg(\Big\langle y_{k-1}-x_{k},y_{k-1}-x_{k-1}\Big\rangle-\frac1{2}\Big\|x_k-y_{k-1}\Big\|^2\bigg)\\
    =\ &(4\xi_k +\nu_k )\bigg(\Big\langle y_{k-1}-x_k,y_{k-1}\Big\rangle-\frac12\|x_k-y_{k-1}\|^2\bigg)-4\xi_k\Big\langle y_{k-1}-x_k,z\Big\rangle\\&\quad -\nu_k \Big\langle y_{k-1}-x_k,x_{k-1}\Big\rangle\\
    =\ &2(\alpha -1)\xi_k^2\bigg(\Big\langle y_{k-1}-x_k,y_{k-1}\Big\rangle-\frac12\|x_k-y_{k-1}\|^2\bigg)-\Big\langle y_{k-1}-x_k,4\xi_k z+\nu_k x_{k-1}\Big\rangle\\
    =\ &2(\alpha -1)\xi_k^2\left(\Big<y_{k-1}-x_k,y_{k-1}\Big>-\frac12\|x_k-y_{k-1}\|^2\right)\\&\quad -(\alpha -1)\xi_k\left<y_{k-1}-x_k,\frac{4 }{\alpha -1}z{+\frac{\nu_k }{(\alpha -1)\xi_k}x_{k-1}}\right>\\
    =\ &2(\alpha -1)\xi_k^2\left(\Big<y_{k-1}-x_k,y_{k-1}\Big>-\frac12\|x_k-y_{k-1}\|^2\right)\\&\quad -(\alpha -1)\xi_k\left<y_{k-1}-x_k,\frac{4-2(\alpha -1) }{\alpha -1}z+2z{+\frac{\nu_k }{(\alpha -1)\xi_k}x_{k-1}}\right>\\
    =\ &(\alpha -1)\xi_k \bigg\langle y_{k-1}-x_k,2\xi_k y_{k-1}-\frac{2(k-1)}{\alpha -1}x_{k-1}-2z-\xi_k (y_{k-1}-x_k)\bigg\rangle\\
    &\quad +2(\alpha - 3)\xi_k \Big\langle y_{k-1}-x_k,z-x_{k-1}\Big\rangle.
    \end{aligned}
    $$
    Thus, the proof is completed.\qed 
\end{proof}
\begin{lem}\label{lem:Lyapunovestimate}
    Let
    \begin{equation}
    \mathcal E_z(k):=\mathcal G_z(k)+2\left\|x_k-z+\frac{k-1}{\alpha -1}\Delta x_k\right\|^2+\frac{(\alpha -3)(k-1)^2}{(\alpha -1)^2}\|\Delta x_k\|^2.
    \end{equation}
    Then
    \begin{equation}
    \begin{aligned} 
    \mathcal E_z(k)-\mathcal E_z(k-1)&\le -\frac{\alpha -3}{\alpha -1}\phi_k \|\Delta x_k\|^2+\frac{\alpha -3}{\alpha -1}(\phi_{k-1}-2)\|\Delta x_{k-1}\|^2\\&\quad +4\frac{\alpha -3}{\alpha -1}\| \Delta x_{k-1}\|\|x_{k-1}-z\|-\frac{2s}{\alpha -1}\sigma_{k-1}(z).
    \end{aligned}
    \end{equation}
\end{lem}
\begin{proof}
    Let
    \begin{equation}\label{eq:Definition_of_z_k} 
    z_k =x_k +\frac{k-1}{\alpha -1}\Delta x_k ,\quad w_k = z_k+z_{k-1}-x_k-x_{k-1}.
    \end{equation}
    Then
    \begin{equation}\label{eq:Definition_of_z_k-1}
\begin{aligned}
    z_{k-1}&= x_{k-1}+\frac{k-2}{\alpha -1}\Delta x_{k-1}\\
    & = \frac{k+\alpha -2-(k-1)}{\alpha -1} x_{k-1}+\frac{k-2}{\alpha -1}\Delta x_{k-1}\\
    &=\frac{k+\alpha -2}{\alpha -1}x_{k-1}+\frac{k-2}{\alpha -1}\Delta x_{k-1}-\frac{k-1}{\alpha -1}x_{k-1}\\
    &=\frac{k+\alpha -2}{\alpha -1}\left(x_{k-1}+\frac{k-2}{k+\alpha -2}\Delta x_{k-1}\right)-\frac{k-1}{\alpha -1}x_{k-1}\\
    &= \frac{k+\alpha -2}{\alpha -1}\left(y_{k-1}+x_{k-1}-y_{k-1}+\frac{k-2}{k+\alpha -2}\Delta x_{k-1}\right)-\frac{k-1}{\alpha -1}x_{k-1}\\
    &\xlongequal{\text{Definitin of $y_{k-1}$}}\frac{k+\alpha -2}{\alpha -1}\bigg[y_{k-1}+x_{k-1}\\&\qquad -\left(x_{k-1}+\frac{k-2}{k+\alpha -2}\Delta x_{k-1}-\frac{\alpha -3}{k+\alpha -2}r_{k-1}\right)+\frac{k-2}{k+\alpha -2}\Delta x_{k-1}\bigg] \\&\qquad-\frac{k-1}{\alpha -1}x_{k-1}\\
    & = \frac{k+\alpha -2}{\alpha -1}\left(y_{k-1}+\frac{\alpha -3}{k+\alpha -2}r_{k-1}\right)-\frac{k-1}{\alpha -1}x_{k-1}\\
    &=\xi_k y_{k-1}+\frac{\alpha -3}{\alpha -1}r_{k-1}-\frac{k-1}{\alpha -1}x_{k-1}. 
\end{aligned}
    \end{equation}
    and
    \begin{equation} \label{eq:Deltax_k-1-r_k-1}
    \begin{aligned}
    \frac{k-2}{\alpha -1}\Delta x_{k-1}-\frac{\alpha -3}{\alpha -1}r_{k-1}&=\frac{k+\alpha -2}{\alpha -1}\left(\frac{k-2}{k+\alpha -2}\Delta x_{k-1}-\frac{\alpha -3}{k+\alpha -2}r_{k-1}\right)\\
    &=\frac{k+\alpha -2}{\alpha -1}\bigg(\frac{k-2}{k+\alpha -2}\Delta x_{k-1}\\&\qquad +\Big[y_{k-1}-x_{k-1}-\frac{k-2}{k+\alpha -2}\Delta x_{k-1}\Big]\bigg)\\
    &=\frac{k+\alpha -2}{\alpha -1}\left(y_{k-1}-x_{k-1}\right)\\
    &=\frac{k+\alpha -2}{\alpha -1}\left(\Delta x_k+(y_{k-1}-x_k)\right)\\
    &=\xi_k \Big(\Delta x_k +(y_{k-1}-x_k)\Big).
    \end{aligned}
    \end{equation}
    Thus,
    \begin{equation}\label{eq:z_k-z_k-1}
\begin{aligned}
z_k-z_{k-1}&=\Delta x_k+\frac{k-1}{\alpha -1}\Delta x_k -\frac{k-2}{\alpha -1}\Delta x_{k-1}\\
&=\frac{k+\alpha -2}{\alpha -1}\Delta x_k-\frac{k-2}{\alpha -1}\Delta x_{k-1}\\
&\xlongequal{\cref{eq:Deltax_k-1-r_k-1}}\frac{k-2}{\alpha -1}\Delta x_{k-1}-\frac{k+\alpha -2}{\alpha -1}(y_{k-1}-x_k)-\frac{\alpha -3}{\alpha -1}r_{k-1}-\frac{k-2}{\alpha -1}\Delta x_{k-1}\\
&=-\xi_k (y_{k-1}-x_k)-\frac{\alpha -3}{\alpha -1}r_{k-1},\\
\end{aligned}
    \end{equation}
    \begin{equation}\label{eq:z_k+z_k-1}
    \begin{aligned}
    z_k+z_{k-1}&=z_k-z_{k-1}+2z_{k-1}\\
    &\xlongequal{\cref{eq:Definition_of_z_k,eq:z_k-z_k-1}}-\xi_k (y_{k-1}-x_k)-\frac{\alpha -3}{\alpha -1}r_{k-1}\\&\qquad +2\xi_k y_{k-1}+2\frac{\alpha -3}{\alpha -1}r_{k-1}-2\frac{k-1}{\alpha -1}x_{k-1}\\
    &=-\xi_k(y_{k-1}-x_k)+2\xi_k y_{k-1}-2\frac{k-1}{\alpha -1}x_{k-1}+\frac{\alpha -3}{\alpha -1}r_{k-1}.
    \end{aligned}
    \end{equation}
    Using \cref{eq:Definition_of_z_k} and \cref{eq:Definition_of_z_k-1}, and noting
    \begin{equation}\label{eq:lem5.4.5}
    \frac{k -1}{\alpha -1}+\xi_k =\frac{2k+\alpha -3}{\alpha -1}=\frac{\phi_k}{\alpha -1}.
    \end{equation}
    we have
    \begin{equation}\label{eq:computing_w_k}
\begin{aligned}
w_k &= z_k+z_{k-1}-x_k -x_{k-1}\\
&\xlongequal{\cref{eq:Definition_of_z_k}}x_k+\frac{k-1}{\alpha -1}\Delta x_{k}+x_{k-1}+\frac{k-2}{\alpha -1}\Delta x_{k-1}-x_k-x_{k-1}\\
&=\frac{k-1}{\alpha -1}\Delta x_k +\frac{k-2}{\alpha -1}\Delta x_{k-1}\\
&\xlongequal{\cref{eq:Deltax_k-1-r_k-1}}\frac{k-1}{\alpha -1}\Delta x_k +\frac{\alpha -3}{\alpha -1}r_{k-1}+\xi_k \Big(\Delta x_k+(y_{k-1}-x_k)\Big)\\
&=\frac{2k+\alpha -3}{\alpha -1}\Delta x_k+\frac{\alpha -3}{\alpha -1}r_{k-1}+\xi_k \Big(y_{k-1}-x_k\Big)\\
&=\frac{\phi_k}{\alpha -1}\Delta x_k+\frac{\alpha -3}{\alpha -1}r_{k-1}+\xi_k \Big(y_{k-1}-x_k\Big)
\end{aligned}
    \end{equation}
    We now analyze the difference:
    \begin{equation}\label{eq:lem5.4.7}
\begin{aligned}
&2\left\|x_k-z+\frac{k-1}{\alpha -1}\Delta x_k\right\|^2-2\left\|x_{k-1}-z+\frac{k-2}{\alpha -1}\Delta x_{k-1}\right\|^2\\
=\ &2\|z_k-z\|^2-2\|z_{k-1}-z\|^2\\
=\ &2\Big<z_k-z_{k-1},z_k+z_{k-1}-2z\Big>\\
\xlongequal{\cref{eq:z_k+z_k-1}}\ &-2\xi_k \Big<y_{k-1}-x_k,z_{k}+z_{k-1}-2z\Big>-2\frac{\alpha -3}{\alpha -2}\Big<r_{k-1},z_k+z_{k-1}-2z\Big>\\
\xlongequal{\cref{eq:Definition_of_z_k}}\ &-2\xi_k \Big<y_{k-1}-x_k,z_{k}+z_{k-1}-2z\Big>-2\frac{\alpha -3}{\alpha -2}\Big<r_{k-1},x_k+x_{k-1}-2z\Big>\\
&\qquad -2\frac{\alpha -3}{\alpha -1}\Big<r_{k-1},w_k\Big>
\end{aligned}
    \end{equation}
    and
    \begin{equation}\label{eq:lem5.4.8}
\begin{aligned}
&\frac{(\alpha -3)(k-1)^2}{(\alpha -1)^2}\|\Delta x_k\|^2-\frac{(\alpha -3)(k-2)^2}{(\alpha -1)^2}\|\Delta x_{k-1}\|^2\\
\xlongequal{\cref{eq:Definition_of_z_k}}\ & (\alpha -3)\Big(\|z_k-x_k\|^2-\|z_{k-1}-x_{k-1}\|^2\Big)\\
=\ &(\alpha -3)\Big<z_k-z_{k-1}-\Delta x_k,w_k\Big>\\
\xlongequal{\cref{eq:z_k-z_k-1}}\ &(\alpha -3)\Big<-\frac{\alpha -3}{\alpha -1}r_{k-1}-\xi_k\Big(y_{k-1}-x_k\Big)-\Delta x_k,w_k\Big>\\
=\ &-(\alpha -3)\Big<\frac{\alpha -3}{\alpha -1}r_{k-1}+\Delta x_k,w_k\Big>-(\alpha -3)\xi_k\Big<y_{k-1}-x_k,w_k\Big>
\end{aligned}
    \end{equation}
    Thus,
    \begin{equation}\label{eq:lem5.4.9}
\begin{aligned}
&\mathcal E_z(k)-\mathcal E_z(k-1)\\
=\ &\underbrace{-(\alpha -3)\Big\langle  r_{k-1}+\Delta x_k,w_k\Big\rangle-2\frac{\alpha -3}{\alpha -1}\Big\langle  r_{k-1},x_k+x_{k-1}-2z\Big\rangle}_{L_{z,1}(k)}\\
&\quad \underbrace{-2\xi_k \Big\langle y_{k-1}-x_k,z_k+z_{k-1}-2z\Big\rangle-(\alpha -3)\xi_k \Big\langle y_{k-1}-x_k,w_k\Big\rangle}_{L_{z,2}(k)}\\
&\quad +\ \mathcal G_z(k)-\mathcal G_z(k-1).
\end{aligned}
    \end{equation}
    For each part:
    \begin{equation}\label{eq:Definition_of_z_k0}
    \begin{aligned}
    &L_{1,z}(k)+4\frac{\alpha -3}{\alpha -1}\Big<r_{k-1},x_{k-1}-z\Big>\\
    =\ &-(\alpha -3)\Big<r_{k-1}+\Delta x_{k},w_k\Big>-2\frac{\alpha -3}{\alpha -1}\Big<r_{k-1},\Delta x_k\Big>\\
    \xlongequal{\cref{eq:computing_w_k}}\ &-(\alpha -3)\left< r_{k-1}+\Delta x_k,\frac{\phi_k}{\alpha -1}\Delta x_k+\frac{\alpha -3}{\alpha -1}r_{k-1}+\xi_k (y_{k-1}-x_k)\right>\\
    &-2\frac{\alpha -3}{\alpha -1}\Big<r_{k-1},\Delta x_k\Big>\\
    =\ &-(\alpha -3)\Big<r_{k-1}+\Delta x_k,\xi_k (y_{k-1}-x_k)\Big>\\
    &-(\alpha -3)\left<r_{k-1}+\Delta x_k,\frac{\phi_k}{\alpha - 1}\Delta x_k+\frac{\alpha -3}{\alpha -1}r_{k-1}\right>\\
    &-2\frac{\alpha -3}{\alpha -1}\Big<r_{k-1},\Delta x_k\Big>\\
    =\ &-(\alpha -3)\Big<r_{k-1} ,\xi_k (y_{k-1}-x_k)\Big>-(\alpha -3)\Big<\Delta x_k,\xi_k (y_{k-1}-x_k)\Big>\\
    &-(\alpha -3)\left<r_{k-1},\frac{\phi_k}{\alpha -1}\Delta x_k\right>-(\alpha -3)\frac{\alpha -3}{\alpha -1}\|r_{k-1}\|^2\\
    &-(\alpha -3)\frac{\phi_k}{\alpha -1}\|\Delta x_k\|^2-(\alpha -3)\left<\Delta x_k,\frac{\alpha -3}{\alpha -1}r_{k-1}\right>\\
    &-2\frac{\alpha -3}{\alpha -1}\Big<r_{k-1},\Delta x_k\Big>\\
    =\ &-(\alpha -3)\Big<r_{k-1} ,\xi_k (y_{k-1}-x_k)\Big>-(\alpha -3)\Big<\Delta x_k,\xi_k (y_{k-1}-x_k)\Big>\\
    &-(\alpha -3)\frac{\alpha -3}{\alpha -1}\|r_{k-1}\|^2-(\alpha -3)\frac{\phi_k}{\alpha -1}\|\Delta x_k\|^2\\
    &-\frac{\alpha -3}{\alpha -1}(2+\phi_k+(\alpha-3))\Big<r_{k-1},\Delta x_k\Big>\\
    =\ &-\frac{\alpha -3}{\alpha -1}\Big(\phi_k\|\Delta x_k\|^2+(\alpha -3)\|r_{k-1}\|^2\Big)-2(\alpha -3)\xi_k\Big<r_{k-1},\Delta x_k\Big>\\
    &-(\alpha -3)\xi_k\Big<r_{k-1}, y_{k-1}-x_k\Big>-(\alpha -3)\xi_k\Big<\Delta x_k,y_{k-1}-x_k\Big>\\
    %=\ &-\frac{\alpha -3}{\alpha -1}\Big(\phi_k\|\Delta x_k\|^2+(\alpha -3)\|r_{k-1}\|^2\Big)-2(\alpha -3)\xi_k\Big<r_{k-1},\Delta x_k\Big>\\
    %&-2(\alpha -3)\xi_k \Big<r_{k-1},y_{k-1}-x_k\Big>\\&+(\alpha -3)\xi_k\Big<r_{k-1}, y_{k-1}-x_k\Big>-(\alpha -3)\xi_k\Big<\Delta x_k,y_{k-1}-x_k\Big>\\
    =\ &\underbrace{-\frac{\alpha -3}{\alpha -1}\Big(\phi_k\|\Delta x_k\|^2+(\alpha -3)\|r_{k-1}\|^2+2(\alpha -1)\xi_k\Big<r_{k-1},\Delta x_k+(y_{k-1}-x_k)\Big>\Big)}_{\widetilde {L_{z,1}}(k)}\\
    &+\underbrace{(\alpha -3)\xi_k\Big<r_{k-1}, y_{k-1}-x_k\Big>-(\alpha -3)\xi_k\Big<\Delta x_k,y_{k-1}-x_k\Big>}_{\overline{L_{z,1}}(k)}\\
    \end{aligned}
    \end{equation}
    where
    \begin{equation}\label{eq:barL1} 
    \begin{aligned}
    \overline{L_{z,1}}(k)&=(\alpha -3)\xi_k \Big\langle r_{k-1},y_{k-1}-x_k\Big\rangle-(\alpha -3)\xi_k \Big\langle\Delta x_k ,y_{k-1}-x_k\Big\rangle\\
    &=(\alpha -3)\xi_k \Big\langle y_{k-1}-x_k, r_{k-1}-\Delta x_k\Big\rangle.
    \end{aligned}
    \end{equation}
    \begin{equation}\label{eq:widetilde_L}
    \begin{aligned}
    \widetilde {L_{1,z}}(k)&\xlongequal{\xi_k}-\frac{\alpha -3}{\alpha -1}\Bigg(\phi_k\|\Delta x_k\|^2+(\alpha -3)\|r_{k-1}\|^2\\
    &\qquad +2\bigg<r_{k-1},(k+\alpha -2)\Delta x_k\\&\qquad \qquad +(k+\alpha -2)\left(x_{k-1}+\frac{k-2}{k+\alpha -2}\Delta x_{k-1}-\frac{\alpha -3}{k+\alpha -2}r_{k-1}-x_k\right)\bigg>\Bigg)\\
    &=-\frac{\alpha -3}{\alpha -1}\Bigg(\phi_k\|\Delta x_k\|^2+(\alpha -3)\|r_{k-1}\|^2
    \\&\qquad +2\bigg<r_{k-1},(k+\alpha -2)\Delta x_k\\&\qquad \qquad +\Big(-(k+\alpha -2)\Delta x_k+({k-2})\Delta x_{k-1}-({\alpha -3})r_{k-1}\Big)\bigg>\Bigg)\\
    &=-\frac{\alpha -3}{\alpha -1}\Bigg(\phi_k\|\Delta x_k\|^2+(\alpha -3)\|r_{k-1}\|^2\\
    &\qquad  +2\bigg<r_{k-1},({k-2})\Delta x_{k-1}-({\alpha -3})r_{k-1}\bigg>\Bigg)\\
    &=-\frac{\alpha -3}{\alpha -1}\bigg(\phi_k \|\Delta x_k\|^2-(\alpha -3)\|r_{k-1}\|^2+2(k-2)\Big<r_{k-1},\Delta x_{k-1}\Big>\bigg)\\
    &\le -\frac{\alpha -3}{\alpha -1}\bigg(\phi_k\|\Delta x_k\|^2-(\alpha -3)\|r_{k-1}\|^2-2(k-2)\|r_{k-1}\|\|\Delta x_{k-1}\|\bigg)\\
    &\xlongequal{\|r_{k-1}\|=\|\Delta x_{k-1}\|} -\frac{\alpha -3}{\alpha -1}\bigg(\phi_k\|\Delta x_k\|^2-(2k-\alpha -7)\|\Delta x_{k-1}\|^2\bigg)\\
    &\xlongequal{\text{Definition of }\phi_k} -\frac{\alpha -3}{\alpha -1}\bigg(\phi_k\|\Delta x_k\|^2-(\phi_{k-1}-2)\|\Delta x_{k-1}\|^2\bigg)
    \end{aligned}
    \end{equation}
    Combine with \cref{eq:Definition_of_z_k0,eq:barL1,eq:widetilde_L},
  \begin{equation}\label{eq:Definitionofzk2}
   \begin{aligned}
   L_{1,z}(k)&= \widetilde{L_{1,z}}(k)+ \overline{L_{1,z}}(k)-4\frac{\alpha -3}{\alpha -1}\Big<r_{k-1},x_{k-1}-z\Big>\\
   &\le -\frac{\alpha -3}{\alpha -1}\bigg(\phi_k\|\Delta x_k\|^2-(\phi_{k-1}-2)\|\Delta x_{k-1}\|^2\bigg)\\
   &\qquad +\ 4\frac{\alpha -3}{\alpha -1}\|\Delta x_{k-1}\|\|x_{k-1}-z\|+\overline{L_{1,z}}(k)
   \end{aligned}
    \end{equation}
    According to the definition of \(w_k\),
    \begin{equation}\label{eq:Definitionofzk3}
    \begin{aligned}
    &2(z_k+z_{k-1}-2z)+(\alpha -3)w_k\\
    =\ &2(z_k+z_{k-1}-2z)+(\alpha -3)\big((z_k+z_{k-1}-2z)-(x_k+x_{k-1}-2z)\big)\\
    =\ &(\alpha -1)\bigg(-\xi_k (y_{k-1}-x_k)+2\xi_k y_{k-1}-\frac{2(k-1)}{r-1}x_{k-1}+\frac{\alpha -3}{\alpha -1} r_{k-1}-2z\bigg)\\
    & -2(\alpha -3)(x_{k-1}-z)-(\alpha -3)\Delta x_k\\
    =\ &(\alpha -1)\bigg(-\xi_k (y_{k-1}-x_k)+2\xi_k y_{k-1}-\frac{2(k-1)}{r-1}x_{k-1}-2z\bigg)\\
    & -(\alpha -3)(x_{k-1}-z)-(\alpha -3)(\Delta x_k- r_{k-1}).
    \end{aligned}
    \end{equation}
    Therefore, by \cref{lem:mathcalG} and \cref{eq:barL1}
    \begin{equation}\label{eq:Definition_of_z_k4}
\begin{aligned}
L_{2,z}(k)&=-2\Big\langle y_{k-1}-x_k,2(z_k+z_{k-1}-2z)+(\alpha -3)w_k\Big\rangle\\
&=-(\alpha -1)\xi_k \Big\langle y_{k-1}-x_k,-\xi_k(y_{k-1}-x_k)+2\xi_k y_{k-1}-\frac{2(k-1)}{\alpha -1}x_{k-1}-2z\Big\rangle\\
&\quad +2(\alpha -3)\xi_k \Big\langle y_{k-1}-x_k,x_{k-1}-z\Big\rangle+(\alpha -3)\xi_k \Big\langle y_{k-1}-x_k,\Delta x_{k}-r_{k-1}\Big\rangle\\
&\le \mathcal G_z(k-1)-\mathcal G_z(k)-\frac{2s}{\alpha -1}\sigma_{k-1}(z)-\overline{L_{1,z}}(k)
\end{aligned}
    \end{equation}
    Combining \cref{eq:lem5.4.9}, \cref{eq:Definitionofzk2}, and \cref{eq:Definition_of_z_k4}, we have
\begin{equation}
\begin{aligned}
\mathcal E_z(k)-\mathcal E_z(k-1) &= L_{1,z}(k)+L_{2,z}(k)+ \mathcal G_z(k)-\mathcal G_z(k-1)\\
&\le -\frac{\alpha -3}{\alpha -1}\bigg(\phi_k\|\Delta x_k\|^2-(\phi_{k-1}-2)\|\Delta x_{k-1}\|^2\bigg)\\
&\qquad +\ 4\frac{\alpha -3}{\alpha -1}\|\Delta x_{k-1}\|\|x_{k-1}-z\|+\overline{L_{1,z}}(k)\\
&\qquad +\ \mathcal G_z(k-1)-\mathcal G_z(k)-\frac{2s}{\alpha -1}\sigma_{k-1}(z)-\overline{L_{1,z}}(k)\\
&\qquad +\  \mathcal G_z(k)-\mathcal G_z(k-1)\\
&\le -\frac{\alpha -3}{\alpha -1}\bigg(\phi_k\|\Delta x_k\|^2-(\phi_{k-1}-2)\|\Delta x_{k-1}\|^2\bigg)\\
&\qquad +\ 4\frac{\alpha -3}{\alpha -1}\|\Delta x_{k-1}\|\|x_{k-1}-z\|
-\frac{2s}{\alpha -1}\sigma_{k-1}(z)\\
\end{aligned}
\end{equation}\qed 
\end{proof}

{\it Proof of \cref{eq:Lyapunovk-Lyapunov1}}
    	For any \(k > 1\) and \(1< j \le k\), according to the \cref{coro:sigammore}, we know  
    \begin{equation}
    -\sigma_{j-1}(z)\le -\sigma _{k}(z)+\frac1{2s}\Big[\|\Delta x_{j-1}\|^2-\|\Delta x_{k}\|^2\Big]+\frac1{2s}\sum_{l=j}^{k}\left(\left(\frac{l+\alpha -5}{l+\alpha -2}\right)^2-1\right)\|\Delta x_{l-1}\|^2.
    \end{equation} 
    Based on the \cref{lem:Lyapunovestimate}, we have  
    \begin{equation}
    \begin{aligned}
    \mathcal E(j)-\mathcal E(j-1)&\le -\frac{\alpha -3}{\alpha -1}\phi_j\|\Delta x_j\|^2+\frac{\alpha -3}{\alpha -1}(\phi_{j-1}-2)\|\Delta x_{j-1}\|^2\\
    &\quad +\ 4\frac{\alpha -3}{\alpha -1}\|\Delta x_{j-1}\|\|x_{j-1}-z\|-\frac{2s}{\alpha -1}\sigma_{j-1}(z)\\
    &\le -\frac{\alpha -3}{\alpha -1}\phi_j\|\Delta x_j\|^2+\frac{\alpha -3}{\alpha -1}\phi_{j-1}\|\Delta x_{j-1}\|^2-2\frac{\alpha -3}{\alpha -1}\|\Delta x_{j-1}\|^2\\
    &\quad +\ 4\frac{\alpha -3}{\alpha -1}\|\Delta x_{j-1}\|\|x_{j-1}-z\|\\&\quad +\frac{2s}{\alpha -1}\Bigg[-\sigma_{k}(z)+\frac1{2s}\Big(\|\Delta x_{j-1}\|^2-\|\Delta x_{k}\|^2\Big)\\&\quad +\frac1{2s}\sum_{l=j}^{k}\left(\left(\frac{l+\alpha -5}{l+\alpha -2}\right)^2-1\right)\|\Delta x_{l-1}\|^2\Bigg].\\
    \end{aligned}
    \end{equation}
    Let \(j_1 = 2\) and \(j_2 = k\). Then  
    \begin{equation}
    \begin{aligned}
    \mathcal E(k)-\mathcal E(1 )&\le -\frac{\alpha -3}{\alpha -1}\phi_k\|\Delta x_k\|^2+\frac{\alpha -3}{\alpha -1}\phi_{1}\|\Delta x_{1 }\|^2\\&\quad -2\frac{\alpha -3}{\alpha -1}\sum_{j= 2}^k\|\Delta x_{j-1}\|^2+4\frac{\alpha -3}{\alpha -1}\sum_{j=2}^k\|\Delta x_{j-1}\|\|x_{j-1}-z\|\\
    &\quad -\frac{2s}{\alpha -1}k\sigma_k(z)+\frac{1}{\alpha -1}\sum_{j= 2}^k\|\Delta x_{j-1}\|^2\\
    &\quad+ \frac1{\alpha -1}\sum_{j= 2}^k\sum_{l=j}^{k}\left(\left(\frac{l+\alpha -5}{l+\alpha -2}\right)^2-1\right)\|\Delta x_{l-1}\|^2\\
    &\le \frac{\alpha -3}{\alpha -1}\phi_{1}\|\Delta x_{1}\|^2+4\frac{\alpha -3}{\alpha -1}\sum_{j=2}^k \|\Delta x_{j-1}\|\|x_{j-1}-z\|\\
    &\quad -\frac{2s}{\alpha -1}k\sigma_k(z)-2\frac{\alpha -3}{\alpha -1}\sum_{l=2}^k\|\Delta x_{l-1}\|^2\\
    &\quad +\frac{1}{\alpha -1}\sum_{l= 2}^{k}\left(1+(l-1)\left(\left(\frac{l+\alpha -5}{l+\alpha -2}\right)^2-1\right)\right)\|\Delta x_{l-1}\|^2\\
    &\le \frac{\alpha -3}{\alpha -1}\phi_{1}\|\Delta x_{1}\|^2+4\frac{\alpha -3}{\alpha -1}\sum_{j=2}^k \|\Delta x_{j-1}\|\|x_{j-1}-z\|-\frac{2s}{\alpha -1}k\sigma_k(z)\\
    &\quad +\frac{1}{\alpha -1}\sum_{l= 2}^{k}\left(1+(l-1)\left(\left(\frac{l+\alpha -5}{l+\alpha -2}\right)^2-1\right)-2(\alpha -3)\right)\|\Delta x_{l-1}\|^2.
    \end{aligned}
    \end{equation}  
    Let the function  
    \begin{equation} 
    h(l)=1+(l-1)\left(\left(\frac{l+\alpha -5}{l+\alpha -2}\right)^2-1\right)
    \end{equation}
    Then its derivative is  
    \begin{equation}
    \begin{aligned}
    h'(l)% &= \frac{ -3(2\alpha+1)(l+\alpha) + 30\alpha -12 }{(l+\alpha-2)^3}\\
    &=\frac{-3(2\alpha+1)l -6\alpha^2 +27\alpha -12}{(l+\alpha -2)^3} 
    \end{aligned}
    \end{equation}
    Since the numerator is a linear function and the coefficient $-3(2\alpha+1)<0$, we have  
    \begin{equation}\label{eq:HH}
    h'(l)\le h'(2)=\frac{ -6\alpha^2 +15\alpha -18}{(l+\alpha -2)^3}\le 0
    \end{equation}
    The latter inequality holds because $\alpha \ge 3$. Furthermore,  \cref{eq:HH} implies that $h(l)$ is monotonically decreasing on $[2,+\infty )$, and thus  
    $$
    h(l)\le h(2)=\frac{(\alpha -3)^2}{\alpha^2}\le 2(\alpha -3)
    $$
    for any $\alpha \ge 3$.
    It is easy to see that  
    \begin{equation}
    1+(l-1)\left(\left(\frac{l+\alpha -5}{l+\alpha -2}\right)^2-1\right)<2(\alpha -3),  
    \end{equation} 
    { for all $ l \ge 2$}.
    Therefore, we have  
    \begin{equation}
    \mathcal E(k)-\mathcal E(1 )\le \frac{\alpha -3}{\alpha -1}\phi_{1 }\|\Delta x_{1}\|^2+4\frac{\alpha -3}{\alpha -1}\sum_{j=2}^k \|\Delta x_{j-1}\|\|x_{j-1}-z\| -\frac{2s}{\alpha -1}k\sigma_k(z).  
    \end{equation}  The proof is complete.\qed 
\section{Test problem}\label{appendix:test_Problem}
\subsection{Test problems for ODE}\label{section:test_ODE}
\textbf{Quadratic probelm}.  As in \cite{sonntag2024fastgradientflow}, define the following functions:
\[
f_1 : \mathbb{R}^2 \to \mathbb{R}, \quad x = (x_1, x_2)^\top  \mapsto (x_1 - 1)^2 + \frac{1}{2}x_2^2,
\]
\[
f_2 : \mathbb{R}^2 \to \mathbb{R}, \quad x = (x_1, x_2)^\top  \mapsto \frac{1}{2}x_1^2 + (x_2 - 1)^2.
\]
The Pareto solution set for \(\min_{x \in \mathbb{R}^2} (f_1, f_2)\) is:
\[
P = \left\{ x \in \mathbb{R}^2 \,\middle|\, x = 
\begin{bmatrix}
\frac{2\lambda}{1 + \lambda} \\[6pt]
\frac{2(1 - \lambda)}{2 - \lambda}
\end{bmatrix}, \quad \lambda \in [0, 1] \right\}.
\]
Use initial point \(x_0 = (-0.2,- 0.1)^\top \). 

\noindent\textbf{Non-Quadratic problem}. Consider the following two functions, also discussed in \cite{sonntag2024fastgradientflow}:
\[
f_1 : \mathbb{R}^2 \to \mathbb{R}, \quad x = (x_1, x_2)^\top  \mapsto \ln \left( \sum_{j=1}^{4} \exp \left( a_j^\top  x - b_j \right) \right),
\]
\[
f_2 : \mathbb{R}^2 \to \mathbb{R}, \quad x = (x_1, x_2)^\top  \mapsto \ln \left( \sum_{j=1}^{4} \exp \left( a_j^\top  x + b_j \right) \right),
\]
where
\[
\begin{bmatrix}
a_1^\top  \\
a_2^\top  \\
a_3^\top  \\
a_4^\top 
\end{bmatrix}
=
\begin{bmatrix}
10 & 10 \\
10 & -10 \\
-10 & -10 \\
-10 & 10
\end{bmatrix},
\quad 
\begin{bmatrix}
b_1 \\
b_2 \\
b_3 \\
b_4
\end{bmatrix}
=
\begin{bmatrix}
0 \\
-20 \\
0 \\
20
\end{bmatrix}.
\]
and the Pareto set is given by:
\[
P = \left\{ x \in \mathbb{R}^2 \,\middle|\, x = \begin{pmatrix}
-1 + 2\lambda \\
1 - 2\lambda
\end{pmatrix}, \quad \lambda \in [0, 1] \right\}.
\]

\subsection{Test problems for algorithm}
\begin{itemize}
\item[1.] \textbf{JOS1}, quadratic programming  
$$
\begin{aligned}
f_1(x)&=\frac1{n}\sum_{i=1}^m x_i^2,\\
f_2(x)&=\frac1n\sum_{i=1}^m(x_i-2)^2,
\end{aligned}
$$  

\item[2. ]\textbf{FDS}, convex problem  
$$
\begin{aligned}
f_1(x)&=\frac1{n^2}\sum_{i=1}^n i(x_i-i)^4,\\
f_2(x)&=\exp\left(\sum_{i=1}^n\frac{x_i}{n}\right)+\|x\|^2,\\
f_3(x)&=\frac{1}{n(n+1)}\sum_{i=1}^ni(n-i+1)e^{-x_i}
\end{aligned}
$$  

\item[3. ]\textbf{LTY1}, convex problem.  
$$
f_j(x)=\ln\sum_{i=1}^p\exp\left(\langle a_i^j,x\rangle -b_i^j\right)+\frac\delta2\|x\|^2,\quad j=1,2,3
$$  
where $\delta =0.05$, $p=100$, $a_i^j\in \R^n$, and $b_i^j\in \R$ for $1\le i\le p$, with each component uniformly selected from $[-1,1]$.  

 \item[4. ]\textbf{LTY2}, convex problem.  
$$
f_j(x)=\frac12\|A^j x-b^j\|^2+\frac{\delta }{2}\|x\|^2,\quad j=1,2,3
$$  
where $\delta =0.05$, $p=5$, $A^j \in \R^{n\times p}$, and $b^j\in \R^p$, with each component uniformly selected from $[-1,1]$.  

\item[5. ] \textbf{DD1}, non-convex problem  
$$
\begin{aligned}
F_1(x)&=x_1^2+x_2^2+x_3^2+x_4^2+x_5^2\\
F_2(x)&=3x_1+2x_2-\frac{x_3}{3}+0.01(x_4-x_5)^3
\end{aligned}
$$  

\item[6. ] \textbf{KW2}, non-convex problem.  
$$
\begin{aligned}
f_1(x)&=-3(1-x_1)^2\exp(-x_1^2-(x_2+1)^2)\\
&\quad + 10\left(\frac{x_1}{5}-x_1^3-x_2^5\right)\exp(-x_1^2-x_2^2)\\
&\quad +3\exp(-(x_1+2)^2-x_2^2)-0.5(2x_1+x_2),\\
f_2(x)&=-3(1+x_2)^2\exp(-x_2^2-(1-x_1)^2)\\
&\quad +10\left(-\frac{x_2}{5}+x_2^3+x_1^5\right)\exp(-x_1^2-x_2^2)\\
&\quad +3\exp(-(2-x_2)^2-x_1^2),
\end{aligned}
$$

\item[7. ]  \textbf{LTY3}, non-convex problem.  
$$
\begin{aligned}
f_1(x)&=\frac12\left(\sqrt{1+|a_1^\top x|^2}+\sqrt{1+|a_2^\top x|^2}+a_2^\top x\right)+\exp(-|a_2^\top x|^2),\\
f_2(x)&=\frac12\left(\sqrt{1+|a_1^\top x|^2}+\sqrt{1+|a_2^\top x|^2}-a_2^\top x\right)+\exp(-|a_2^\top x|^2),
\end{aligned}
$$  
where $a_1$ and $a_2$ are uniformly selected from $[0,1]^n$.  

\item[8. ]  \textbf{SD}, convex problem\footnote{In \cite{mita2019nonmonotone}, Fukuda et al. provided the case for $n=4$, and here we have generalized it.\label{foot:noteSD}}, $x\in \R^n$.  
$$
\begin{aligned}
f_1(x)&=2x_1+\sqrt 2\sum_{i=2}^{n-1}{x_i}+x_n\\
f_2(x)&=\frac{2}{x_1}+2\sqrt 2\sum_{i=2}^{n-1}\frac{1}{x_i}+\frac{2}{x_n}
\end{aligned}
$$  

\item[9. ]\textbf{TOI4}, convex problem\textsuperscript{\ref{foot:noteSD}}, $x\in \R^n$, $n=2k$. 
$$
\begin{aligned}
f_1(x)&=\sum_{i=1}^kx_i^2+1\\
f_2(x)&=0.5\left(\sum_{i=1}^k(x_{2i-1}-x_{2i})^2\right)+1
\end{aligned}
$$
\end{itemize}
\end{appendices}

%References

\end{document}